\documentclass[12pt]{amsart}

\makeatletter
\usepackage{mathtools}
\usepackage{amsmath, amssymb, amsfonts, amstext, amsthm}
\usepackage{caption,subcaption,float,thmtools}
\usepackage{tikz}
\usetikzlibrary{arrows,positioning,quotes, calc}
\usepackage[colorlinks=true,linkcolor=blue,citecolor=blue,urlcolor=blue,citebordercolor={0 0 1},urlbordercolor={0 0 1},linkbordercolor={0 0 1}]{hyperref} 
\usepackage{cleveref}
\tikzset{
  strata/.style={
    >=latex,
    every node/.style={font=\small,inner sep=1.2pt,align=center},
    every label/.style={font=\small},
  },
  vertex/.style={
    draw,
    circle,
    minimum size=6mm,  
    inner sep=1.5pt,
    font=\small,
  },
  mark/.style={
    circle,
    inner sep=1.5pt,
    fill=black,
  },
  edgelabel/.style={
    midway,
    fill=white,
    inner sep=1pt,
    font=\small,  
  },
  leglabel/.style={
    font=\small,         
  },
}

\def\makeCal#1{\expandafter\newcommand\csname c#1\endcsname{\mathcal{#1}}}
\def\makeBB#1{\expandafter\newcommand\csname b#1\endcsname{\mathbb{#1}}}
\def\makeFrak#1{\expandafter\newcommand\csname f#1\endcsname{\mathfrak{#1}}}
\def\makeRM#1{\expandafter\newcommand\csname r#1\endcsname{\mathrm{#1}}}

\count@=0
\loop
\advance\count@ 1
\edef\y{\@Alph\count@}\expandafter\makeCal\y
\expandafter\makeBB\y
\expandafter\makeFrak\y
\expandafter\makeRM\y
\ifnum\count@<26
\repeat

\theoremstyle{plain}
\newtheorem{thm}{Theorem}[section]
\newtheorem{cor}[thm]{Corollary}
\newtheorem{lem}[thm]{Lemma}

\newtheorem{claim}[thm]{Claim}

\newtheorem{prop}[thm]{Proposition}

\theoremstyle{definition}
\newtheorem{rem}[thm]{Remark}
\newtheorem{defn}[thm]{Definition}

\newtheorem{innercustomthm}{Proposition}
\newenvironment{customprop}[1]
  {\renewcommand\theinnercustomthm{#1}\innercustomthm}
  {\endinnercustomthm}

\DeclareMathOperator{\Eff}{Eff}

\DeclareMathOperator{\Aut}{Aut}

\DeclareMathOperator{\im}{im}
\DeclareMathOperator{\coker}{coker}

\newcommand\ocH{{\overline\cH}}
\newcommand\cHgnm{\cH_{g,n}(\mu)}
\newcommand\ocHgnm{\overline\cH_{g,n}(\mu)}

\newcommand\LG{\mathrm{LG}}
\newcommand\Gr{\mathrm{Gr}}
\newcommand\CH{CH}
\newcommand\RH{RH}
\newcommand\cohH{H}

\makeatletter
\let\@wraptoccontribs\wraptoccontribs
\makeatother

\author{Prabhat Devkota}
\address{Department of Mathematics, Stony Brook University, Stony Brook, NY 11794-3651}
\email{prabhat.devkota@stonybrook.edu}

\author{Samuel Grushevsky}
\address{Department of Mathematics and Simons Center for Geometry and Physics, Stony Brook University, Stony Brook, NY 11794-3651}
\email{sam@math.stonybrook.edu}
\thanks{Research of both authors is supported in part by NSF grant DMS-21-01631.}

\contrib[with an appendix joint with]{Dawei Chen and Martin Möller}
\thanks{Research of Dawei Chen is supported in part by NSF grant DMS-23-01030 and by Simons Travel Support for Mathematicians.}
\address{Department of Mathematics, Boston College, Chestnut Hill, MA 02467, USA}
\email{dawei.chen@bc.edu}

\address{Institut f\"ur Mathematik, Goethe-Universit\"at Frankfurt,
Robert-Mayer-Str. 6-8,
60325 Frankfurt am Main, Germany}
\email{moeller@math.uni-frankfurt.de}
\thanks{Research of Martin M\"oller is supported by the DFG-project MO 1884/2-1 and by the Collaborative Research Centre TRR 326 ``Geometry and Arithmetic of Uniformized Structures''.}

\begin{document}

\title[Non-trivial cohomology of strata of differentials]{Non-trivial and non-tautological cohomology of strata of differentials}
\begin{abstract}
  In this paper we construct various non-trivial and non-tautological cohomology classes on compactified and uncompactified strata of curves with a differential, by using the geometry of the boundary stratification of the moduli space of multi-scale differentials.
\end{abstract}
\maketitle
\section{Introduction}
For $g\ge 0$ and a set of integers $m_1\ge \dots\ge m_n$ summing to~$2g-2$, the  stratum $\cH_{g,n}(\mu)$ is the moduli space of complex curves with marked points $(X,z_1,\dots,z_n)\in\cM_{g,n}$ together with a non-zero meromorphic differential~$\omega$ on~$X$ such that its divisor of zeroes and poles is $\operatorname{div}(\omega)=\sum m_i z_i$. We work with the projectivized stratum, that is we consider~$\omega$ up to scaling by a non-zero complex number, and the points~$z_i$ are numbered and distinct. While one can consider~$\cHgnm$ as a subvariety of~$\cM_{g,n}$, it is often more convenient to consider it abstractly, with various forgetful maps.

In \cite{bcggm} a compactification~$\ocHgnm$ of the strata, called the moduli space of multi-scale differentials, was constructed. It is a projective smooth Deligne-Mumford stack (see \cite{chcomo},\cite{cghms}) with a normal crossing boundary, which is simple normal crossing outside of the horizontal boundary divisor (see \cite{comoza}). The boundary of~$\ocHgnm$ is stratified, with strata being (up to finite correspondences described in detail in \cite{comoza} and reviewed below as necessary) themselves generalized strata of differentials, that is strata of differentials on possibly disconnected curves, satisfying a number of relations of the form that sums of certain sets of residues are equal to zero.

This stratification allows us to obtain some results on the cohomology of the strata of differentials by utilizing and developing techniques recently successfully applied for studying the cohomology of~$\overline\cM_{g,n}$. While our main focus in this paper are the strata of holomorphic differentials, the constructions use the rich geometry of the strata of (generalized) meromorphic differentials that appear in the boundaries of their multi-scale compactifications.

Little is known about the cohomology of strata of differentials beyond the few examples of holomorphic strata in low genus. 
Motivated partially by the recent progress on cohomology of moduli of curves, including \cite{befapa}, \cite{calapawi}, and in particular some ideas from \cite{calapa}, we mostly investigate higher degree cohomology classes on strata of differentials. Our main results are as follows. 

\begin{thm}\label{thm:H3}
For any stratum in genus~$g\ge 3$ (and for $g=2$ provided $n\ge 3$) such that~$m_1\ge 9$, the third cohomology is non-zero: $\cohH^3(\ocHgnm)\ne 0$.

Moreover, for~$n$ and $m_2,\dots,m_n$ fixed, $\dim \cohH^3(\ocHgnm)$ grows at least exponentially in~$m_1$ (or equivalently, in~$g$).
\end{thm}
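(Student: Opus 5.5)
The idea is to imitate the argument of Canning--Larson--Payne for $\cohH^{11}(\overline\cM_{1,11})$ and odd cohomology of $\overline\cM_{g,n}$. The aim is to find a boundary stratum of $\ocHgnm$ whose normalization carries a factor with non-zero odd cohomology, here $\cohH^1$ of a curve, and to push that class forward.

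Since $\ocHgnm$ is a smooth proper Deligne--Mumford stack, $\cohH^3$ is pure of weight~$3$. It therefore suffices to produce a non-zero class in the weight-$3$ graded piece of $\cohH^3$ of some open subset, or dually to find a boundary stratum $D$ of codimension~$1$ and a class in $\cohH^1(\widetilde D)$ whose Gysin pushforward is non-zero.

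\textbf{Choice of divisor.} I would take the two-level boundary divisor $D$ whose top level is a genus-$g-1$ (or a smaller-genus) holomorphic component carrying $z_2,\dots,z_n$. This component is joined by a single edge to a bottom-level genus-$1$ component carrying $z_1$ of order $m_1$ and one pole of order $-m_1-2+\dots$. The genus-$1$ level is a stratum $\cH_{1,k}(m_1,-p,\dots)$ of meromorphic differentials. Its projectivization is a finite cover of an open part of $\cM_{1,k}$, cut out by a torsion condition: $m_1 z_1 - p\,q \sim 0$.

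The cleanest version puts just two points on the bottom level, $z_1$ and one pole of order $m_1$. Then the bottom level is the modular curve of $m_1$-torsion points $z_1-q$ on an elliptic curve that are not of smaller order. This is essentially $X_1(m_1)$ minus cusps, quotiented by the $\bC^*$-scaling, which is already accounted for. The genus of $X_1(N)$ is positive exactly for $N\ge 11$, or $N=11$ and $N\ge 13$, and the genus of the relevant modular curves grows roughly like $N^2$. To get the bound $m_1\ge 9$ and exponential growth, I would instead place several marked points or nodes on the bottom level so as to use modular curves of level $\le m_1$ with more flexibility. Alternatively I would use a rational bottom level with residue conditions. I expect the precise level and divisor choice to be tuned to give the threshold~$9$.

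\textbf{Key steps.}
\begin{enumerate}
\item Identify the normalization of the closure of $D$ as, up to finite correspondence as reviewed from \cite{comoza}, a product $\overline{\cH}^{\rm top}\times\overline{B}$, where $\overline B$ is the compactified bottom-level stratum, a smooth projective modular curve of genus $\ge 1$.
\item Take $\alpha=1\otimes\beta$ with $\beta\in\cohH^1(\overline B)$ and push it forward via the Gysin map $\iota_*:\cohH^1(\widetilde D)\to\cohH^3(\ocHgnm)$.
\item Prove injectivity on this piece by pulling back: $\iota^*\iota_*\alpha=\alpha\cdot c_1(N_D)+(\text{contributions from self-intersection}).$ Here $c_1(N_D)$ is expressed via level-graph normal bundle formulas as $-\xi$-type classes and $\psi$-classes. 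The hope is that $\alpha\cdot c_1(N)$ pairs non-trivially with some class after integrating against a complementary class pulled back from the top level.
\end{enumerate}
Alternatively, and more robustly, I would use the Canning--Larson--Payne criterion. If $\cohH^1$ of all strata of the boundary vanishes except along $D$, then the weight-$3$ part of $\cohH^3$ of the complement of lower-codimension strata maps isomorphically. This means showing $W_3\cohH^3(\ocHgnm)\supseteq$ the kernel of a residue map from $\bigoplus\cohH^1(\widetilde D_i)$ to $\bigoplus\cohH^1(\text{codim 2 strata})$. Hence it would suffice to show $\cohH^1(\overline B)$ is not killed by the restriction to codimension-2 strata. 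This reduces to $\cohH^1$ of the boundary of $\overline B$, which is a finite set of cusps and so has $\cohH^1=0$, while the other factor has $\cohH^0\ne 0$.

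\textbf{Growth and main obstacle.} Exponential growth would come from taking many such divisors, indexed for instance by splittings of $m_1$ among several bottom-level components or by choices of top-level data. Each contributes an independent copy of $\cohH^1$, and the number of these grows exponentially. Alternatively it would come from the genus of higher-level covers of the bottom level.

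The main obstacle is step (3): controlling the weight spectral sequence, that is, verifying that no other boundary stratum cancels the class and that the Gysin map is injective on these summands. This requires understanding which codimension-2 strata meet $D$ and the finite correspondences there, especially at horizontal nodes where the crossing is not simple.
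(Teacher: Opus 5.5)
\paragraph{The divisor you choose cannot reach the stated range.} Your skeleton is the paper's: push forward $\cohH^1$ of a two-level divisor whose bottom level is a genus-one curve, and detect the image by $\iota^*\iota_*$. The divisor you actually write down, however, cannot prove the statement.

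With one top vertex joined by one edge, the bottom level is $\cH_{1,2}(m_1,-m_1)$. This is a union of modular curves $X_1(d)$ with $d\mid m_1$, all of them rational for $m_1\le 10$ and for $m_1=12$. So $m_1\in\{9,10,12\}$ is out of reach.

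Moreover, for fixed $\mu$ this enhanced level graph is unique, so it gives only $O(m_1^2)$ classes. Higher-level modular curves also grow only polynomially. ``Splitting $m_1$ among bottom components'' makes no sense, since $z_1$ sits on a single vertex.

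The missing idea is to join the elliptic bottom vertex by single edges to several top-level vertices, with all marked poles on one of them. The holomorphic top vertices impose the global residue condition. Together with the residue theorem, this makes the bottom level the residueless stratum $\cR_{1,1+k}(m_1,-b_1,\dots,-b_k)$, which is still a curve.

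You then need a genuinely new input rather than ``tuning'': \Cref{prop:genusLT}, proved from the Lee--Tahar Euler characteristics and Lee's count of components. It gives:
\begin{itemize}
\item a positive-genus component of $\cR_{1,3}(m_1,-2,2-m_1)$ precisely for $m_1\ge 9$, using top genera $1$ and $g-2$, which is where the hypothesis on $g,n$ comes from;
\item positive-genus components for the graphs indexed by partitions of $N$ into distinct parts $t_i$, where $m_1=M+2N-2$ and the enhancements are $2t_i-1$.
\end{itemize}
Distinctness of the $t_i$ gives $\Aut(\Gamma)=1$, and the number of such partitions gives the growth. Independence follows because these divisors are pairwise disjoint, so restricting a relation to one $D_\Lambda$ isolates its own term.

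\paragraph{Step (3) is the heart of the argument and is left open.} Your fallback criterion is the wrong one. By the weight spectral sequence, the kernel of the Gysin map $\cohH^1(\widetilde D)\to\cohH^3(\ocH)$ is $\Gr^W_3\cohH^2(\cH)$. Restriction from $\widetilde D$ to codimension-two strata does not detect this kernel. Your reduction to the cusps of $\overline B$ also ignores the strata $\partial B^\top\times\overline B$, on which $1\otimes\beta$ restricts non-trivially.

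The kernel really can contain such classes. In \Cref{prop:H2mer}, for $\cH_{1,3}(a+1,-1,-a)$, the divisor whose bottom level is the modular curve of level $a+1$ has non-zero $\cohH^1$, yet $\cohH^3(\ocH)=0$ because the top level is a point.

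The paper closes the step as follows:
\begin{itemize}
\item Vertical divisors lie in the simple normal crossing locus, so $\iota^*\iota_*\alpha=c_1(N_{D_\Gamma})\cdot\alpha$ with no excess term.
\item $c_\Gamma^*c_1(N_{D_\Gamma})=p_\Gamma^*(\nu_\Gamma^\top+\nu_\Gamma^\bot)$.
\item $\nu_\Gamma^\bot\cdot\beta$ lies in $\cohH^3$ of a curve and so vanishes. By K\"unneth it then suffices that $\nu_\Gamma^\top\ne 0$ in $\cohH^2(B_\Gamma^\top)$.
\item This holds because $\nu_\Gamma^\top$ equals $-\tfrac{\kappa_e}{\ell_\Gamma}\psi_{e^+}$ minus a non-zero effective combination of degeneration divisors, while $\psi_{e^+}$ is nef.
\item To pass from $B_\Gamma$ to $D_\Gamma$ one uses $\Aut(\Gamma)=1$. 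Then $c^\Gamma_\Gamma$ is an isomorphism and there is a generically finite rational map $D_\Gamma\dashrightarrow B_\Gamma$. Koll\'ar's birational invariance of $\cohH^1$ for spaces with quotient singularities then gives $\cohH^1(B_\Gamma)\hookrightarrow\cohH^1(D_\Gamma)$.
\end{itemize}
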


We remark that the above theorem will be  complemented by an ongoing systematic investigation on the first cohomology of strata of differentials in \cite{CGMP-first}.

Tautological classes on strata of differentials were defined in \cite{comoza}. We will use the biggest tautological ring defined there, i.e.~the smallest system of subrings $\RH^*(\ocHgnm)\subset \cohH^*(\ocHgnm)$ for all~$\mu$ that contains the fundamental classes of each irreducible boundary stratum (not just the unions of those, with a fixed enhanced level graph), and is closed under forgetting marked points of order~$m_i=0$ and under the map from generalized strata to the boundary of bigger strata (see \Cref{df:taut} for details). In particular tautological cohomology classes are all of even degree, and we will construct many even degree non-tautological cohomology classes.

\begin{thm}\label{thm:H4}
For any stratum in genus~$g\ge 5$ (and for $g=4$ provided $n\ge 3$) such that~$m_2\ge 9$, there exist non-tautological algebraic degree~$4$ cohomology classes, i.e.~$\RH^4(\ocHgnm)\subsetneq \cohH^{2,2}(\ocHgnm)$.  

Moreover, for~$n$ and $m_2\ge 9,m_3,\dots,m_n$ fixed, the dimension of the space of such classes grows at least exponentially in~$m_1$.
\end{thm}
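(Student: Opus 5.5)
The plan follows the Canning--Larson--Payne strategy for $\overline\cM_{g,n}$: detect cohomology via pullback to a boundary stratum, and use the fact that tautological classes restrict to boundary strata as sums of products of tautological classes on the factors.

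\textbf{The $H^3$ input.} For $\mu$ with $m_1\ge 9$, I would take a two-level boundary stratum $D_\Gamma$ of $\ocHgnm$ whose bottom level contains a genus-one component carrying $z_1$ with a large zero, and enough poles. Up to a finite correspondence, $D_\Gamma$ is a product of generalized strata. The key input is that some generalized stratum of genus-one meromorphic differentials has nonzero $H^{1,0}$ or $H^{3,0}$ in its compactification. The expected source is a modular form of weight $\ge 12$, coming from fibrations over $\overline\cM_{1,k}$ with $k$ large, i.e.\ $H^{11,0}(\overline\cM_{1,11})$. The threshold $9$ presumably enters here: with $m_1\ge 9$, a bottom-level genus-one component with one zero of order $m_1$ plus poles realizes a stratum mapping birationally (or with positive-degree fibers) onto $\overline\cM_{1,k}$ for a suitable $k$. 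Products of such odd classes with classes on the other factor would then give odd classes on $D_\Gamma$. For Theorem~\ref{thm:H4} I would instead use a stratum $D_\Gamma$ having \emph{two} such genus-one pieces, one carrying $z_1$ and one carrying $z_2$; this is where $m_2\ge 9$ and $g\ge 5$ (or $g=4$ with $n\ge 3$) are used. On it I take the class $\alpha\otimes\bar\beta$, or better a $(2,2)$ Künneth component built from $H^{1,0}\otimes H^{0,1}$-type pieces, or from the weight-$3$ pieces used in Theorem~\ref{thm:H3}.

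\textbf{Non-tautologicality.} To prove that a class $\eta$ on $\ocHgnm$ is non-tautological, I would construct a class $\eta\in H^{2,2}(\ocHgnm)$ whose pullback to $\widetilde D_\Gamma$ (a product $X_1\times X_2\times Y$) has a nonzero Künneth component in $H^{odd}(X_1)\otimes H^{odd}(X_2)\otimes H^{0}(Y)$. I would establish, as a lemma, that tautological classes pull back under $\widetilde D_\Gamma\to\ocHgnm$ into the span of Künneth products of tautological, hence even-degree, classes on generalized strata. This is the analogue of the excess-intersection formula for boundary strata, and I would use the description of normal bundles and intersections of boundary strata from \cite{comoza}. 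Granting it, any $\eta$ with such an odd$\otimes$odd component is non-tautological.

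\textbf{Constructing $\eta$ (main obstacle).} Existence of $\eta$ needs surjectivity, or at least a nonvanishing image, of restriction onto the relevant Künneth piece. I would get this dually: push forward a class from $\widetilde D_\Gamma$ along the gluing map and check that its pullback is nonzero by computing the self-intersection via the excess formula. The correction terms involve $\xi$ and level line bundle classes, which are even and tautological, so they do not affect the odd$\otimes$odd component. The remaining concern is other strata mapping to the same image; I would choose $\Gamma$ so that no automorphism or other component interferes, or use weights. The same pushforward argument handles Theorem~\ref{thm:H3}, using $\cohH^3$ classes pushed forward from the $H^{1}$ piece of a boundary divisor.

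\textbf{Exponential growth.} For fixed $m_2,\dots,m_n$, increasing $m_1$ lets the genus-one bottom component carrying $z_1$ admit many poles. This yields generalized strata fibering over $\overline\cM_{1,k}$ with $k$ growing linearly in $m_1$. The dimension of $H^{k,0}(\overline\cM_{1,k})$, built from cusp forms tensored with $H^*$ of configuration spaces, grows exponentially in $k$, and injectivity of the construction above transfers this growth to $\ocHgnm$.
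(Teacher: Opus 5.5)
Your skeleton is the paper's: a two-level vertical divisor $D_\Gamma$ with two genus-one bottom vertices carrying $z_1$ and $z_2$, a class of type $\cohH^{1,0}\otimes\cohH^{0,1}$ on the bottom level pushed forward to $\ocH$, and non-tautologicality detected by the K\"unneth decomposition of its restriction to $D_\Gamma$ (\Cref{cor: tautological_kunneth}). You are also right that the input is odd cohomology of a genus-one generalized stratum. The gap is \emph{which} stratum and why its cohomology is non-zero.

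In this graph each bottom genus-one vertex has one edge to the central top vertex and one edge to an unmarked holomorphic genus-one top vertex. The global residue condition therefore forces it to be the residueless stratum $\cR_{1,3}(m_i,-2,2-m_i)$. This is a \emph{curve}; no generalized stratum fibering over $\overline\cM_{1,k}$ appears. What you need is that this curve has a connected component of positive genus, a generalized modular curve, so that $\cohH^{1,0}\ne 0$. That is \Cref{prop:genusLT}, obtained from the Lee--Tahar Euler characteristic formulas, and it is exactly where $m_2\ge 9$ comes from.

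Your proposed source, weight-$12$ cusp forms via $\cohH^{11,0}(\overline\cM_{1,11})$, cannot replace it, for two reasons. First, those classes sit in degree at least $11$ on the boundary stratum, so their pushforwards are nowhere near $\cohH^4$. Second, realizing them needs a bottom vertex with at least $11$ poles of order $\ge 2$ and unconstrained residues, hence $m_1\ge 22$ and large genus. So the stated thresholds cannot come out of that route.

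Two further steps need repair.

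\textbf{The excess term.} Since $D_\Gamma$ is a divisor, $\iota^*\iota_*\beta=c_1(N_{D_\Gamma})\cdot\beta$. Multiplying by even tautological classes preserves the odd$\otimes$odd type, but it may kill the class, and nonvanishing is the whole point. The component you must detect is $\nu_\Gamma^\top\otimes(\alpha_1\otimes\alpha_2)\in\cohH^2(B_\Gamma^\top)\otimes\cohH^2(B_\Gamma^\bot)$, not something tensored with $\cohH^0$. So you must show that $\nu_\Gamma^\top=-\tfrac{\kappa_e}{\ell_\Gamma}\psi_{e^+}-(\text{effective boundary})$ is non-zero. The paper gets this from \eqref{eq:normal_bundle}, nefness of $\psi_{e^+}$, and the existence of a degeneration of the top level.

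\textbf{The finite correspondence.} You must also show that the class survives the correspondence $c_\Gamma\colon D_\Gamma^s\to D_\Gamma$, $p_\Gamma\colon D_\Gamma^s\to B_\Gamma$. The paper uses three facts:
\begin{itemize}
\item $\Aut(\Gamma)=1$, so $c_\Gamma^\Gamma$ is an isomorphism;
\item $(p_\Gamma^\Gamma)^*$ is injective on $\Gr_2^W\cohH^2$ of the open parts;
\item $\Gr_2^W\cohH^2(D_\Gamma^\circ)$ is the image of $\cohH^2(D_\Gamma)$.
\end{itemize}

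\textbf{Exponential growth.} Your growth argument fails. The space $\cohH^{k,0}(\overline\cM_{1,k})$ is the space of cusp forms of weight $k+1$ for $\mathrm{SL}_2(\bZ)$, whose dimension grows only linearly in $k$. Moreover, these classes live in degree $k$, not $4$. The paper instead counts boundary divisors. It replaces the unmarked genus-one top vertex over $z_1$ by univalent vertices of pairwise distinct genera $t_1>\dots>t_k$ with $\sum t_j=N$ and enhancements $2t_j-1$. This keeps $\Aut(\Gamma)=1$ and a residueless bottom curve with a positive-genus component. There are roughly $\exp(\pi\sqrt{N/3})$ such graphs. Their classes are independent because these divisors are pairwise disjoint, so restricting a linear relation to each $D_\Gamma$ isolates its own term.
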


\begin{thm}\label{thm:H6}
For any stratum in genus~$g\ge 5$ (and for $g=4$ provided $n\ge 3$) such that $m_1\ge 13$, there exist non-tautological algebraic degree 6 cohomology classes, i.e.~$\RH^6(\ocHgnm)\subsetneq \cohH^{3,3}(\ocHgnm)$. 

Moreover, for~$n$ and $m_2,\dots,m_n$ fixed, the dimension of the space of degree 6 non-tautological cohomology classes grows at least exponentially with~$m_1$.
\end{thm}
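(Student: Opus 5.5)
We prove \Cref{thm:H6} by the strategy used for $\overline\cM_{g,n}$ in \cite{calapa}, in the form adapted to multi-scale differentials for \Cref{thm:H4}. The starting point is the odd cohomology produced in \Cref{thm:H3}. The genus-one strata $\ocH_{1,n}(\mu')$ with a zero of order at least $11$ carry non-zero holomorphic $11$-forms coming from cusp forms for $\mathrm{SL}_2(\bZ)$ (the discriminant $\Delta$ of weight $12$). Because of the threshold $m_1\ge 9$ in \Cref{thm:H3}, we take that result to give nonzero $\cohH^3$ of small-genus or generalized strata, with a $(3,0)$ or pure non-Tate Hodge part. The plan is to find a boundary stratum $D\subset \ocHgnm$ of codimension~$3$ whose normalization is, up to a finite correspondence, a product $\ocH_A\times \ocH_B$ of two generalized strata each carrying such an odd class $\alpha_A\in \cohH^3(\ocH_A)$ and $\alpha_B\in \cohH^3(\ocH_B)$. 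We then consider the Künneth class $\alpha_A\otimes\bar\alpha_B$, which lies in $\cohH^{3,3}$ after choosing the Hodge types appropriately. We push it forward to a class $\xi=\iota_*(\alpha_A\otimes \bar\alpha_B)$ of degree $6+6=12$; alternatively we use the correspondence in the other direction to land in degree~$6$. In fact we choose $D$ to be of dimension $6$ in the appropriate relative sense, so that the class $\alpha_A\otimes\alpha_B$ is itself of degree $6$ on the stratum. We then show, by restriction, that $\cohH^6(\ocHgnm)$ contains a non-tautological class.

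The key steps are as follows. \emph{(1) Combinatorial choice of level graph.} We choose a two-level (or horizontal) enhanced level graph splitting the surface into pieces of genera $g_A,g_B$ with $g_A+g_B\le g$. One piece contains the zero $z_1$ with enough order: the condition $m_1\ge 13$ is what allows distributing $m_1$ so that both pieces meet the order hypothesis of \Cref{thm:H3}, up to the level-crossing adjustments forced by the prongs. The remaining zeros $m_2,\dots,m_n$ and the extra genus go into a third piece with trivial contribution. The conditions $g\ge 5$ (or $g=4$, $n\ge 3$) are exactly what is needed to fit the genera. \emph{(2) Nonvanishing of the restriction.} We use that the Hodge structure of the pulled-back class is not Tate, or we use the standard argument: if $\xi$ were tautological, its pullback to the normalization of every boundary stratum would lie in the image of tautological classes. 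By \cite{comoza}, the pullback of tautological classes to a product-type boundary stratum lies in the sum of tensor products of tautological (hence even, Tate) classes on the factors. The Künneth component of type $\cohH^3\otimes\cohH^3$ of any tautological class is therefore zero. It remains to exhibit a class whose pullback to $\ocH_A\times\ocH_B$ has nonzero $\cohH^3\otimes \cohH^3$ component. For this we use the excess intersection formula for $\iota^*\iota_*$ and show that the correction terms (normal bundle $\psi$-type classes, and contributions from automorphisms and other components of the preimage) do not kill the $\alpha_A\otimes\alpha_B$ term. \emph{(3) Algebraicity.} The class $\alpha_A\otimes\bar\alpha_B$, being of Hodge type $(3,3)$ when we pair a $(3,0)$ form with a $(0,3)$ form, is Hodge. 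Algebraicity then follows as in \cite{calapa}: the relevant motives come from the Kuga–Sato or Eichler–Shimura construction, and the product of a motive with its dual contains the algebraic class given by the graph of the correspondence. \emph{(4) Exponential growth.} We vary the splitting of $m_1$ among many admissible graphs, or use that $\dim \cohH^3$ of the factor grows exponentially by \Cref{thm:H3}. Since the $\cohH^{3,0}\otimes\cohH^{0,3}$ components of distinct Künneth products are independent, this yields exponentially many independent non-tautological classes.

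The main obstacle is step~(2): controlling the excess intersection so that $\iota^*$ of the constructed class retains a nonzero odd$\otimes$odd Künneth component. The intended approach is to argue purely by Hodge and Künneth type. The normal bundle contributions multiply by even tautological classes, so they preserve the property ``odd$\otimes$odd'' but could cancel the leading term. We would avoid this by choosing $D$ so that the degree forces the normal-bundle corrections to vanish for dimension reasons, or by choosing $D$ with no self-intersection, so that $\iota^*\iota_*$ reduces to a single term.
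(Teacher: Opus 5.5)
There is a genuine gap, and it starts with the degree count. Gysin pushforward from a codimension-$c$ boundary stratum raises degree by $2c$. So an $\cohH^3\otimes\cohH^3$ class on a codimension-$3$ stratum lands in $\cohH^{12}(\ocH)$, and no ``correspondence in the other direction'' repairs this. A pushforward class of degree $6$ with an odd$\otimes$odd K\"unneth component must come from a class of degree $6-2c\ge 2$ on a stratum of codimension $c\le 2$.

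The Hodge input is also off. The classes built in \Cref{thm:H3} are Gysin images of $\cohH^1$ of curves, hence of types $(2,1)$ and $(1,2)$, never $(3,0)$. The cusp form $\Delta$, which produces $\cohH^{11,0}(\overline\cM_{1,11})$, has nothing to do with the order of a zero.

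The missing idea is to work with $\cohH^1\otimes\cohH^1$ on a three-level, codimension-$2$ stratum $D_\Lambda$ in which two genus-$1$ residueless curves are stacked in a chain:
\begin{itemize}
\item the bottom vertex carries $z_1$ and is $\cR_{1,3}(m_1,-2,2-m_1)$;
\item the pole of order $m_1-2$ forces a zero of order $m_1-4$ on the mid-level genus-$1$ vertex, which is then $\cR_{1,3}(m_1-4,-2,6-m_1)$;
\item the top level has vertices of genera $1$ and $g-4$.
\end{itemize}
Pushing forward $(c_\Lambda)_*(p_\Lambda)^*(1\otimes\alpha_1\otimes\alpha_2)$, with $\alpha_1\in\cohH^{1,0}$ and $\alpha_2\in\cohH^{0,1}$, gives degree $2+4=6$ and type $(3,3)$. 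It is this chain, not a splitting of $m_1$ between two pieces, that yields the threshold $m_1-4\ge 9$ from \Cref{prop:genusLT}. One also needs $\Aut(\Lambda)=1$ and the absence of long edges (so the ghost group is trivial). Then $c_\Lambda^\Lambda$ is an isomorphism and the class is nonzero on $D_\Lambda$.

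Your step (2) also misreads the self-intersection formula. The restriction $\iota^*\iota_*\alpha=c_2(N_{D_\Lambda})\cdot\alpha$ is the entire answer, not a leading term $\alpha$ plus corrections. Forcing the normal-bundle classes to vanish for dimension reasons would make the restriction zero and prove nothing; the remark after \Cref{cor:nontrivialH5} exhibits exactly this failure.

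Your reduction to K\"unneth components via \Cref{cor: tautological_kunneth} is right. The real work is showing that $\nu_1\nu_2$, where $p_\Lambda^*(\nu_1\nu_2)=c_\Lambda^*c_1(N_1)c_1(N_2)$, does not kill $1\otimes\alpha_1\otimes\alpha_2$. The paper isolates the summand $\nu_1^{[0]}\otimes\nu_2^{[-1]}\alpha^{[-1]}\otimes\alpha^{[-2]}$ and proves two non-vanishings:
\begin{itemize}
\item $\nu_1^{[0]}\ne0$, from nefness of $\psi$ together with $\LG_{3,e_1}^\Lambda\ne\emptyset$;
\item $\nu_2^{[-1]}\cdot\alpha^{[-1]}\ne0$, from \Cref{leray_hirsch} and a moving curve in the toric fiber that meets a degeneration lengthening $e_2$.
\end{itemize}

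Two further points. The Kuga--Sato algebraicity step is not needed, since the statement concerns $\cohH^{3,3}$. And independence for the growth bound comes from disjointness of the boundary strata, obtained by replacing univalent genus-$1$ vertices with partitions into distinct genera, not from Hodge type alone.
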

For all of the theorems above, keep in mind that we always order the~$m_i$ in a non-increasing way: $m_1\ge m_2\ge\dots \ge m_n$.
\begin{rem}
Recall that the strata of differentials are not necessarily connected, though most are. The connected components of holomorphic strata were classified in \cite{kozo}, and of the meromorphic strata --- in \cite{boissy}. Our theorems above apply to the union of all connected components of a given stratum, i.e.~we claim that {\em some} connected component of a stratum has non-trivial or non-tautological cohomology. We expect that the results actually hold for all non-hyperelliptic components of the strata, possibly with higher bounds for~$m_1$, but determining this would require redoing \cite{leetahar} for the individual connected components of genus~$1$ residueless strata, and investigations as to which boundary strata lie in which connected component, and we do not pursue this here.
\end{rem}

\begin{rem}
Our results in particular construct non-trivial and non-tautological cohomology for most holomorphic strata, but there remain infinite sequences of holomorphic strata, with all $m_i\le 8$, for which our results do not apply. We do not claim that our results above are optimal, in that it is possible that the respective (non-tautological) cohomology is also non-zero for other strata not satisfying the numerical conditions of the theorems above. Furthermore, our methods also yield exponential growth for the dimensions of cohomology in various other sequences of strata with~$m_1$ and/or~$g$ increasing.

We note that the existence of non-trivial odd degree cohomology implies that the Chow ring $\CH^*(\ocH)$ is uncountable, and we prove this for most holomorphic strata in genera $g\geq 6$, including the cases when all $m_i\le 8$ --- see \Cref{cor:nontrivialH5} and \Cref{cor:uncountable_chow}.
\end{rem}

Our constructions of the above cohomology classes start from the existence of non-trivial first cohomology of most one-dimensional genus~$1$ strata of residueless meromorphic differentials. These generalize the well-known cases of level modular curves, whose genus goes to infinity as the level increases. Given such non-zero classes in~$\cohH^1$ of a residueless genus~$1$ stratum, we realize them on a suitable boundary stratum of~$\ocHgnm$, including for the strictly holomorphic strata, and use this to construct higher degree cohomology classes on~$\ocHgnm$ under inclusion. Clearly this method does not yield any non-trivial cohomology classes on the open strata~$\cHgnm$. Nonetheless, we use the weight spectral sequence to construct non-trivial cohomology classes on open strata in a few cases.

\begin{prop}\label{prop:H2mer}
Let~$\cH$ be the connected component of $\cH_{1,n}(\mu)$ of rotation number~$1$. If $n\ge 3$, and either $m_k=\pm 1$ for some~$k$ and $\mu\not\in \{(1,1,-2),(2,-1,-1)\}$, or $\gcd(m_1,\ldots,m_n)>1$, then the second cohomology~$\cohH^2(\cH)$ contains non-trivial classes of weight~$4$ in its mixed Hodge structure (which in particular do not extend to~$\ocH$), 

Furthermore, $\cohH^2(\cH_{1,3}(a+1,-1,-a))$ contains non-trivial classes of weight 3 for $a=10$ or for $a\ge 12$.
\end{prop}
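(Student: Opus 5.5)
We use the weight spectral sequence (Deligne) for the smooth compactification $\ocH\supset\cH$ with normal crossing boundary $D=\ocH\setminus\cH$. Write $D^{(k)}$ for the normalization of the codimension-$k$ boundary strata, i.e.\ the disjoint union of closures of level graph strata with $k$ edges or levels. The $E_1$ page is $E_1^{-k,m+k}=\cohH^{m-k}(D^{(k)})(-k)$, and it degenerates at $E_2$. Hence
\[
\mathrm{Gr}^W_{m+k}\cohH^m(\cH)=\ker\bigl(\cohH^{m-k}(D^{(k)})\to \cohH^{m-k+2}(D^{(k-1)})\bigr)\big/\operatorname{im}\bigl(\cohH^{m-k-2}(D^{(k+1)})\to\cohH^{m-k}(D^{(k)})\bigr),
\]
with Gysin maps as differentials.

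\textbf{Weight $4$ part of $\cohH^2$.} Take $m=2$, $k=2$. Then $\mathrm{Gr}^W_4\cohH^2(\cH)$ is the kernel of the Gysin map $\cohH^0(D^{(2)})\to\cohH^2(D^{(1)})$, modulo the image of $\cohH^{-2}=0$. The stratum $\cH_{1,n}(\mu)$ has dimension $n-1$, and since $n\ge3$ there are codimension-$2$ strata. So we must exhibit a linear relation $\sum_\Gamma a_\Gamma[D_\Gamma]=0$ among the pushforwards of codimension-$2$ boundary strata in the relevant part of $\cohH^4(\ocH)$. More precisely, the kernel is a relation among their classes pushed into the various boundary divisors: for each divisor $D_i$, the sum over codimension-$2$ strata contained in $D_i$ must vanish in $\cohH^2(D_i)$.

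The plan is to find such relations by restricting to divisors that are themselves one-dimensional after projectivization, or products of such. In genus~$1$ with rotation number~$1$, many boundary divisors are two-level graphs whose levels are genus~$0$ strata. On these the codimension-$2$ strata are points on curves, and Gysin maps to $\cohH^2$ of a curve just record degrees. So we want combinations of points whose degree vanishes on each component of every divisor containing them. The two hypotheses give natural candidates.
\begin{itemize}
\item If $m_k=\pm1$, there are degenerations in which $z_k$ collides with a pole or zero in a fixed way. These produce two distinct codimension-$2$ strata, differing by the ordering of levels or by the prong-matching, which lie on exactly the same divisors with equal multiplicity. Their difference is then in the kernel, and we must check it is not a coboundary, which is automatic since there is no $k=3$ contribution in degree $0$.
\item If $\gcd>1$, the number of prong-matchings produces several components of a stratum that are permuted by the level rotation. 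Distinct components of the same codimension-$2$ stratum again give a difference class.
\end{itemize}
The excluded cases $(1,1,-2)$ and $(2,-1,-1)$ are small enough that the two strata coincide or are identified by automorphisms; we would check these directly.

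\textbf{Weight $3$ part.} Take $k=1$. Then $\mathrm{Gr}^W_3\cohH^2=\ker(\cohH^1(D^{(1)})\to\cohH^3(\ocH))/\operatorname{im}(\cohH^{-1})$, which simplifies to $\ker(\cohH^1(D^{(1)})\to \cohH^3(\ocH))$. For $\mu=(a+1,-1,-a)$ the stratum is $2$-dimensional projectively, and some boundary divisor is (up to finite correspondence) a one-dimensional genus~$1$ residueless stratum. This divisor is the one where $z_2$, the simple pole, bubbles off on a genus~$0$ level together with the residue condition. For the stated values of $a$ that residueless curve has positive genus, by the genus~$1$ residueless computations (\cite{leetahar}) that the paper uses. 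This gives $\cohH^1(D)\ne0$, and since $\ocH$ is a surface we need the Gysin map $\cohH^1(D)\to\cohH^3(\ocH)\cong\cohH^1(\ocH)^\vee$ to be non-injective. It suffices to show $\cohH^1(\ocH)$ is smaller than $\sum_D\cohH^1(D)$. The route is to show $\ocH$ has small $\cohH^1$, e.g.\ $b_1(\ocH)=0$, via a fibration over the modular curve or the forgetful map to $\overline\cM_{1,2}$ whose fibers are rational.

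\textbf{Main obstacle.} The hardest step is the verification of the Gysin-kernel conditions. In the weight-$4$ case this means computing exactly which divisors each candidate codimension-$2$ stratum lies in, with multiplicities coming from prong-matchings, so that the difference really vanishes on each $\cohH^2(D_i)$. In the weight-$3$ case it means controlling $\cohH^1(\ocH)$. For the latter I would first try the fibration argument before attempting an explicit computation.
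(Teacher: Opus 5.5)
\textbf{The gap is in the weight-$4$ part.} Your argument never actually produces an element of $\ker\bigl(\cohH^0(\widetilde D^{[2]})\to\cohH^2(\widetilde D)\bigr)$.

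For $n\ge4$ the boundary divisors have dimension $n-2\ge2$. So a difference of two codimension-$2$ strata lying on the same two divisors is in the kernel only if the strata are homologous in $\cohH^2$ of each of those divisors, and you do not address this. Lying on the same divisors suffices only for $n=3$, where you would need an actual cycle in the dual graph of the boundary, and none is exhibited.

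Your candidate pairs also do not behave as claimed. Strata differing by the order of two levels have different undegenerations: merging the top two levels gives different two-level graphs, so they do not lie on the same pair of divisors. For distinct components with one enhanced level graph, nothing forces them onto the same divisor components, let alone makes them homologous there. As a result, the hypotheses $m_k=\pm1$ and $\gcd>1$ enter only nominally.

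\textbf{The missing idea} is to first produce non-trivial linear relations among boundary divisor classes in $\cohH^2(\ocH)$, i.e.\ non-zero classes in $\Gr^W_2\cohH^1(\cH)$.
\begin{itemize}
\item If $m_k=\pm1$, forgetting $z_k$ is a birational morphism $f\colon\ocH\to\overline\cM_{1,n-1}$. Each collision divisor $D_{\Gamma_i}$ (with $z_k,z_i$ on a rational tail) maps onto a divisor meeting $\cM_{1,n-1}$. Its class is a combination of boundary divisors with non-zero $\delta_{irr}$-coefficient. By \Cref{rem:no_exceptional_genus1} no $f$-exceptional divisor maps into the interior. Pulling back therefore gives $D_{\Gamma_i}+F=\sum_\Gamma a^i_\Gamma D_\Gamma$ with no $D_{\Gamma_j}$ on the right. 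The excluded $\mu=(1,1,-2),(2,-1,-1)$ are cases with only one such $D_{\Gamma_i}$.
\item If $\gcd=d>1$, the paper uses $\overline h\colon\ocH\to\overline\cX_1(d)$ and two cusps $\alpha_1,\alpha_2$ over the nodal cubic, so that $\overline h^*\alpha_1=\overline h^*\alpha_2$.
\end{itemize}

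\textbf{Turning relations into kernel elements.} The paper then cuts one such relation with a further boundary divisor. If you do this, check the Gysin image on every component of $\widetilde D$. Cutting $\sum_jc_jD_j=0$ (with $c_0=0$) by $D_0$ kills the image in $\cohH^2(\widetilde D_0)$. On $\widetilde D_j$, however, it leaves $\pm c_j[D_j\cap D_0]$, which is non-zero whenever $c_j\neq0$ and $D_j\cap D_0\neq\emptyset$ (e.g.\ $D_j=D_h^{irr}$, $D_0=D_{\Gamma_2}$).

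What lands in the kernel automatically is the cup product of two weight-$2$ classes of $\cohH^1(\cH)$. Write the relations attached to $D_{\Gamma_1}$ and $D_{\Gamma_2}$ as $\sum_jc^i_jD_j=0$. The double residue of the cup product along $D_a\cap D_b$ is $c^1_ac^2_b-c^1_bc^2_a$. Along $D_{\Gamma_2}\cap D_h^{irr}$ this equals, up to sign, the non-zero $D_h^{irr}$-coefficient of the first relation. This does not work directly in the $\gcd$ case: classes pulled back from the open modular curve have zero cup product, so a second, independent class is needed there.

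\textbf{The weight-$3$ part} is the paper's argument. Forgetting the simple pole makes $\ocH$ birational to $\overline\cM_{1,2}$, so $\cohH^3(\ocH)=0$ and all of $\cohH^1(\widetilde D)$ lies in the kernel. Be precise about which divisor you use: the one with both poles $z_2,z_3$ on a genus-$0$ top level. Its bottom level $\cH_{1,2}(a+1,-a-1)$ contains $X_1(a+1)$, which gives exactly $a=10$ or $a\ge12$. Colliding $z_2$ with $z_1$ instead gives components of $\cH_{1,2}(a,-a)$, all of which are rational for $a=10,12$.
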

This in particular provides first examples of open strata~$\cH$ with non-trivial second cohomology classes that do not extend to~$\ocH$, in particular with~$\cohH^2$ not generated by the tautological classes~$\lambda$, and~$\psi_i$ associated to simple poles (that is, with~$m_i=-1)$. Note that in \cite[Ex.~2.5]{chentautological} Dawei Chen shows that $\cohH^2(\cH_{g,2g+2}(1,\dots,1,-1,-1))$ contains two linearly independent~$\psi$ classes.

However, we show that one graded piece of second cohomology vanishes for all holomorphic strata.
\begin{prop}\label{prop:H2hol}
  For any strictly holomorphic open stratum~$\cH$, the graded piece $\Gr_4^W\,\cohH^2(\cH)$ is zero.
\end{prop}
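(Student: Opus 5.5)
We will use Deligne's weight spectral sequence for the smooth compactification $\ocH$ of $\cH$ by the normal crossing boundary $D=\ocH\setminus\cH$. For a strictly holomorphic stratum, every point of $\ocH$ is smooth and $\cH$ is the complement of a normal crossing divisor. We therefore have
\[
\Gr_4^W \cohH^2(\cH) \cong E_2^{-2,4} = \ker\bigl(\cohH^0(\tilde D^{(2)})(-2)\to \cohH^2(\tilde D^{(1)})(-1)\bigr),
\]
where $\tilde D^{(k)}$ is the normalization of the codimension $k$ boundary strata. This holds because the spectral sequence degenerates at $E_2$, and $E_1^{-2,4}=\cohH^0(\tilde D^{(2)})$ receives no incoming differential. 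The map $d_1$ is the alternating sum of Gysin pushforwards. On each connected component $Z$ of $\tilde D^{(2)}$, lying in the intersection of two local branches $D_a,D_b$, it sends $1_Z$ to $\pm[Z]_{D_a}\mp[Z]_{D_b}$. Here $[Z]_{D_a}\in\cohH^2(\tilde D_a)$ is the class of the divisor $Z$ in the normalized branch.

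So we must show that any linear combination $\sum c_Z Z$ of codimension-$2$ boundary strata with $\sum c_Z [Z]_{D}=0$ in $\cohH^2(\tilde D)$, for each boundary divisor $D$, has all $c_Z=0$. The key step is to exhibit, for each such $Z$, a test curve $C$ inside one of the branches $\tilde D_a$ containing $Z$. We want $C$ to intersect $Z$ positively and to miss, or meet only with controlled sign, all other codimension-$2$ strata in $\tilde D_a$. Pairing the relation in $\cohH^2(\tilde D_a)$ with $[C]$ then isolates the coefficients.

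The natural test curves come from the description of boundary strata as products of generalized strata. For the holomorphic case we would choose $D_a$ to be a two-level divisor. Its bottom level is holomorphic, or its top level is holomorphic with prescribed zeros. We then move the holomorphic level inside its projectivized stratum in a one-parameter family degenerating to $Z$. A cleaner alternative is to use that $\tilde D_a$ is itself (up to finite covers) a product $\ocH^{\top}\times\ocH^{\bot}$ of multi-scale spaces. In that case $[Z]_{D_a}$ is a pullback of a boundary divisor class of a single factor. We would then argue inductively that boundary divisor classes of generalized strata are linearly independent in $\cohH^2$ modulo the relevant relations.

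In fact the induction suggests restating the goal: Gr$_4^W\cohH^2(\cH)=0$ is equivalent to the injectivity, on the span of codimension-$2$ strata, of the combined restriction to boundary divisors. We would verify this injectivity by a case split on the type of $Z$, namely three-level, two-level with two horizontal nodes, or two-level with one horizontal node. In each case we choose the branch $D_a$ in which $Z$ is a boundary divisor of a \emph{holomorphic or residueless} factor, and use that such factors have projective coarse spaces in which $Z$ has a nonzero class. For instance, the class pairs nontrivially with $\xi^{\dim-1}$, where $\xi$ is the tautological $\cO(-1)$ class, which is ample on holomorphic projectivized strata. Some care is needed because $\xi$ is only nef on the boundary. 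We would use instead $\xi$ restricted to the holomorphic level together with positivity of the top-level contribution.

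The main obstacle is that different strata $Z,Z'$ may lie in the same branch $D_a$ with proportional classes. So one must find, for each $Z$, a branch in which $[Z]$ is independent of all other classes appearing in the relation. I would try first to use the ordering by level structure: take $Z$ maximal (e.g.\ with the largest number of top-level vertices), and work in the branch $D_a$ obtained by undegenerating its bottom level. In that branch, any other codimension-$2$ stratum has a different top-level factor, so a Künneth decomposition of $\cohH^2(\tilde D_a)$ separates $[Z]$ from the others. Holomorphicity is essential here: for meromorphic strata, residue conditions can make top-level factors zero-dimensional and kill the class, matching \Cref{prop:H2mer}.
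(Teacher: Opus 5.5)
Your reduction to Deligne's weight spectral sequence, with $\Gr_4^W\cohH^2(\cH)=E_2^{-2,4}$ the kernel of the Gysin map, matches the paper. However, two steps do not work as written.

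First, you drop the orientation local system. The correct term is $E_1^{-2,4}=\cohH^0(\widetilde D^{[2]},\epsilon^2)$, because $\ocH$ is simple normal crossing only away from the horizontal divisor. This matters for the one-level strata with two horizontal edges, which you mislabel as ``two-level with two horizontal nodes'' (that would be codimension~$3$). In a strictly holomorphic stratum the horizontal divisor $D_h$ is irreducible, so both branches through such a $Z$ are $D_h$ and there is no second divisor to test against. Your formula $1_Z\mapsto\pm[Z]_{D_a}\mp[Z]_{D_b}$ is then ill-defined when monodromy exchanges the branches. Read literally it gives $[Z]_{D_h}-[Z]_{D_h}=0$, which would be a spurious kernel element contradicting the statement. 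The paper disposes of these strata by noting that $\epsilon^2$ is non-trivial on them, so $\cohH^0(\widetilde D_\Lambda,\epsilon^2)=0$.

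Second, and more seriously, the separation step fails. Künneth (applied to $(p_\Gamma)_*(c_\Gamma)^*\beta$ on $B_\Gamma$) only separates degenerations of \emph{different} levels of a divisor $\Gamma$. It never separates $[Z]$ from other degenerations of the \emph{same} level; for that you need linear independence of the boundary divisors of that level's generalized stratum.

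In the branch you pick, obtained by smoothing the lower level passage of $Z$, $[Z]$ comes from the bottom factor $B^\bot_{\Gamma_1}$. Every other degeneration of that bottom level has the same top-level factor as $Z$, contrary to your claim. The bottom factor is a meromorphic generalized stratum, whose boundary divisors are in general linearly dependent (already for $\overline\cM_{0,4}$, and for the genus-$1$ strata in Section~5).

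The missing idea is to choose the branch $D_\Gamma$ in which $Z$ is a (possibly horizontal) degeneration of the \emph{top} level, which is always holomorphic. Künneth then reduces the relation to one among boundary divisors of $B^\top_\Gamma$. What kills that relation is Proposition~A.1, the linear independence of boundary divisors of holomorphic strata. Its proof intersects with Teichm\"uller curves, which meet only $D_h$, and when $B^\top_\Gamma$ has several vertices it uses curves contracted to $\overline\cM_{g,n}$ with induction on the number of top-level vertices.

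For $Z\in\LG_{1,1}$ with the horizontal edge on the bottom level no vertical branch works. There you must pass to $\widetilde D_h\cong\ocH(\mu,-1,-1)/S_2$, where $Z$ is a vertical divisor with holomorphic top level and the same test curves apply. Your $\xi^{\dim-1}$ pairing only shows that individual classes are non-zero, which, as you note yourself, is not enough.
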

The proof of this proposition will use the following statement that was in fact known to the experts, but whose proof is missing in the available literature. In the appendix, joint with Dawei Chen and Martin M\"oller, we give a proof of

\begin{customprop}{A.1}\label{prop:indep}
The irreducible components of the boundary of the multi-scale compactification~$\ocHgnm$ of any holomorphic stratum are linearly independent in $\cohH^2(\ocHgnm)$.
\end{customprop}
We note that this statement trivially fails for meromorphic strata, as already for $(g,n)=(0,4)$ all boundary points of~$\overline\cM_{0,4}$ are equal in cohomology. This independence also fails for higher genus meromorphic strata (see \Cref{section: open_strata} for details).

Furthermore, the proof of \Cref{prop:indep} also yields the extremality of the vertical boundary divisors of holomorphic strata:
\begin{customprop}{A.2}\label{prop:extreme}
Every irreducible component of the vertical boundary of every connected component~$\ocH$ of every holomorphic stratum~$\ocHgnm$ is extremal in the cone of pseudo-effective divisors $\overline{\Eff}^1(\ocH)$.
\end{customprop}
The proof of this proposition will also be given in the appendix.

\subsection*{Structure of the paper.} In \Cref{section:background}, we recall the basic notions on the moduli spaces of multi-scale differentials constructed in \cite{bcggm} and summarize the results from \cite{chcomo, comoza, leetahar} that will be useful throughout the paper. In \Cref{section:nontrivial_H3}, we construct non-trivial cohomology classes in odd degrees and prove \Cref{thm:H3}. In \Cref{section:nontautological_even}, we construct non-tautological even degree cohomology classes and prove \Cref{thm:H4} and \Cref{thm:H6}. In \Cref{section: open_strata}, we switch our focus to the cohomology of the interior. In particular, we recall the mixed Hodge structure on the cohomology of the open strata of differentials described by the weight spectral sequence and use it to prove \Cref{prop:H2mer} and \Cref{prop:H2hol}. In \Cref{proof_prop:genusLT}, we closely examine the explicit expressions for Euler characteristics of the genus~$1$ residueless strata from \cite{leetahar} and prove \Cref{prop:genusLT} that characterizes the signatures for which all the components of the residueless strata are rational. And finally, in \Cref{lin_indep}, written jointly with Dawei Chen and Martin M\"oller, we prove \Cref{prop:indep}, showing the linear independence of the irreducible components of the boundary divisors of holomorphic strata in cohomology, and  \Cref{prop:extreme}, showing the extremality of their vertical boundary divisors in the cone of pseudo-effective divisors.

\subsection*{Acknowledgements} 
We are grateful to Dawei Chen and Martin M\"oller for inspiring discussions around cohomology of strata, and for telling us about the results of their ongoing work on the first cohomology of the strata. We are grateful to Myeongjae Lee and Guillaume Tahar for discussions and explanations regarding the intricacies of residueless genus~$1$ strata. Dawei Chen and Martin M\"oller would like to thank Matteo Costantini and Jonathan Zachhuber for enlightening discussions related to the ideas utilized in the appendix. 

\section{Notation and Background}\label{section:background}
Here we briefly recall the notation for multi-scale differentials and summarize the results from \cite{bcggm,comoza,chcomo} that we will need. We will write~$\cH$ for a fixed connected component of a given stratum~$\cHgnm$. All (co)homology is always considered with rational coefficients.

\subsection*{The multi-scale compactification}
For every boundary stratum of~$\ocH$ there is a corresponding enhanced level graph~$\Gamma^+$, which is a dual graph~$\Gamma$ of a stable curve $X=\cup_{v\in V(\Gamma)} X_v$ with legs of the graph being the zeroes and poles~$z_i$ of the differential, together with a {\em level function} $\ell:V(\Gamma)\twoheadrightarrow \{0,\dots,-L\}$ on the vertices, and with {\em enhancements} $\kappa_e\in \bZ_{\ge 0}$ on each edge~$e\in E(\Gamma)$. On each component of the normalization~$\widetilde X_v$ there is a non-zero meromorphic differential~$\eta_v$ with prescribed zeroes and poles at those marked points~$z_i$ that are contained in~$X_v$, and with a zero (resp.~pole) of order~$\kappa_e-1$ (resp.~$-\kappa_e-1$) at the preimage of the node~$e$ if the corresponding edge goes down (resp.~up) from~$v$. In particular the edges with $\kappa_e=0$ must connect vertices of the same level, where the multi-scale differential must have a simple pole. We will also speak of {\em level passages} $1,\dots, L$ of an enhanced level graph, where $i$'th level passage is the imaginary line between levels~$1-i$ and~$-i$.

In addition, multi-scale differentials must satisfy the global residue condition, which we will review explicitly in the cases we need. To define a boundary stratum of~$\ocH$, one additionally prescribes a prong-matching at every vertical node, i.e.~an order-reversing isomorphism of $\bZ/\kappa_e\bZ$ to itself, where these two copies of $\bZ/\kappa_e \bZ$ are the choices of tangent directions at the two preimages of the node, in local coordinates in which~$\eta$ takes the standard form. 

A multi-scale differential is a tuple $(X,\textbf{z},\Gamma^+,\eta,\sigma)$, where $(X,\textbf{z})\in \overline{\cM}_{g,n}$ is an $n$-pointed stable curve of genus~$g$, $\Gamma^+$~is an enhanced level graph compatible with the dual graph of $(X,\textbf{z})$, $\eta=\{\eta_v\}_{v\in V(\Gamma^+)}$ is a collection of differentials with prescribed zeros and poles on each component~$X_v$ of~$X$ and~$\sigma$ is a choice of prong-matching on each vertical edge of~$\Gamma$. For brevity, we will frequently omit parts of the data and write simply $(X,\textbf{z},\eta)$ or $(X,\eta)$ or $(X,\textbf{z})$. 

The moduli space of multi-scale differentials~$\ocHgnm$ parameterizes equivalence classes of multi-scale differentials, up to the action of {\em level rotation torus}. This is a torus (isogenous to~$(\bC^*)^L$) that acts by simultaneously rescaling the differentials of the same level, and by rotating the prong-matchings.

Given an enhanced level graph~$\Gamma=\Gamma^+$ with enhancements $\{\kappa_e\}_{e\in E(\Gamma)}$, there are~$\prod_e\kappa_e$ choices of prong-matchings on a compatible multi-scale differential. The level rotation group $R_\Gamma\cong\bZ^{L(\Gamma)}\subset \bC^{L(\Gamma)}$ acts by rotation on the prong-matchings, with the level~$i$ generator adding one to prong-matchings on all edges that intersect the $i$'th level passage. The stabilizer of all prong-matchings simultaneously is the {\em twist group} $Tw_\Gamma\subset R_\Gamma$. Not all elements of~$Tw_\Gamma$ can be written as a product of prong rotations that act on one level passage only. Those that can be written in that way form the {\em simple twist group} $Tw_\Gamma^s\subset Tw_\Gamma$. The quotient $K_\Gamma\coloneqq Tw_\Gamma/Tw_\Gamma^s$ is called the group of {\em ghost automorphisms} of a multi-scale differential. As explained in \cite[\S~2.2]{chcomo}, the isotropy group of a multi-scale differential $(X,\eta)$ is an extension of the group of automorphisms~$\Aut(X,\eta)$ by~$K_\Gamma$.

Recall that the codimension in~$\ocH$ of any irreducible component of the boundary stratum with enhanced level graph~$\Gamma^+$ is equal to~$H+L$, where~$L$ is the number of levels below zero, and~$H$ is the number of horizontal edges. In particular, boundary divisors of~$\ocH$ occur for the case $H=1$, $L=0$, called {\em horizontal boundary}, and for the case $H=0$, $L=1$, called {\em vertical} boundary. Following \cite{comoza}, we denote by $\LG_{L,H}$ the set of enhanced level graphs with~$L+1$ levels and~$H$ horizontal edges. To keep notation manageable, we will mostly simply write~$\Gamma$ for enhanced level graphs from now on. Additionally, unless otherwise stated, we will denote by~$D_\Gamma$ a component of the boundary stratum represented by the level graph~$\Gamma$.

Recall from \cite{comoza} that away from the horizontal boundary divisors, the boundary of~$\ocH$ is {\em simple} normal crossing. In fact, for any $\Gamma\in \LG_{L,0}$, for any irreducible component~$D_\Gamma$ of the boundary stratum associated to the level graph~$\Gamma$, let~$D_i$ be the irreducible vertical boundary divisor obtained by smoothing all but $i$'th level passage in~$\Gamma$. Then~$D_\Gamma$ is an irreducible component of $D_1\cap\dots\cap D_L$, and whenever~$D_i$ and~$D_j$ intersect anywhere else within~$\ocH$, for $i<j$,~$D_i$ will still correspond to a degeneration at a higher level than~$D_j$. Furthermore,~$D_\Gamma$ does not intersect any other irreducible component of the stratum with the same enhanced level graph.

\subsection*{Residueless strata}
For any level $i\in \{0,\dots,-L\}$, denote by $\Gamma^{(i)}\subset\Gamma$ the subgraph consisting of all vertices~$v$ at level~$i$, i.e.~with $\ell(v)=i$, together with all (necessarily horizontal) edges connecting them. Then the differentials~$\eta_v$ on all~$\widetilde X_v$ for all $v\in V(\Gamma^{(i)})$ are meromorphic differentials with prescribed orders of zeroes and poles at those~$z_j$ and those preimages of the nodes that lie in~$X_v$, satisfying a number of linear conditions on their residues (opposite residues at the two preimages of a horizontal node, and global residue conditions). Such a subset of a stratum of differentials is called a {\em generalized} stratum of differentials. Within a given stratum~$\cHgnm$ without simple nodes (so all $m_i\ne-1$), the smallest possible generalized stratum is the {\em residueless stratum} $\cR_{g,n}(\mu)\subset\cHgnm$. The connected components of residueless strata were determined by Lee \cite{lee}, while the connected components of generalized strata on connected Riemann surfaces in full generality were determined by Lee and Wong \cite{leewong}. 

We will rely upon the work of Lee and Tahar \cite{leetahar} who determined the orbifold and manifold Euler characteristics of the genus~$1$ residueless strata $\cR_{1,1+n}(a,-b_1,\dots,-b_n)$ (which are complex curves). In particular by directly examining their formula, we prove the following
\begin{prop}\label{prop:genusLT}
For $b_1,\dots,b_n\ge 2$ and $a=b_1+\dots+b_n$, the genus~$1$ residueless stratum $\cR_{1,1+n}(a,-b_1,\dots,-b_n)$ is a possibly disconnected curve all of whose components are rational if and only if it is
$$\cR_{1,2}(a,-a)=\cH_{1,2}(a,-a)\quad \hbox{for\ } a\le 10\ \hbox{or\ } a=12,$$ 
or $(a,-b_1,\dots,-b_n)$ is one of the following: 
\begin{equation}\label{eq:exceptionsLT}
\begin{aligned}
(4,-2,-2),\ \ \ (5,-2,-3),\\ (6,-2,-4),\ (6,-3,-3),\ (6,-2,-2,-2),\\
(7,-2,-5),\ (7,-3,-4),\ (7,-2,-2,-3),\\
(8,-2,-6),\ (8,-3,-5),\ (8,-4,-4),\ (8,-2,-2,-4),\\
(8,-2,-3,-3),\ (8,-2,-2,-2,-2),\ \ (10,-2,-2,-2,-2,-2).
\end{aligned}
\end{equation}
\end{prop}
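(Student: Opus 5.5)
A component of the residueless stratum, after passing to the coarse space and compactifying by adding finitely many cusp points, is a smooth projective curve. We determine its genus by Riemann--Hurwitz/Euler characteristic bookkeeping, using the formulas of Lee and Tahar \cite{leetahar}. For each connected component $C$ of $\cR_{1,1+n}(a,-b_1,\dots,-b_n)$ (projectivized, so a curve), \cite{leetahar} gives:
\begin{itemize}
\item the orbifold Euler characteristic $\chi^{orb}(C)$;
\item the manifold (topological) Euler characteristic $\chi(C)$ of the coarse space;
\item equivalently, the number of cusps, i.e.\ boundary points of the closure in the multi-scale compactification.
\end{itemize}
Then $2-2g(\overline C)=\chi(C)+\#\{\text{cusps of }C\}$. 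So rationality of all components amounts to the inequality $\chi(C)+c(C)\le 2$ becoming an equality, which we must verify or refute.

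Rather than tracking components separately, the plan is to work with the whole stratum. Set $\chi_{tot}=\sum_C \chi(C)$ and $c_{tot}=\sum_C c(C)$. Then
$$\sum_C \bigl(2-2g(\overline C)\bigr)=\chi_{tot}+c_{tot},$$
so all components are rational iff $\chi_{tot}+c_{tot}=2N$, where $N$ is the number of components. The quantity $N$ comes from Lee's classification \cite{lee} of components of residueless strata: rotation number components, with the hyperelliptic-type splitting for small cases. We then write $\chi_{tot}+c_{tot}-2N$ explicitly as a function of $(a;b_1,\dots,b_n)$. The ingredients are:
\begin{itemize}
\item $\chi^{orb}$ is a sum over divisors of $a$ of terms involving Jordan-totient type functions (as for modular curves), times a polynomial in the $b_i$;
\item orbifold points of order $2$ and $3$ contribute corrections;
\item cusps are counted by two-level graphs, namely a torus at level $0$ with poles attached to rational components below, or the reverse.
\end{itemize}

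The key step is an asymptotic lower bound. We show that the total genus $\sum_C g(\overline C)$ is bounded below by a quantity like $\chi^{orb}$-growth minus cusp count. This should be of order $a^2\prod(b_i-1)$ minus lower-order terms, hence positive once $a$ exceeds an explicit threshold (roughly $a\ge 13$ for $n=1$). For fixed $a$, the number of signatures with $b_i\ge 2$ is finite, and $n\le a/2$. The comparison would go as follows:
\begin{itemize}
\item For $n=1$, the stratum $\cH_{1,2}(a,-a)$ is a union of modular curves $X_1(d)$-type quotients, with components indexed by rotation number $d\mid a$. Known genus formulas show all are rational exactly for $a\le 10$ or $a=12$, the familiar list where $X_1(N)$ has genus $0$.
\item For $n\ge 2$, the orbifold Euler characteristic grows at least like $a\cdot\prod b_i$, while cusps grow more slowly (polynomially in $n$ with smaller degree). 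A bound like $g\ge \tfrac{1}{12}|\chi^{orb}|-\tfrac12 c-\dots$ then forces positive genus once $a\ge 11$, or once some $b_i$ is large.
\end{itemize}

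The remaining finitely many signatures (with $a\le 10$, or $a$ slightly larger for many small $b_i$) are checked by direct evaluation of the formula. This produces exactly the exceptions in \eqref{eq:exceptionsLT}, including the sporadic $(10,-2,-2,-2,-2,-2)$. The main obstacle is twofold. First, we need the uniform inequality for $n\ge2$, where the Lee--Tahar formula involves several arithmetic sums and sign cancellations; we would first try crude bounds on each divisor sum by its leading term. Second, we must handle the component count $N$ correctly, since disconnectedness via rotation number can make a total-genus-zero test misleading unless done per component. To address the latter, we will apply the genus computation to each rotation-number component separately, using the per-component formulas in \cite{leetahar}.
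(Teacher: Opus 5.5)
\textbf{Verdict: the plan is the paper's, but the one step with real content is missing.} You use Lee--Tahar's Euler characteristic $\chi(\overline\cR)$ of the whole compactified stratum, Lee's count $N$ of its components, the identity $\chi(\overline\cR)=2N-2\sum_C g(\overline C)$, and the genus-zero list for $X_1(N)$ when $n=1$. What you only assert is the explicit inequality that confines the exceptions to a finite, checkable list. Your heuristic for it is not accurate as stated.

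For $p\ge 3$ poles, $\chi(\cR)=\frac{p!B}{48}(A-a^2-4a-4)+\varepsilon$, with $A=\sum b_i^2$ and $B=\prod(b_i-1)$; there are no divisor sums in this main term. The cusp term $\frac12 p!B+\sum_t T_t$ carries the same factor $p!B$. Bounding each $\gcd$ by its first argument, the natural ``crude'' bound, gives $\sum_t T_t\le p!aB/4$. When all but one $b_i$ equal $2$, this is of the same order as $|\chi(\cR)|$, so cusps do not automatically lose. The real comparison is between $a^2-A=2\sum_{i<j}b_ib_j$ and $12a+24$, and it is worst at $(2,\dots,2,a-2p+2)$. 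With $c=a-2p$ the bound there is $\frac{p!(c+1)}{48}\bigl(20+20p-4p^2+(12-4p)c\bigr)$, plus $\varepsilon+\varepsilon_0\le 3$ when $p=3$. This disposes of $p\ge 6$ and leaves finite ranges for $p=4,5$. For $p=3$, however, it equals $\frac{11}{2}B>0$ along the entire family $(a,-2,-2,-(a-4))$.

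There, and for $p=2$, you need genuine arithmetic input. One option is $\sum_{x=1}^{N-1}\gcd(x,N)\le 2(N-1)\sqrt N$, which the paper uses for $p=2$ to reduce to $(b_1-1)(b_2-1)<176$. Along the $p=3$ family the gcd sums become Pillai functions of $a-4$, $a-2$, $a$. These are of size $a^{1+\epsilon}$, against $|\chi(\cR)|\sim\frac32 a^2$.

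Incidentally, the paper's own display takes $A=4p+(a-2p)^2$ instead of $4(p-1)+(a-2p+2)^2$, so its coefficient $8-4p$ should read $12-4p$. Its $p=3$ reduction therefore needs exactly this repair. Its check ranges must also grow from $c\le 5$ to $c\le 8$ for $p=4$, and from $c\le 1$ to $c\le 2$ for $p=5$. Either way, the residual check is far from ``$a\le 10$ or slightly larger'': for $p=2$ it runs up to $a=178$.

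\textbf{Your ``second obstacle'' is not one, and its remedy is unavailable.} Genera are non-negative, so $\chi(\overline\cR)<2N$ holds iff some component has positive genus. The whole-stratum test is therefore exactly equivalent to the componentwise statement, and it is all the paper uses. Per-component Euler characteristics are not in \cite{leetahar}, whose formulas are for the union of all components. The paper's introduction notes that obtaining them would require redoing that work.

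What the finite check does need is the correct $N$, case by case, from \cite{lee}. This includes hyperelliptic components, and a second non-hyperelliptic component of rotation number $1$ for $(6,-3,-3)$ that is missing there. For instance, $(10,-2,-8)$, $(12,-2,-10)$ and $(9,-3,-6)$ all have $\chi(\overline\cR)>0$. They are ruled out only because $\chi(\overline\cR)<2N$.

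Finally, your description of the cusps as compact-type two-level graphs is off. As the paper notes, residueless genus-$1$ strata degenerate only to non-compact type.
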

By inspection, we note that the list above shows that for $a\le 8$ all connected components of all one-dimensional residueless genus~$1$ strata with a unique zero of this order~$a$ are rational curves, while for any $a\ge 9$ almost all (with one exception for $a=9$ and $a=12$ and two exceptions for $a=10$) residueless strata have a connected component that is a curve of higher genus. This explains the numerics in our constructions of non-zero and non-tautological cohomology classes.

We will prove \Cref{prop:genusLT} in \Cref{proof_prop:genusLT} by a numerical argument bounding the explicit expressions for Euler characteristics of the strata obtained in \cite{leetahar}. 

\subsection*{Generalized (product) strata}
Given an enhanced level graph~$\Gamma^+$, for each level~$i$, consider the product $B_\Gamma^{[i]}$ of the {\em unprojectivized} generalized strata of differentials at all vertices $v\in V(\Gamma^{(i)})$, subject to all the residue conditions, and up to the action of~$\bC^*$ that scales all differentials on these components at once. If $i=0$ or~$-L$, we will also use the notation $B_\Gamma^\top$ or $B_\Gamma^\bot$, respectively, to denote $B_\Gamma^{[i]}$. Denoting $B_\Gamma\coloneqq \prod_{i=0}^{-L} B_\Gamma^{[i]}$, \cite[Prop.~4.4]{comoza} provides for each irreducible component~$D_\Gamma$ of the boundary stratum of~$\ocH$ with enhanced dual graph~$\Gamma^+$ the existence of a proper stack~$D_\Gamma^s$ (called the simple boundary stratum), and two finite morphisms
\begin{equation}\label{eq:cp}
c_{D_\Gamma}:D_\Gamma^s\rightarrow D_\Gamma\qquad\hbox{and}\qquad p_{D_\Gamma}:D_\Gamma^s\rightarrow B_\Gamma
\end{equation}
such that $\frac{\deg(p_{D_\Gamma})}{\deg(c_{D_\Gamma})}=\frac{K_\Gamma}{|\Aut(\Gamma)|\cdot \ell_\Gamma}$. 

The notation used in \cite{comoza} simplified~$p_{D_\Gamma}$ and~$c_{D_\Gamma}$ to~$p_\Gamma$ and~$c_\Gamma$, and we will mostly follow this convention except when it is essential to stress the dependence on which irreducible component of the boundary stratum with a given enhanced level graph is considered. Additionally, we will also use the notation~$p_\Gamma^{[i]}$ (resp.~$p_{D_\Gamma}^{[i]}$) to denote the composition of~$p_\Gamma$ (resp.~$p_{D_\Gamma}$) with the projection of $B_\Gamma = \prod_iB_\Gamma^{[i]}$ onto~$i^{\text{th}}$ factor.

Moreover, \cite[diagram after Prop.~4.4]{comoza} shows that over the interior $D_\Gamma^\circ\subset D_\Gamma$ (we avoid the~$U$ notation from that paper) the open part~$D_\Gamma^{s,\circ}$ admits a finite map~$q_\Gamma$ to a smooth Deligne-Mumford stack~$U_\Gamma^s$ with {\em finite} \'etale Galois covers $p_\Gamma^\Gamma:U_\Gamma^s\rightarrow B_\Gamma^\circ$ and $c_\Gamma^\Gamma:U_\Gamma^s\rightarrow D_\Gamma^\circ$. The degrees of these covers are $K_\Gamma/[R_\Gamma:Tw_\Gamma]$ (i.e.~the number of equivalence classes of prong-matchings) and $[Tw_\Gamma:Tw_\Gamma^s]\cdot |\Aut(\Gamma)|$, respectively. Thus on the open stratum
\begin{equation}\label{eq:cpp}
   c_\Gamma|_{D_\Gamma^{s,\circ}}=c_\Gamma^\Gamma\circ q_\Gamma;\qquad p_\Gamma|_{D_\Gamma^{s,\circ}}=p_\Gamma^\Gamma\circ q_\Gamma,
\end{equation}
so that in particular $\deg(p_\Gamma)/\deg(c_\Gamma)=\deg(p_\Gamma^\Gamma)/\deg(c_\Gamma^\Gamma)$, as given in \cite[eq.~(31)]{comoza}.

For any level~$i$ of~$\Gamma^+$, there is a morphism $\overline f_i:B_\Gamma^{[i]}\rightarrow \prod_{v\in V(\Gamma^{(i)})}\ocH_v$ from the level~$i$ generalized stratum to the product of (as always in our notation, projectivized) strata corresponding to the vertices $v\in V(\Gamma^{(i)})$. Denote 
$$(B_\Gamma^{[i]})^\circ\coloneqq\overline f_i^{-1}\left(\prod_{v\in V(\Gamma^{(i)})}\cH_v\right);\qquad Y\coloneqq \overline f_i((B_\Gamma^{[i]})^\circ)\subset \prod_{v\in V(\Gamma^{(i)})}\cH_v,$$ 
and denote by $f_i:(B_\Gamma^{[i]})^\circ\twoheadrightarrow Y$ the restriction of~$\overline{f}_i$.
\begin{claim}
    \textbf{}Every fiber of~$f_i$ is irreducible and is finitely covered by a proper toric variety.
\end{claim}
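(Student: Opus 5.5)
We will identify the fiber of $f_i$ over a point of $Y$ explicitly.

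Fix a point $y=((X_v,\eta_v))_{v\in V(\Gamma^{(i)})}\in Y$. Each projectivized differential $[\eta_v]\in\cH_v$ lifts to an unprojectivized differential $\eta_v$, unique up to a scalar $t_v\in\bC^*$. A point of $(B_\Gamma^{[i]})^\circ$ over $y$ is therefore a tuple $(t_v\eta_v)_v$ such that the rescaled differentials satisfy the linear residue conditions defining the generalized stratum $B_\Gamma^{[i]}$. These conditions are: opposite residues at the two preimages of each horizontal node, and the global residue conditions. The whole tuple is taken modulo the single overall $\bC^*$. The curves $X_v$ and marked points are fixed, so the residues $r_{v,p}:=\operatorname{Res}_p\eta_v$ are fixed complex numbers. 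Every residue condition is then a linear equation $\sum_{v} t_v\,(\sum_{p\in S_v} r_{v,p})=0$ in the variables $t_v$.

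So the fiber is $(L\setminus\{\text{some } t_v=0\})/\bC^*$, where $L\subset\bC^{V(\Gamma^{(i)})}$ is the linear subspace cut out by these equations. More precisely, the fiber is the intersection of the projective linear space $\bP(L)$ with the open torus $(\bC^*)^{V}/\bC^*\subset\bP^{|V|-1}$. This intersection is nonempty because $y\in Y$. It is an open subset of the irreducible variety $\bP(L)$, and so it is irreducible. This settles the first assertion, modulo the stack and orbifold structure: the fiber may be a quotient of the above by a finite group, coming from automorphisms and from choices of the lifts. Irreducibility is preserved under finite quotients.

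For the second assertion, note that $\bP(L)\cap T$, with $T$ the dense torus of $\bP^{|V|-1}$, is a \emph{hyperplane-arrangement complement}, not a torus in general. So "proper toric variety" must be understood after passing to the partial compactification that $B_\Gamma^{[i]}$ itself provides: the fiber of $\overline f_i$ restricted over $y$. Its points include degenerations in which some $t_v\to 0$ or $\infty$ relative to others. Such points are multi-scale differentials on the level-$i$ part where the vertices split into further levels (the dichotomy of the level rotation torus). Our plan is therefore the following.

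(1) Show that the fiber of $f_i$, with its closure in $B_\Gamma^{[i]}$, coincides with the closure of the orbit of the torus $T'=(\bC^*)^{V}/\bC^*$ acting by rescaling, restricted to the residue-compatible locus.

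(2) Identify this closure, up to the finite map from prong-matching choices (as in $c_\Gamma,p_\Gamma$ of \eqref{eq:cp}), with the closure of $\bP(L)\cap T$ stratified by ordered partitions of $V$. This closure is finitely covered by the toric variety whose fan is given by these level orderings, namely a permutohedral-type variety associated to $L$.

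The main obstacle is step (2). One must check that the boundary of the fiber is exactly governed by level orderings compatible with the residue conditions, and that the prong-matching data contributes only a finite cover. The first thing to try is a comparison with the local toric description of $\ocH$ near boundary strata from \cite{bcggm}, where the level parameters $t_i$ serve as toric coordinates.
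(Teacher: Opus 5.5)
Your route is different from the paper's. You compute the fiber directly in the rescaling parameters $t_v$, whereas the paper imports a toric structure from elsewhere. As written, your proposal has two genuine gaps.

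\emph{Irreducibility.} Because $(B_\Gamma^{[i]})^\circ$ is the full preimage of $\prod_v\cH_v$, the fiber of $f_i$ over $y$ is already the whole compact fiber of $\overline f_i$. It contains every point where level $i$ splits into sublevels without changing the dual graph, and none of these lies in $\bP(L)\cap T$. So irreducibility of $\bP(L)\cap T$ proves nothing until you know it is dense in the fiber, which is your step (1) and is not proved. Likewise, ``nonempty because $y\in Y$'' is unjustified, since $y$ may be the image of degenerate points only.

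This density can genuinely fail over special $y$. Take level-$i$ vertices $v_1,\dots,v_4$, each joined by one edge to each of three upper components $Z_1,Z_2,Z_3$ without marked poles. Let the residue vectors $r_j\in\bC^3$ of $\eta_{v_j}$ at these edges satisfy $r_2\in\bC^*r_1$ and $r_4\in\bC^*r_3$, with $r_1,r_3$ independent. Then $\bP(L)\cap T\cong\bC^*$. Now put $v_1,v_2$ on the upper sublevel, with $t_1:t_2$ forced by $t_1r_1+t_2r_2=0$, and $v_3,v_4$ on a lower sublevel. This satisfies all global residue conditions for every ratio $t_3:t_4$: the lower condition is just the residue theorem, because $v_1$ connects $Z_1,Z_2,Z_3$. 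This gives a second $\bC^*$ in the same fiber, meeting $\overline{\bP(L)\cap T}$ in a single point. So this line of argument can at best give irreducibility of a general fiber, which is also all that the paper's proof actually treats.

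\emph{Toricity.} The toric assertion is the real content of the claim, and you leave it open. Moreover, step (2) cannot work as formulated. The toric variety of level orderings of $V(\Gamma^{(i)})$ has dimension $|V(\Gamma^{(i)})|-1$, while the fiber has dimension $\dim\bP(L)$, so the former cannot finitely cover the latter. Rather, $\overline{\bP(L)\cap T}$ is a subvariety of it: a wonderful-type compactification of a hyperplane-arrangement complement.

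Such a closure is a torus-orbit closure only when $\bP(L)\cap T$ is a coset of a subtorus. That means the residue conditions must be binomial on the fiber, so that the subtorus of rescalings respecting them acts with a dense orbit. Otherwise the closure need not even be abstractly toric. For instance, a single condition $\sum_{j=1}^4t_jR_j=0$ with all $R_j\neq0$ gives $\bP^2$ blown up at the six double points of four general lines, whose automorphism group contains no torus.

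The paper does not analyse $\bP(L)$ at all. Its steps are:
\begin{itemize}
\item It takes from \cite[Prop.~3.3]{chcomo} that a general fiber of $D_{\Gamma_0}\to\pi(D_{\Gamma_0})$, for a terminal level structure $\Gamma_0$ on the same dual graph, is finitely covered by a proper toric variety for a torus isogenous to $T^p/T^{np}$.
\item It realizes a fiber of $f_i$ as the torus-invariant piece $B_\Lambda\cap F_{\Gamma_0}$, where $\Lambda$ is the maximal dual-graph-preserving degeneration of all levels other than $i$.
\item It uses injectivity of $Y\to\prod_v\cM_{g(v),n(v)}$ to identify fibers of $f_i$ with fibers of the forgetful map.
\end{itemize}
To finish along your lines, you would have to prove the binomiality (dense-orbit) statement for the fibers in question. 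Your proposal contains no argument for it, and the four-term example shows it is not automatic.
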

\begin{proof}
    In \cite[\S 3]{chcomo}, the authors proved that each component of a fiber of $\pi:\ocH\rightarrow \overline{\cM}_{g,n}$ is finitely covered by a proper toric variety. As discussed in \cite[\S 3.2]{chcomo}, the set of level structures on the underlying dual graph of~$\Gamma$ forms a directed graph, with arrows given by undegeneration. The irreducible components of the fiber over a generic point of~$\pi(D_\Gamma)$ correspond to the terminal elements in this directed graph, i.e.~the level structures on a given dual graph that have minimal possible number of levels. Let~$\Gamma_0$ be one such terminal element such that the generic fiber of $D_\Gamma\rightarrow \pi(D_\Gamma)$ is contained in~$D_{\Gamma_0}$. Then \cite[Prop.~3.3]{chcomo} states that a general fiber of $D_{\Gamma_0}\rightarrow \pi(D_{\Gamma_0})$ is finitely covered by a proper toric variety~$\widetilde{F}_{\Gamma_0}$ for a torus~$T_{\Gamma_0}$ isogenous to~$T^p/T^{np}$. Here $T^p\subset(\bC^*)^{V(\Gamma_0}$ is a torus that acts by rescaling the differentials on each vertex of~$\Gamma_0$ while respecting the global residue condition, whereas $T^{np}\cong (\bC^*)^{L(\Gamma_0)+1}$ is a torus that acts by rescaling the differentials on each level of~$\Gamma_0$. In fact,~$\widetilde{F}_{\Gamma_0}$ is constructed by mimicking the construction of~$D_{\Gamma_0}^s$ in \cite[\S 4]{comoza}, with all objects restricted to a fiber of~$\pi$. Thus~$\widetilde{F}_{\Gamma_0}$ also covers the corresponding fiber~$F_{\Gamma_0}$ of the natural morphism~$B_{\Gamma_0}\rightarrow \pi(D_{\Gamma_0})$ obtained by forgetting the differentials and clutching the corresponding edges of~$\Gamma_0$.

    Since~$\Gamma$ is obtained from~$\Gamma_0$ via a degeneration that preserves the underlying dual graph, a general fiber of~$f_i$ can then be covered by a torus-invariant subvariety of~$\widetilde{F}_{\Gamma_0}$. Denote by~$\Lambda$ the level graph obtained by a maximal degeneration of all but level~$i$ of~$\Gamma$, while keeping the underlying dual graph fixed. Then the torus-invariant subvariety $B_\Lambda\cap F_{\Gamma_0}$ is a fiber of~$f_i$. Recalling that the composition 
    $$Y\rightarrow \prod_{v\in\Gamma^{(i)}}\cH_v\rightarrow \prod_{v\in\Gamma^{(i)}} M_{g(v),n(v)}$$ 
    is injective proves our claim.
\end{proof}

\begin{rem}
    The action of the torus $(\bC^*)^{V(\Gamma^{(i)})}$ by rescaling the differen\-tials on each vertex at level~$i$ of~$\Gamma$ does not respect the global residue condition (GRC). Nonetheless, since GRC is a collection of additive conditions on the residues of the poles, there is a subtorus $T_i^p\subset (\bC^*)^{V(\Gamma^{(i)})}$ that respects the GRC and thus preserves the fiber of~$f_i$. Since~$B_\Gamma^{[i]}$ only remembers the differentials on the vertices of level~$i$ up to an action of~$\bC^*$ that rescales all  differentials simultaneously, the proper toric variety covering a fiber of~$f_i$ is a toric variety for a torus isogenous to $T^p_i/\bC^*$.  This toric variety can be constructed explicitly by following the steps of \cite[\S 3.2]{chcomo}.
\end{rem}

In fact the torus-invariant Weil divisors on each fiber of~$f_i$, being restrictions of divisor classes in~$B_\Gamma$, are~$\bQ$-Cartier. So the covering toric variety can be taken (after normalization if necessary) to be a simplicial toric variety of some fixed combinatorial type. In particular, all fibers of~$f_i$ have the same rational cohomology (see \cite[Thm.~12.3.12]{cls}), so we denote the rational homology fiber by~$F_i$. The following Leray-Hirsch type result will be useful later:
\begin{lem}[Leray-Hirsch]\label{leray_hirsch}
    The cohomology $\cohH^*((B_\Gamma^{[i]})^\circ)$ is isomorphic, as a vector space, to $\cohH^*(Y)\otimes \cohH^*(F_i)$.
\end{lem}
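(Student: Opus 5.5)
The plan is to apply the classical Leray--Hirsch theorem to $f_i:(B_\Gamma^{[i]})^\circ\to Y$. That theorem needs two inputs: the family should be locally trivial in cohomology (a fibration in the rational homology sense, with fiber $F_i$), and there should be global classes on the total space whose restrictions to each fiber span $\cohH^*(F_i)$. The fiber cohomology is generated by divisor classes, and these come from the boundary of the ambient space $B_\Gamma^{[i]}$, so we should be able to produce such classes explicitly.

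\textbf{Step 1: the fibers.} By the Claim, every fiber of $f_i$ is irreducible and finitely covered by a proper toric variety. As noted above, this toric variety is simplicial of a fixed combinatorial type, with torus isogenous to $T_i^p/\bC^*$. Rational cohomology of a finite quotient is the invariant part of the cohomology of the cover. So each fiber has the rational cohomology of a simplicial projective toric variety. By \cite[Thm.~12.3.12]{cls}, this cohomology is generated as a ring by the classes of the torus-invariant divisors, and it is concentrated in even degrees.

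\textbf{Step 2: the global classes.} The torus-invariant divisors on a fiber are cut out by the boundary divisors of $B_\Gamma^{[i]}$ that meet $(B_\Gamma^{[i]})^\circ$. These are the degenerations at level $i$ that keep every vertex differential inside $\prod_v\cH_v$, that is, the changes of level structure on a fixed dual graph. Each such boundary divisor $D$ gives a class $[D]\in\cohH^2(B_\Gamma^{[i]})$. Its restriction to $(B_\Gamma^{[i]})^\circ$ then restricts on each fiber to the corresponding invariant divisor class, which is $\bQ$-Cartier as remarked above.

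We pick monomials in these classes whose fiber restrictions form a basis of $\cohH^*(F_i)$. Because the combinatorial type of the fiber is fixed, one choice of monomials works for every fiber simultaneously. This gives the surjectivity of restriction to the fibers.

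\textbf{Step 3: topological local triviality.} This is the main obstacle. The map $f_i$ is not obviously a fiber bundle, and $Y$ need not be smooth. I would first try to show that $f_i$ is the restriction of the proper map $\overline f_i$ over the locus $\prod_v\cH_v$, so that $f_i$ is proper. Properness holds because the fibers are proper and the preimage is taken in the proper stack $B_\Gamma^{[i]}$. Next, the combinatorial type of the fiber is constant over $Y$, because the level graph data are locally constant there. Using the stratification by this type, $f_i$ should be a topologically locally trivial family, locally over $Y$ a quotient of a toric variety bundle.

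If this direct route fails, I would fall back on the decomposition theorem or on the Leray spectral sequence for the proper map $f_i$. In that approach, the higher direct images $R^qf_{i*}\bQ$ are locally constant with stalks $\cohH^q(F_i)$, since all fibers have the same cohomology. They are in fact constant sheaves, since they are spanned by the restrictions of the global classes from Step 2. The spectral sequence then degenerates by the standard Leray--Hirsch argument: the global classes split the edge map. This yields the isomorphism $\cohH^*((B_\Gamma^{[i]})^\circ)\cong\cohH^*(Y)\otimes\cohH^*(F_i)$ of vector spaces, given by $\alpha\otimes\beta\mapsto f_i^*\alpha\cdot\tilde\beta$, where $\tilde\beta$ is the chosen lift of $\beta$.
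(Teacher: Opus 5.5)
Your fallback in Step 3 is essentially the paper's proof. The paper runs the Leray spectral sequence for the proper map $f_i$ and argues that $R^q(f_i)_*\bQ$ is a trivial local system, because the fiber cohomology is spanned by restrictions of torus-invariant (boundary) classes defined on all of $(B_\Gamma^{[i]})^\circ$; it then gets degeneration at $E_2$ from the standard Leray--Hirsch argument, explicitly without claiming that $f_i$ is a fiber bundle, so your attempted topological local triviality is not needed. One small point: local constancy of $R^q(f_i)_*\bQ$ comes from proper base change (stalks $H^q(F_y)$) together with the global classes surjecting onto every $H^q(F_y)$, not from equal Betti numbers alone, and your Step 2 does supply those global classes.
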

\begin{proof}
    While we don't claim that~$f_i$ is a fiber bundle or a Serre fibration, we can nonetheless mimic the proof of Leray-Hirsch Theorem using the spectral sequence $E_2^{p,q}=\cohH^p(Y,R^q(f_i)_*\bQ)\implies \cohH^{p+q}((B_\Gamma^{[i]})^\circ,\bQ)$. Since~$f_i$ is proper, $R^q(f_i)_*\bQ$ is a local system with fiber~$\cohH^q(F_i)$ for each~$q$, which is non-trivial only for~$q$ even. We claim that $R^q(f_i)_*\bQ$ is the trivial local system. This is clear because the cohomology of a simplicial toric variety is generated by torus-invariant subvarieties, which, in our context, are all given by restriction of subvarieties of~$(B_\Gamma^{[i]})^\circ$. Indeed, the torus-invariant subvarieties in each fiber are in one-to-one correspondence with the degenerations of~$\Gamma^{(i)}$ that keep the underlying dual graph fixed. Now one can follow the usual proof of Leray-Hirsch Theorem (e.g.~\cite[Thms.~5.9 and 5.10]{mccleary_spectralsequences})
to see that the spectral sequence degenerates immediately at $E_2$~page.
\end{proof}

\subsection*{The normal bundle to the boundary strata}
We now recall from \cite[\S~7]{comoza} the formula for the normal bundle to (any irreducible component of) a vertical boundary divisor $D_\Gamma\subset\ocH$, for $\Gamma\in \LG_{1,0}$. Suppose~$e$ is an edge of~$\Gamma$ and~$\kappa_e$ is the enhancement on it. Denoting by~$\psi_{e^\pm}$ the $\psi$-classes associated to the half edges~$e^{\pm}$ that constitute the edge~$e$, the normal bundle $N_{\Gamma}\coloneqq N_{D_\Gamma}$ of~$D_\Gamma$ in~$\ocH$ is given by \cite[eq.~(62)]{comoza}:
\begin{equation}\label{eq:normal_bundle}
    c_1(N_\Gamma) = -\frac{\kappa_e}{\ell_{\Gamma}}(\psi_{e^+}+\psi_{e^-})-\frac{1}{\ell_\Gamma}\sum_{\Delta\in \LG_{2,e}^\Gamma}\sum_{D_\Delta}\ell_{\Delta,a_{\Delta,\Gamma}}D_\Delta\quad\hbox{in\ }CH^1(D_\Gamma).
\end{equation}
Here~$\LG_{2,e}^\Gamma$ is the collection of enhanced level graphs with 3 levels obtained via a degeneration of~$\Gamma$ such that the edge~$e$ becomes long (i.e.~goes from level~$0$ to level~$-2$), $a_{\Delta,\Gamma}\in \{1,2\}$ is the index such that the undegeneration of $a_{\Delta,\Gamma}$'th level passage of~$\Delta$ is not equal to~$\Gamma$, and $\ell_{\Delta,a_{\Delta,\Gamma}}$ is the lcm of the enhancements of the edges crossing $a_{\Delta,\Gamma}$'th level passage. We use the convention that~$e^+$ is the half edge on the higher level and~$e^{-}$ is that on the lower level.

Denote by $\LG_{2,e}^{\Gamma,[i]}\subset \LG_{2,e}^\Gamma$ the subset of level graphs with 3 levels that are obtained by degeneration of the level~$i$ of~$\Gamma$. Then each $\Lambda\in \LG_{2,e}^{\Gamma,[i]}$ defines a divisor in~$B_\Gamma^{[i]}$ (and subsequently in~$B_\Gamma$), which we also denote by~$D_\Lambda$ by abuse of notation, such that $p_\Gamma^*(D_\Lambda)$ and $c_\Gamma^*(D_\Lambda)$ differ by multiplication by a positive constant (see \cite[Prop. 4.7]{comoza}). Thus there exists a divisor class $$\nu_\Gamma\coloneqq -\frac{\kappa_e}{\ell_{\Gamma}}(\psi_{e^+}+\psi_{e^-})-\frac{1}{\ell_\Gamma}\sum_{\Delta\in \LG_{2,e}^\Gamma}\widehat\ell_{\Delta,a_{\Delta,\Gamma}}D_\Delta\in \CH^1(B_\Gamma)$$ such that $p_\Gamma^*\nu_\Gamma=c_\Gamma^*c_1(N_\Gamma)$ (as mentioned above, the coefficients $\ell_{\Delta,a_{\Delta,\Gamma}}$ and $\widehat\ell_{\Delta,a_{\Delta,\Gamma}}$ are related by \cite[Prop. 4.7]{comoza}). Additionally, the classes 
$$\begin{aligned}\nu_\Gamma^{\top}&\coloneqq -\frac{\kappa_e}{\ell_{\Gamma}}\psi_{e^+}-\frac{1}{\ell_\Gamma}\sum_{\Delta\in \LG_{2,e}^{\Gamma,[0]}}\widehat\ell_{\Delta,a_{\Delta,\Gamma}}D_\Delta\quad\hbox{and}\\ \nu_\Gamma^{\bot}&\coloneqq -\frac{\kappa_e}{\ell_{\Gamma}}\psi_{e^-}-\frac{1}{\ell_\Gamma}\sum_{\Delta\in \LG_{2,e}^{\Gamma,[-1]}}\widehat\ell_{\Delta,a_{\Delta,\Gamma}}D_\Delta\end{aligned}$$ 
are pullbacks from~$B_\Gamma^{\top}$ and~$B_\Gamma^\bot$, respectively.

This normal bundle formula generalizes to the case $D_\Lambda\subset D_\Gamma$, where $\Gamma\in \LG_{L,0},\ \Lambda\in \LG_{L+1,0},$ and~$\Lambda$ is a degeneration of~$\Gamma$ that introduces at least one extra edge~$e$. Indeed, if undegeneration of the $i$'th level passage of~$\Lambda$ gives~$\Gamma$, it follows that  the class $N_{\Lambda/\Gamma}=N_{D_\Lambda/D_\Gamma}$ in~$\CH^1(D_\Lambda)$ is given by:
\begin{equation}\label{eq:general_normal_bundle}
    c_1(N_{\Lambda/\Gamma})=-\frac{\kappa_e}{\ell_{\Lambda,-i+1}}(\psi_{e^+}+\psi_{e^-})-\frac{1}{\ell_{\Lambda,-i+1}}\sum_{\Delta\in \LG_{L+2,e}^\Lambda}\sum_{D_\Delta}\ell_{\Delta,a_{\Delta,\Lambda}}D_\Delta\,.
\end{equation}
Here~$\ell_{\Lambda,i}$ is the $\operatorname{lcm}$ of the enhancements on the edges of~$\Lambda$ that cross the $i$'th level passage, $\LG_{L+2,e}^\Lambda$ is the set of all level graphs with~$L+2$ levels such that the edge~$e$ becomes longer, $a_{\Delta,\Lambda}$ is such that the undegeneration of~$\Delta$ that smoothens the $a_{\Delta,\Lambda}$'th level passage is not equal to~$\Gamma$, and $\ell_{\Delta,a_{\Delta,\Lambda}}$ is the lcm of the enhancements of the edges crossing $a_{\Delta,\Gamma}$'th level passage. Analogously, $p_\Gamma^*\nu_{\Lambda/\Gamma}=c_\Gamma^*c_1(N_{\Lambda/\Gamma})$ for some tautological divisor class $\nu_{\Lambda/\Gamma}=\nu_{\Lambda/\Gamma}^{[i]}+\nu_{\Lambda/\Gamma}^{[i-1]}$.

While \cite[Thm. 7.1, Prop. 7.5]{comoza} gives a reformulation of the normal bundle formula that is also applicable to the case where the degeneration of~$\Gamma$ to~$\Lambda$ keeps the underlying dual graph fixed (so introduces no new edge), we will not need that for our purposes.

\subsection*{The tautological ring of the strata}
We finally recall from \cite{comoza} the definition of the (biggest version) of the tautological ring of the strata.
\begin{defn}\label{df:taut}
The tautological rings of strata of differentials are the smallest collection of $\bQ$-algebras $R^*(\ocHgnm)\subset \CH^*(\ocHgnm)$, for all $g,n,\mu$ which:
\begin{itemize}
    \item contain the~$\psi$ classes for each marked point,
    \item is closed under the pushforward via the map forgetting a marked point of order~$0$,
    \item is closed under the map $(c_{D_\Gamma})_*(p_{D_\Gamma})^*$ for each $\Gamma^+\in \LG_{L,H}$ and each irreducible component~$D_\Gamma$ of the boundary with this enhanced level graph.
\end{itemize}
\end{defn}
Dawei Chen \cite{chentautological} showed that~$\RH^*(\cHgnm)$ is generated by the Hodge class~$\lambda$ and the $\psi$-classes of the points corresponding to simple poles. In particular, all tautological classes on a non-meromorphic stratum~$\cHgnm$ are proportional to some power of~$\lambda$. Moreover, in~\cite{chennonvarying} he showed that the class of a certain ample line bundle vanishes on any (strictly) meromorphic stratum, which in particular implies that all meromorphic strata are affine.

The cohomological tautological ring $\RH^*(\ocHgnm)\subset \cohH^*(\ocHgnm)$ is then defined as the image of~$R^*(\ocHgnm)$ under the cycle class morphism $\CH^*(\ocHgnm)\rightarrow \cohH^*(\ocHgnm)$.

It was proven in \cite{Devkotacohomology} that the Chow (as well as cohomology) ring of the strata of multi-scale differentials in genus~$0$ is isomorphic to the tautological ring and is generated by the boundary divisors.
 
Let us also recall Theorem 1.5 of \cite{comoza}. Note that the theorem is stated there for the smaller versions of tautological rings where in the third bullet point in \Cref{df:taut}, only the union of all irreducible components~$D_\Gamma$ of the stratum with a given enhanced level graph $\Gamma^+$ is taken. However, the proof also works for the larger version of tautological ring, yielding the following.
\begin{thm}[{\cite[Thm.~1.5]{comoza}}]\label{thm: fin_gen_taut_ring}
    For each $\mu,g,n$, a finite set of additive generators of~$R^*(\ocHgnm)$ is given by the classes
    \[(\iota_{D_\Gamma})_*(c_{D_\Gamma})_*\left(\prod_{j=0}^{-L}(p_{D_\Gamma}^{[j]})^*\alpha_j\right),\]
    where~$D_\Gamma$ run over the set of all irreducible components of all boundary strata,~$\alpha_j$ is a monomial in $\psi$-classes supported on level~$j$ of the graph~$\Gamma$, and $\iota_{D_\Gamma}:D_\Gamma\hookrightarrow\ocHgnm$ is the inclusion.
\end{thm}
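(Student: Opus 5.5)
The plan is to follow the proof of \cite[Thm.~1.5]{comoza} step by step. At each point where that proof uses the union of all irreducible components with a fixed enhanced level graph, we instead work with a single component~$D_\Gamma$. The argument runs by induction on the codimension of the boundary stratum, together with a study of the operations that generate~$R^*(\ocHgnm)$.

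First, let~$S$ be the $\bQ$-span of the classes $(\iota_{D_\Gamma})_*(c_{D_\Gamma})_*\bigl(\prod_j(p_{D_\Gamma}^{[j]})^*\alpha_j\bigr)$. This is a finite set: there are finitely many enhanced level graphs, and each has finitely many irreducible components. Monomials in~$\psi$ of degree above the dimension vanish, so only finitely many $\alpha_j$ occur. We must show $S=R^*(\ocHgnm)$.

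Each generator lies in~$R^*$, because the~$\psi$-monomials on the generalized strata at each level are tautological, and the class is then obtained by applying $(c_{D_\Gamma})_*(p_{D_\Gamma})^*$. So it suffices to show that the system of spans~$S$ (taken over all generalized strata) contains the~$\psi$-classes and is closed under the three operations of \Cref{df:taut}.

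\textbf{Step 1: products.} Closure of~$S$ under multiplication reduces to computing $D_{\Gamma}\cdot D_{\Gamma'}$ and $\psi_i\cdot D_\Gamma$. The term $\psi_i\cdot D_\Gamma$ is easy: $\psi_i$ pulls back under~$c_\Gamma$ and~$p_\Gamma$ to the $\psi$-class on the level containing~$z_i$. For $D_\Gamma\cdot D_{\Gamma'}$ we use the excess intersection formula. The transversal part consists of the irreducible components of the common degenerations. Since the boundary is simple normal crossing away from the horizontal divisors, and a component~$D_\Gamma$ meets no other component with the same graph, each component of $D_\Gamma\cap D_{\Gamma'}$ is itself a single component~$D_\Delta$. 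This is precisely where the component-wise version differs from the version in \cite{comoza}, and the normal-crossing statement recalled above is what makes it work. The self-intersection part uses the normal bundle formulas \eqref{eq:normal_bundle} and \eqref{eq:general_normal_bundle}. These express~$c_1(N)$ through $\psi_{e^\pm}$ and individual components~$D_\Delta$, and the identity $p^*\nu=c^*c_1(N)$ writes the result as a pullback of level-wise classes from~$B_\Gamma$.

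\textbf{Step 2: pushforward.} Closure under $(c_{D_\Gamma})_*(p_{D_\Gamma})^*$ applied to generators on~$B_\Gamma$ is checked as follows. A generator on a level stratum~$B_\Gamma^{[i]}$ is itself supported on a boundary component of that generalized stratum. Pushing it into~$\ocHgnm$ gives a component~$D_\Delta$ of a further degeneration~$\Delta$, obtained by inserting the level graph into level~$i$. This identification of the clutched components is again component-wise, using the commutative diagrams for $c$ and~$p$ from \cite[Prop.~4.4]{comoza}. Closure under forgetting points of order~$0$ follows as in \cite{comoza}, since the forgetful map sends components to components.

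I expect the main obstacle to be the bookkeeping in Step 1. One must check that the excess intersection of a single component with itself, or with a component of another graph, never needs classes that only make sense for unions. Concretely, the coefficients $\widehat\ell$ must be attached to individual $D_\Delta$. The normal bundle formula as stated already has this property, since it sums over individual~$D_\Delta$.
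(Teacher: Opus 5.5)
Your proposal is correct and takes essentially the same route as the paper. The paper gives no independent proof; it only remarks that the argument of \cite[Thm.~1.5]{comoza} goes through verbatim once unions of components with a fixed enhanced level graph are replaced by individual irreducible components $D_\Gamma$, and your outline (the span contains the $\psi$-classes, is closed under products via excess intersection and the component-wise normal bundle formula, under clutching, and under forgetful pushforward, then minimality) is exactly that adaptation.
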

As a corollary, we have the following result, which is essentially contained in \cite{comoza}:
\begin{cor}\label{cor: tautological_kunneth}
    For any $\alpha\in \RH^*(\ocHgnm)$, any $\Gamma\in \LG_{L,0}$, and any irreducible boundary stratum~$D_\Gamma$ in~$\ocH$, the K\"unneth decomposition of $(p_{D_\Gamma})_*(c_{D_\Gamma})^*(\iota_{D_\Gamma})^*\alpha$  on~$B_\Gamma$ consists of only tautological classes.
\end{cor}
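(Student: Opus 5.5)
By linearity it suffices to treat $\alpha$ equal to one of the additive generators of \Cref{thm: fin_gen_taut_ring}, namely
\[
\alpha=(\iota_{D_{\Gamma'}})_*(c_{D_{\Gamma'}})_*\Bigl(\prod_j (p_{D_{\Gamma'}}^{[j]})^*\alpha'_j\Bigr),
\]
where the $\alpha'_j$ are $\psi$-monomials on the levels of~$\Gamma'$. So the task is to compute the pullback $(c_{D_\Gamma})^*(\iota_{D_\Gamma})^*\alpha$ to $D^s_\Gamma$ and then push it forward along~$p_{D_\Gamma}$. The tool is the excess intersection formula for the two boundary strata $D_\Gamma$ and $D_{\Gamma'}$, exactly as it is used in the proof of \cite[Thm.~1.5]{comoza} to show closure of $R^*$ under products.

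\textbf{Step 1: decompose the intersection.} Away from the horizontal boundary the boundary of $\ocH$ is simple normal crossing. Hence $D_\Gamma\cap D_{\Gamma'}$ is a union of irreducible components $D_\Delta$, and each $\Delta$ is a common degeneration of $\Gamma$ and $\Gamma'$; it is obtained by merging, or ``shuffling'', their level passages. For each such component we get
\[
(\iota_{D_\Gamma})^*(\iota_{D_{\Gamma'}})_*(\cdots)=\sum_{D_\Delta} (\iota_{D_\Delta\subset D_\Gamma})_*\Bigl(c_{\mathrm{top}}(\text{excess bundle})\cdot(\text{restriction of the }\psi\text{-monomial})\Bigr).
\]
The excess bundle is a sum of the normal bundles $N_{D_i}$ of the common vertical divisors. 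Horizontal components of $\Gamma'$ cannot create excess, since $\Gamma$ has no horizontal edges. By \eqref{eq:normal_bundle} and \eqref{eq:general_normal_bundle}, each such $c_1$ pulls back along $p$ to a class $\nu$ that is a sum of a term pulled back from one level and a term pulled back from an adjacent level. Each term is a combination of $\psi$-classes and boundary divisors $D_\Lambda$ of the generalized strata $B^{[i]}$.

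\textbf{Step 2: pass to $B_\Gamma$.} After pulling back along $c_{D_\Gamma}$ and pushing forward along $p_{D_\Gamma}$, the inclusion $D_\Delta\hookrightarrow D_\Gamma$ corresponds, up to the positive constants of \cite[Prop.~4.7]{comoza}, to a product of inclusions of boundary strata $D_{\Delta^{[i]}}\subset B_\Gamma^{[i]}$. Here $\Delta^{[i]}$ is the part of $\Delta$ lying between the $i$'th level passages of $\Gamma$. The resulting class on $B_\Gamma=\prod_i B^{[i]}_\Gamma$ is therefore a sum of products $\prod_i \beta_i$. Each $\beta_i$ is a pushforward from a boundary stratum of $B^{[i]}_\Gamma$ of a polynomial in $\psi$-classes and boundary divisors. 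Such a $\beta_i$ is tautological on the generalized stratum $B^{[i]}_\Gamma$, because the tautological rings are defined for generalized strata and are closed under $(c)_*(p)^*$ and under products with $\psi$-classes. This gives a Künneth decomposition all of whose factors are tautological.

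\textbf{Main obstacle.} The hard part is making Step~1 rigorous on the level of $D^s_\Gamma$ and $B_\Gamma$, not just on $D_\Gamma$. Three issues need care: the finite maps $c$ and $p$, the ghost-automorphism degrees, and the fact that the $\psi$-classes $\psi_{e^\pm}$ in $\nu$ split between levels. I would handle these by citing the commutative diagrams of \cite[\S4]{comoza} relating $D^s_\Delta$, $D^s_\Gamma$ and $B_\Gamma$. These diagrams are precisely what \cite{comoza} uses to prove their intersection formula, and that formula already yields classes of the form $\prod_i(p^{[i]})^*(\text{taut})$.

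A final remark concerns the statement ``the Künneth decomposition consists of tautological classes'', which should be read as saying that the class lies in $\bigotimes_i \RH^*(B^{[i]}_\Gamma)$. To apply this, I would note that the Künneth components of a class are uniquely determined once we fix bases compatible with the subspaces $\RH^*\subset\cohH^*$. Consequently, a class written as a sum of products of tautological classes has all of its Künneth components tautological.
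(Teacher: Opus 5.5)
Your proposal is correct and takes the same route as the paper. You reduce to the additive generators of \Cref{thm: fin_gen_taut_ring} and then compute the pullback to $D_\Gamma$ of a pushforward from $D_{\Gamma'}$ by the excess intersection formula, which the paper quotes directly as \cite[Prop.~8.1]{comoza} instead of sketching it as in your Steps~1--2; your reading of the statement as membership in $\bigotimes_i \RH^*(B_\Gamma^{[i]})$ matches how the corollary is used later.
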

The analog of this statement for~$\overline\cM_{g,n}$ was proven in \cite[Prop.~12]{graberpandharipande}.
\begin{proof}
    Given a level graph $\Lambda\in \LG_{L,H}$ and a class $\alpha\in \CH^*(D_\Lambda)$, the pushforward to~$\ocHgnm$ and then pullback to~$D_\Gamma$ is computed in \cite[Prop.~8.1]{comoza}. As all tautological classes are generated in this way by \Cref{thm: fin_gen_taut_ring}, the corollary is proven.
\end{proof}

\section{Non-trivial classes in $\cohH^3(\ocH)$}\label{section:nontrivial_H3}
In this section we use a method from \cite[\S~6]{calapa}, building upon some of the ideas from \cite{graberpandharipande}, to construct non-trivial degree 3 cohomology classes on compactified strata. The basic building block will be the non-trivial first (co)homology of the residueless genus~$1$ strata. By \Cref{prop:genusLT} these are complex curves that have a connected component of positive genus, except in a few cases. For all constructions below, we note that in any genus and for any~$\mu$, we have $m_1\ge 0$. Indeed, as we are ordering~$m_1$ in a non-increasing way, otherwise we would have $-1\ge m_1\ge \dots \ge m_n$, so that in particular we must have $g=0$ but then $\sum m_i=-2$ implies $n=2$, which is unstable.

In any stratum consider an irreducible component~$D_\Gamma$ of the boundary divisor with level graph
\begin{figure}[H]
  \centering
\begin{tikzpicture}[strata,baseline]
  \node[vertex] (1) at (0,0) {$1$};
  \node[vertex] (g1) at (-1.5,2) {$g_1$};
  \node[vertex] (gk) at ( 1.5,2) {$g_k$};
  \node at ($(g1)!0.5!(gk)$) {$\ldots$};
  \draw[thick] (1) -- (g1);
  \draw[thick] (1) -- (gk);
  \draw (1.south) -- ++(0,-0.6)
        node[leglabel,anchor=north] {$z_1$};
\end{tikzpicture}
\caption{Level graph with a genus~$1$ residueless stratum in the bottom level}
\label{fig: divisor_residueless_bottom_level}
\end{figure}
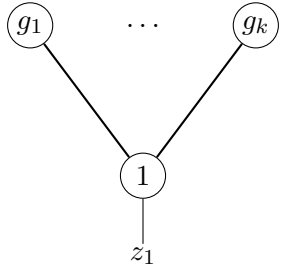
\noindent where we have not labeled the location of the marked zeroes or poles $z_2,\dots,z_n$, but all of them will be on the top level components. Moreover, we will require all marked poles to be contained in the rightmost top level component, of genus~$g_k$. We will in due course investigate whether it is possible to assign~$g_j$, distribute~$z_j$, and assign enhancements in such a way that such a boundary stratum in~$\ocHgnm$ exists. 

Since top level vertices of genera $g_1,\dots,g_{k-1}$ parameterize holomorphic strata, they each impose a global residue condition at the corresponding unique vertical edge: that the residue is zero at the corresponding pole at the bottom. Then by the residue theorem on the bottom level, the residue at the edge connecting to the genus~$g_k$ vertex is also zero. Thus the bottom level parameterizes the residueless stratum $\cR_{1,1+k}(m_1,-b_1,\dots,-b_k)$, where $b_j=\kappa_j+1$ in terms of the enhancement on the corresponding vertical edge. Thus by \Cref{prop:genusLT} unless $(m_1,-b_1,\dots,-b_k)$ is one of the exceptional cases given in \eqref{eq:exceptionsLT}, this genus~$1$ residueless stratum has a connected component~$\cR$ that is a curve of positive genus.  We thus consider some connected component~$\ocH$ of~$\ocHgnm$ and some irreducible boundary component~$D_\Gamma$ such that the bottom level is that connected component~$\cR$ (this is possible, because we just take a boundary point in~$\cR$, and it can be smoothed into the open stratum~$\cHgnm$).
\begin{rem}
For the cases $\mu=(2g-2)$ or $(g-1,g-1)$ of holomorphic strata that have a hyperelliptic connected component, the component~$D_\Gamma$ such that the bottom level~$\cR$ in the graph above has positive genus is contained in one of the non-hyperelliptic components of~$\ocHgnm$. Indeed,~$\cM_{0,2g+2}$ surjects onto the hyperelliptic component $\cH^{hyp}\subset \cH_{g,1}(2g-2)$, with $\cM_{0,2g+2}/S_{2g+1}\cong \cH^{hyp}$. Using admissible covers, this surjection extends to a morphism $\overline{\cM}_{0,2g+2}/S_{2g+1}\rightarrow \cH^{hyp}_{DM}\subset \overline\cM_{g,1}$, where~$\cH^{hyp}_{DM}$ denotes the closure of~$\cH^{hyp}$ in~$\overline\cM_{g,1}$. The image of~$D_\Gamma$ in~$\cH_{DM}$ can be realized as the image of a boundary stratum in~$\overline{\cM}_{0,2g+2}$, and so cannot have non-trivial first cohomology. On the other hand, if $\mu=(g-1,g-1)$ and $(C,p,q)\in \cH^{hyp}$, then~$p,q$ are conjugate under the hyperelliptic involution. But this is not possible for any~$2:1$ admissible cover from a stable curve with dual graph~$\Gamma$ to a curve of genus~$0$.
\end{rem}

We now prove that this constructs non-trivial cohomology classes on compactified strata.
\begin{lem}\label{lemma: nontrivial_h3}
If we can arrange a choice of $g_1,\dots,g_k,\kappa_1,\dots,\kappa_k$, and a choice of a partition of $z_2,\dots, z_n$ among the top level components with all poles contained in the genus~$g_k$ component, such that~$\cR$ has positive genus and such that furthermore $\Aut(\Gamma)=1$, then $\cohH^1(D_\Gamma)\ne 0$ and also $\cohH^3(\ocH)\ne 0$. 
\end{lem}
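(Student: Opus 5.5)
The plan has two stages: show $\cohH^1(D_\Gamma)\ne0$ via the finite maps $c_\Gamma$, $p_\Gamma$ of \eqref{eq:cp}, then push forward to get a class in $\cohH^3(\ocH)$, detecting non-vanishing by pulling back and using the normal bundle formula \eqref{eq:normal_bundle}, in the spirit of \cite[\S6]{calapa} and \cite{graberpandharipande}.

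\textbf{Step 1: $\cohH^1(D_\Gamma)\ne0$.} Here $B_\Gamma=B_\Gamma^\top\times B_\Gamma^\bot$, and $B_\Gamma^\bot$ is (a compactification of) the residueless curve $\cR$. Since $\cR$ is a smooth proper curve of positive genus, $\cohH^1(\overline\cR)\ne0$, and Künneth gives $\cohH^1(B_\Gamma)\ne0$. The map $p_\Gamma$ is finite and surjective, so $p_\Gamma^*$ is injective on rational cohomology (use $p_{\Gamma*}p_\Gamma^*=\deg p_\Gamma$). The map $c_\Gamma$ is finite surjective onto $D_\Gamma$, so $c_{\Gamma*}c_\Gamma^*=\deg c_\Gamma$. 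Since $\Aut(\Gamma)=1$, the class $c_{\Gamma*}p_\Gamma^*\beta$ for $\beta\in\cohH^1(\overline\cR)$ is nonzero. To see this, pull back by $c_\Gamma$ and push down by $p_\Gamma$, and use the excess/self-intersection description of $c_\Gamma^*c_{\Gamma*}$ from \cite{comoza}. Because there are no graph automorphisms, this composite is a nonzero multiple of $\beta$ plus nothing else.

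An alternative, which I expect to be cleaner, is to argue on a smooth DM stack. Here $D_\Gamma$ is a smooth divisor, since the boundary is simple normal crossing away from the horizontal boundary and $\Gamma$ is vertical. So $\cohH^1(D_\Gamma)$ carries a pure Hodge structure, and $c_\Gamma^*$ is injective; the issue is surjectivity onto the relevant part. I would compose $p_{\Gamma*}c_\Gamma^*c_{\Gamma*}p_\Gamma^*$ and check it is a nonzero multiple of the identity on $\cohH^1(B_\Gamma)$ when $\Aut(\Gamma)=1$.

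\textbf{Step 2: $\cohH^3(\ocH)\ne0$.} Take $\alpha=\iota_*\gamma\in\cohH^3(\ocH)$, where $\gamma=c_{\Gamma*}p_\Gamma^*(\beta)$ with $\beta$ pulled back from the bottom factor. Then
\[\iota^*\iota_*\gamma=\gamma\cdot c_1(N_\Gamma)\in\cohH^3(D_\Gamma).\]
Pull this back to $D_\Gamma^s$ and use $c_\Gamma^*c_1(N_\Gamma)=p_\Gamma^*(\nu_\Gamma^\top+\nu_\Gamma^\bot)$. Under Künneth on $B_\Gamma^\top\times B_\Gamma^\bot$, the $(\cohH^2(B^\top)\otimes\cohH^1(B^\bot))$ component of $\beta\cdot\nu_\Gamma$ is $\nu_\Gamma^\top\otimes\beta$. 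The term $\nu^\bot\cdot\beta$ lands in $\cohH^3$ of a curve, so it vanishes. It therefore suffices to show $\nu_\Gamma^\top\ne0$ in $\cohH^2(B_\Gamma^\top)$.

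\textbf{Main obstacle: showing $\nu_\Gamma^\top\ne0$.} The top level is a product of holomorphic strata $\ocH_{g_j}$ for $j<k$ and one meromorphic stratum, modulo a common $\bC^*$. Then
\[\nu_\Gamma^\top=-\frac{1}{\ell_\Gamma}\Bigl(\sum_j\kappa_j\psi_{e_j^+}+\textstyle\sum_\Delta\widehat\ell_\Delta D_\Delta\Bigr).\]
I would first try to test it against a curve in one holomorphic top factor, say $g_1$, on which $\psi$ pairs positively with the boundary terms. Alternatively, I would use that on a holomorphic stratum $\psi$ of the maximal zero is a negative multiple of $\lambda$ modulo boundary, and invoke \Cref{prop:indep} for independence from the boundary. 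If that is delicate, I would fall back on the fact that $\psi_{e^+}$ on a projectivized stratum is $-\xi$ restricted, which is nonzero on the top-dimensional factor because it is ample-ish (the tautological bundle $\cO(-1)$ has nonzero class whenever the factor has positive dimension).

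Degenerate cases, such as all top factors being points, need checking against the genus hypotheses. If they occur, I would instead pair with classes in $\cohH^*(B^\top)$ and use the projection formula so that only the $\beta$-component survives.
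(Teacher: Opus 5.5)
Your overall architecture matches the paper's. You push $\gamma=c_{\Gamma*}p_\Gamma^*\beta$ forward along $\iota$ and restrict back using $\iota^*\iota_*=c_1(N_\Gamma)\cdot(-)$. You then replace $c_1(N_\Gamma)$ by $\nu_\Gamma^\top+\nu_\Gamma^\bot$, discard $\nu_\Gamma^\bot\cdot\beta$ for dimension reasons, and reduce to $\nu_\Gamma^\top\ne0$ in $\cohH^2(B_\Gamma^\top)$.

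\textbf{The gap.} That last step, which you yourself flag as the main obstacle, is never proved, and none of your three suggestions closes it.
\begin{itemize}
\item Showing $\psi_{e^+}\ne0$ (via $\xi$ or ``ample-ish'') is not enough. The class $\nu_\Gamma^\top$ also contains $\sum_\Delta\widehat\ell_{\Delta,a_{\Delta,\Gamma}}D_\Delta$, which could a priori cancel it.
\item The route via $\lambda$ and \Cref{prop:indep} does not apply. $B_\Gamma^\top$ is a product of several strata modulo a common $\bC^*$, not a single holomorphic stratum. Moreover, \Cref{prop:indep} only concerns relations among boundary divisors and says nothing about $\lambda$ or $\psi$.
\item The curve you would ``test against'' is never specified.
\end{itemize}

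\textbf{The missing idea.} What is needed is a sign argument. The set $\LG_{2,e}^{\Gamma,[0]}$ is non-empty, because moving the top-level vertex carrying $e^+$ up one level keeps the dual graph and makes $e$ long. Hence $-\ell_\Gamma\nu_\Gamma^\top=\kappa_e\psi_{e^+}+E$, where $E$ is a non-zero effective divisor on $B_\Gamma^\top$ and $\psi_{e^+}$ is nef. It follows that $\nu_\Gamma^\top\cdot H^{d-1}<0$ for $H$ ample and $d=\dim B_\Gamma^\top$. In the paper's words: a nef class cannot be anti-effective.

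\textbf{Step 1.} Your Step 1 is salvageable but misjustified. There is no excess-intersection formula for $c_\Gamma^*c_{\Gamma*}$, since $c_\Gamma$ is a finite surjection onto $D_\Gamma$, not an embedding. What actually works is \eqref{eq:cpp}. Since $c_\Gamma^\Gamma$ is an isomorphism (this is where $\Aut(\Gamma)=1$ enters), on $B_\Gamma^\circ$ the composite $p_{\Gamma*}c_\Gamma^*c_{\Gamma*}p_\Gamma^*$ is multiplication by $\deg(p_\Gamma^\Gamma)(\deg q_\Gamma)^2$. This uses only $q_{\Gamma*}q_\Gamma^*=\deg q_\Gamma$. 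Because $B_\Gamma$ is smooth and proper, $\cohH^1(B_\Gamma)\to\cohH^1(B_\Gamma^\circ)$ is injective, so no boundary correction survives in degree~$1$.

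This is the mechanism the paper uses in degree~$2$ for \Cref{lem:nontaut_H2_boundary}. For the present lemma the paper argues differently: it resolves $D_\Gamma\dashrightarrow B_\Gamma$ by some $X_\Gamma$ with quotient singularities and invokes birational invariance of $\cohH^1$. This gives $\cohH^1(B_\Gamma)\hookrightarrow\cohH^1(D_\Gamma)$ directly. Either route is fine once stated correctly.
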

In fact, the proof below will show that $\cohH^1(B_\Gamma^\bot)$ injects into~$\cohH^3(\ocH)$, so that the rank of~$\cohH^3(\ocH)$ is equal to at least~$2g(\cR)$. We do not pursue this improvement here, as we will eventually get exponential growth order anyway.
\begin{proof}
Indeed, if $\Aut(\Gamma)=1$, then by \cite[\S~4]{comoza} the map $c_\Gamma^\Gamma$ in \eqref{eq:cpp} is an isomorphism, and thus $p_\Gamma^\Gamma\circ (c_\Gamma^\Gamma)^{-1}\colon D_\Gamma^\circ\to B_\Gamma^\circ$ is a  finite morphism. We view this as a generically finite rational map $D_\Gamma\dashrightarrow B_\Gamma$, and take a resolution $f_\Gamma:X_\Gamma\rightarrow B_\Gamma$ such that~$X_\Gamma$ has only finite quotient singularities and $g_\Gamma:X_\Gamma\rightarrow D_\Gamma$ is a birational morphism, biregular over~$D_\Gamma^\circ$. Then by \cite[Thm.~7.8]{kollarshafarevichmaps}, $\cohH^1(X_\Gamma)\cong \cohH^1(D_\Gamma)$, and thus~$\cohH^1(B_\Gamma)$ injects into~$\cohH^1(D_\Gamma)$; since $\cohH^1(B_\Gamma^{[-1]})\neq 0$, it follows that~$\cohH^1(D_\Gamma)$ is also non-trivial, and we take a non-zero class~$\alpha$ in it.

To show that this also gives a non-zero class in~$\cohH^3(\ocH)$, we use an idea from \cite[\S~6]{calapa}, by defining such a class by pushforward under the inclusion $\iota:D_\Gamma\hookrightarrow\ocH$, and then showing that it is non-zero by restricting back to~$D_\Gamma$. By abuse of notation, from now on we denote the class $(g_\Gamma)_*(f_\Gamma)^*(1\otimes\alpha)$ also by $1\otimes\alpha$. 

We then have $\iota^*\iota_*(1\otimes\alpha)=c_1(N_{D_\Gamma})\cdot(1\otimes \alpha)$. By \eqref{eq:normal_bundle} and the discussion that follows, $c_\Gamma^*(c_1(N_{D_\Gamma}))=p_\Gamma^*(\nu_\Gamma^\top+\nu_\Gamma^\bot)$, where~$\nu_\Gamma^\top$ (resp.~$\nu_\Gamma^\bot$) is a tautological class on~$B_\Gamma$ pulled back from~$B_\Gamma^\top$ (resp.~$B_\Gamma^\bot$). Using K\"unneth decomposition, it is enough to show that 
$$\nu_\Gamma^\top= -\frac{\kappa_e}{\ell_{\Gamma}}\psi_{e^+}-\frac{1}{\ell_\Gamma}\sum_{\Delta\in \LG_{2,e}^{\Gamma,[0]}}\sum_{D_\Delta}\widehat\ell_{\Delta,a_{\Delta,\Gamma}}D_\Delta$$
is non-trivial on~$B_\Gamma^\top$ (where~$e$ is an edge in~$\Gamma$). This is indeed clear since the set $\LG_{2,e}^{\Gamma,[0]}$ is non-empty: the vertex adjacent to~$e^+$ can be moved up without changing the underlying dual graph, while~$\psi_{e^+}$ is a nef class, so it cannot be anti-effective.
\end{proof}
To prove that~$\cohH^3$ is non-zero it thus suffices to show that the enhanced level graph~$\Gamma$ can be realized in the boundary, which we will show is the case for $m_1\ge 9$. To get an exponential lower bound for the dimension of the cohomology, we will show that the classes thus constructed, starting from different boundary strata, are linearly independent.
\begin{proof}[Proof of \Cref{thm:H3}]
Since $g\ge 3$, or $g\ge 2$ and $n\ge 3$, the level graph~$\Gamma$ as above, with $k=2$ and $g_1=1$, that is with two vertices of genera~$1$ and~$g-2$ in the top level, is the dual graph of a stable curve. In particular, the marked points $z_2,\ldots,z_n$ are all on the genus $g-2$ component in the top level (where $g-2\ge 0$ by assumption, and $g-2=0$ is possible). Then the bottom level parameterizes the  residueless genus~$1$ stratum $\cR_{1,3}(m_1,-2,2-m_1)$, and by~\Cref{prop:genusLT}, precisely for $m_1\ge 9$ this stratum has a connected component that is a curve of positive genus, and thus $\cohH^3(\ocH)\neq 0$. 
        
We now proceed to show that~$\dim \cohH^3$ grows at least exponentially quickly as~$m_1$ increases, for $m_2,\dots,m_n$ fixed.

Denote by $\LG_{1,0}^\cR$ the collection of enhanced level graphs with two levels and no horizontal edge, such that the bottom level is a residueless genus~$1$ stratum  (i.e.~these are the graphs of the form in \Cref{fig: divisor_residueless_bottom_level}). Then for any graph $\Gamma\in \LG_{1,0}^\cR$, we have seen that $\cohH^1(B_\Gamma^\perp)$ injects into~$\cohH^3(\ocHgnm)$. The claim below will show that the images of these maps for all $\Gamma\in \LG_{1,0}^\cR$ are linearly independent, so it remains to obtain an exponential lower bound for the number of such~$\Gamma$ that are realizable within a given stratum~$\ocHgnm$ (i.e.~such that the conditions of \Cref{lemma: nontrivial_h3} are satisfied).

We write $m_1=M+2N-2$, and consider any partition of~$N$ into ordered positive distinct integer parts, that is $N=t_1+\dots+t_k$, with $t_1>\dots> t_k>0$ and $t_i\in \bZ$.  By using Hardy-Ramanujan's growth order $p(N)\sim (4N\sqrt{3})^{-1}\exp (\pi\sqrt{2N/3})$ for the number of all partitions of~$N$ \cite[eq.~(5.22)]{hardyramanujan}, and using the bijection between partitions into odd parts and into distinct parts, the growth order of the number of partitions of~$N$ into distinct parts is $\exp(\pi\sqrt{N/3})$ \cite[eq.~(5.27)]{hardyramanujan}.

For each such partition, we consider the two-level graph $\Gamma\in \LG_{1,0}^{\cR}$ of genus~$g$ with only vertical edges and~$k+1$ vertices of genera $t_1,\dots,t_k$ and $(M+m_2+\dots +m_n-2)/2=g-N-1$ in the top level, and a vertex representing residueless stratum of genus~$1$ at the bottom level. The~$k$ vertices of genera $t_1,\ldots,t_k$ have no marked legs and are each connected by an edge with enhancement~$2t_i-1$ to the bottom level vertex. The only vertices to have marked zeroes or poles will be the bottom vertex, with the unique zero~$z_1$ of order $m_1=M+2N-2$, and the top level vertex of genus~$g-N-1$, carrying zeroes and poles $z_2,\dots,z_n$. Since all~$t_i$ are distinct, they cannot be permuted by automorphisms. Thus unless~$n=1$, so that the top level vertex of genus~$g-N-1$ has no marked point,~$\Aut(\Gamma)=1$. For the case $\mu=(2g-2)$, we take~$N=g-1$ instead, and remove the top level vertex of genus~$g-N-1$ from~$\Gamma$. Thus the graph~$\Gamma$ obtained from any such partition of~$N$ satisfies the conditions of \Cref{lemma: nontrivial_h3}. The claim below shows that each~$\Gamma$ contributes at least one (independent) dimension to~$\cohH^3(\ocHgnm)$, completing the proof.
\end{proof}

\begin{claim}\label{claim:indepH3}
    The morphism $\bigoplus_{\Gamma\in \LG_{1,0}^\cR}\bigoplus_{D_\Gamma} \cohH^1(B_\Gamma^\bot)\rightarrow \cohH^3(\ocHgnm)$ is injective.
\end{claim}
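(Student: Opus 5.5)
We prove the claim by restricting classes back to the boundary divisors, following the idea of \cite[\S~6]{calapa} used in \Cref{lemma: nontrivial_h3}. The map sends $\alpha_{D_\Gamma}\in\cohH^1(B_\Gamma^\bot)$ to $\iota_{D_\Gamma*}(1\otimes\alpha_{D_\Gamma})\in\cohH^3(\ocH)$. Here $1\otimes\alpha_{D_\Gamma}$ denotes the class on $D_\Gamma$ obtained via the resolution $X_\Gamma$ as in that proof.

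Suppose $\sum_{\Gamma,D_\Gamma}\iota_{D_\Gamma*}(1\otimes\alpha_{D_\Gamma})=0$. Fix one divisor $D_{\Gamma_0}$ and pull the relation back by $\iota_{D_{\Gamma_0}}^*$, then by $c_{\Gamma_0}^*$, and push forward to $B_{\Gamma_0}$ via $p_{\Gamma_0*}$. The diagonal term gives, as in \Cref{lemma: nontrivial_h3}, $(\nu_{\Gamma_0}^\top+\nu_{\Gamma_0}^\bot)\cdot(1\otimes\alpha_{D_{\Gamma_0}})$ up to a nonzero constant. Its Künneth component in $\cohH^2(B^\top_{\Gamma_0})\otimes\cohH^1(B^\bot_{\Gamma_0})$ is $\nu^\top_{\Gamma_0}\otimes\alpha_{D_{\Gamma_0}}$. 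Since $\nu^\top_{\Gamma_0}\neq0$, this term is nonzero exactly when $\alpha_{D_{\Gamma_0}}\neq 0$.

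For an off-diagonal term with $D_\Gamma\neq D_{\Gamma_0}$, the pullback $\iota_{D_{\Gamma_0}}^*\iota_{D_\Gamma*}(1\otimes\alpha)$ is supported on $D_\Gamma\cap D_{\Gamma_0}$. Because the vertical boundary is simple normal crossing away from the horizontal divisors, this intersection is a union of codimension-two strata $D_\Delta$ with $\Delta\in\LG_{2,0}$ degenerating both $\Gamma$ and $\Gamma_0$. Other irreducible components with the same level graph $\Gamma_0$ do not meet $D_{\Gamma_0}$, so they contribute nothing. For the remaining terms, the plan is to show that their contribution has zero component in $\cohH^2(B^\top_{\Gamma_0})\otimes\cohH^1(B^\bot_{\Gamma_0})$.

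There are two cases for $\Delta$.

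\emph{Case 1: $\Delta$ degenerates the top level of $\Gamma_0$.} Then the bottom level of $\Delta$ is still the genus~$1$ residueless curve $B^\bot_{\Gamma_0}$. Viewed in $\Gamma$, this $\Delta$ is obtained by splitting off the lower part, and $\Gamma$ itself has a genus~$1$ residueless bottom. The bottom of $\Gamma$ must then equal the middle-plus-bottom of $\Delta$ merged. This forces the middle level of $\Delta$ to be part of a genus~$1$ residueless vertex at the bottom, which is impossible when the bottom already carries the genus~$1$ vertex with $z_1$. In the remaining subcase the contribution is $\alpha$ pulled back from the bottom level of $\Gamma$, restricted to $D_\Delta$. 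Here the bottom level of $\Gamma$ degenerates to the middle and bottom of $\Delta$, and a genus~$1$ single-zero residueless curve can only degenerate to boundary points. The restriction of $\alpha$ to such a point (a zero-dimensional locus of the curve) vanishes in $\cohH^1$.

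\emph{Case 2: $\Delta$ degenerates the bottom level of $\Gamma_0$.} Then the pushed-forward class is $(\text{class on }B^\top_{\Gamma_0})\otimes(\text{pushforward of a point class})$, which lies in $\cohH^*(B^\top)\otimes\cohH^2(B^\bot)$. This is a different Künneth type, so it does not affect the $\cohH^2\otimes\cohH^1$ component.

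In either case the off-diagonal terms vanish in the relevant Künneth summand. Hence each $\alpha_{D_{\Gamma_0}}=0$, which proves injectivity.

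The main obstacle is making Case~1 rigorous. One must justify that the Künneth component of $\iota^*\iota_*(1\otimes\alpha)$ through a codimension-two stratum factors through restricting $\alpha$ to a boundary point of the curve $\overline{\cR}$. This requires the excess-intersection formula \cite[Prop.~8.1]{comoza} together with the compatibility of $p_\Gamma,c_\Gamma$ for nested strata. I would first attempt it using the projection formula on $D_\Delta^s$, where $\cohH^1$ of a point vanishes.
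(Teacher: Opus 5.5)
\textbf{Verdict: gap in Case~1, and a missing idea (pairwise disjointness) that makes that case unnecessary.}

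\textbf{What works, and where the gap is.} Your skeleton is sound. Restricting the relation to a fixed $D_{\Gamma_0}$, applying $(p_{\Gamma_0})_*(c_{\Gamma_0})^*$, and isolating the $\cohH^2(B^\top_{\Gamma_0})\otimes\cohH^1(B^\bot_{\Gamma_0})$ K\"unneth component does reduce the diagonal term to \Cref{lemma: nontrivial_h3}. Your Case~2 is also fine: the support lies in $B^\top_{\Gamma_0}\times\{\mathrm{pt}\}$, so the contribution has K\"unneth type $\cohH^*\otimes\cohH^2$. The gap is Case~1, which you flag yourself.
\begin{itemize}
\item Your assertion that such a $\Delta$ is ``impossible when the bottom already carries the genus~$1$ vertex with $z_1$'' comes with no reason.
\item Your fallback is not automatic either. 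The class $1\otimes\alpha$ was defined as $(g_\Gamma)_*(f_\Gamma)^*(1\otimes\alpha)$, with $g_\Gamma$ biregular only over $D_\Gamma^\circ$. So its restriction to $D_\Delta\subset\partial D_\Gamma$ does not visibly factor through a point.
\item One could repair this by replacing the class with $(c_\Gamma)_*(p_\Gamma)^*(1\otimes\alpha)$, which is legitimate since $\cohH^1(D_\Gamma)\hookrightarrow\cohH^1(D_\Gamma^\circ)$. One would then use base change for the finite map $c_\Gamma$, together with the fact that $p_\Gamma$ sends the preimage of $D_\Delta$ into $B^\top_\Gamma\times\partial\overline{\cR}$. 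You have not done this.
\end{itemize}

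\textbf{The missing idea: disjointness.} This is what the paper uses: there are no off-diagonal terms at all, because the divisors $D_\Gamma$, $\Gamma\in\LG^\cR_{1,0}$, are pairwise disjoint. The reason is that the one-zero genus~$1$ residueless curve $\overline{\cR}$ degenerates only to curves of non-compact type.
\begin{itemize}
\item Consider a compact-type two-level degeneration of the bottom vertex. Some genus~$0$ vertex has a single zero: a middle-level vertex if the genus~$1$ vertex stays at the bottom, and otherwise the bottom vertex containing $z_1$.
\item Each connected component above that vertex not containing the marked poles is joined to it by a single edge. So the GRC together with the residue theorem makes all its poles residueless.
\item Write this differential as $df$ on $\bP^1$, with $p$ poles and one zero of order $a$. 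Then $a+1\le \deg f=a+2-p$, so $p=1$ and the vertex is unstable, a contradiction.
\end{itemize}
Now suppose $D_\Delta\subset D_\Gamma\cap D_{\Gamma_0}$. Whichever of the two graphs has its bottom level degenerated in $\Delta$, the bottom level of the other graph is level $-2$ of $\Delta$. That level has genus~$0$, so the other graph is not in $\LG^\cR_{1,0}$. Hence both of your cases are empty.

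Combined with the disjointness of components sharing a level graph, restricting to $D_{\Gamma_0}$ leaves only $c_1(N_{D_{\Gamma_0}})\cdot a_{D_{\Gamma_0}}\alpha_{D_{\Gamma_0}}=0$. Your diagonal computation already handles that.
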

\begin{proof}
Suppose $\sum_\Gamma\sum_{D_\Gamma} a_{D_\Gamma}\alpha_{D_\Gamma}=0$, where $\alpha_{D_\Gamma}$ is a class in~$\cohH^3(\ocH)$ given by the pushforward of a class in~$\cohH^1(B_\Gamma^\bot)$ as above, and the sum runs over all $\Gamma\in \LG_{1,0}^\cR$ and for each such~$\Gamma$ over all irreducible components~$D_\Gamma$ of the stratum with this enhanced level graph. Then for a given $\Lambda\in \LG_{1,0}^\cR$ and a given~$D_{\Lambda}$, denoting by~$\iota_{D_\Lambda}$ the inclusion $D_{\Lambda}\hookrightarrow \ocH$, we obtain 
$$
  \sum_{\Gamma\in \LG_{1,0}^\cR}\sum_{D_\Gamma}a_{D_\Gamma}(\iota_{D_\Lambda})^*\alpha_{D_\Gamma}=0\in \cohH^3(D_\Lambda)\,.
$$ 
Note now that $D_\Gamma\cap D_\Lambda=\emptyset$ for any $\Gamma\ne \Lambda\in \LG_{1,0}^\cR$, since the bottom level of each such graph is a curve, which is a residueless stratum, and thus only has degenerations of noncompact type. Moreover, any other irreducible boundary divisor with the same enhanced level graph~$\Lambda$ is disjoint from~$D_\Lambda$. Thus we obtain the equality $c_1(N_{D_\Lambda})\cdot a_{D_\Lambda}\alpha_{D_\Lambda}=0\in \cohH^3(D_\Lambda)$. As seen in the proof of \Cref{lemma: nontrivial_h3}, this can happen only if $a_{D_\Lambda}\alpha_{D_\Lambda}=0$, and thus the claim follows from the injectivity of the map $\cohH^1(B_\Lambda^\bot)\rightarrow \cohH^3(\ocH)$ for the~$B_\Lambda$ corresponding to a given fixed~$D_\Lambda$.
\end{proof}
We now state some corollaries of \Cref{thm:H3}.
\begin{cor}\label{cor:nontrivialH5}
    Suppose $g\ge 2$ and $m_1\ge 9$ (and if $g=2$, then suppose also $n\ge 3$), or $g\ge 3$ and $n\ge 3$ and $m_1+m_2+m_3\ge 9$ and $m_3\ge 0$, or $g\ge 3$ and $n\ge 4$ and $m_1+m_2+m_3\ge 9$, or $g=2$ and $n\ge 5$ and $m_1+m_2+m_3\ge 9$. Then $\cohH^5(\ocHgnm)\ne 0$.
\end{cor}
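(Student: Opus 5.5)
The plan is to repeat the argument of \Cref{lemma: nontrivial_h3} one codimension higher. I would take a three-level boundary stratum $D_\Delta$ with $\Delta\in\LG_{2,0}$, one of whose levels is a positive-genus connected component $\cR$ of a genus~$1$ residueless stratum, arranged so that $\Aut(\Delta)=1$. The class will be a nonzero $\alpha\in\cohH^1(D_\Delta)$ coming from $\cohH^1(\cR)$, pushed forward under the codimension-$2$ inclusion $\iota\colon D_\Delta\hookrightarrow\ocH$. This gives $\iota_*\alpha\in\cohH^5(\ocH)$. To see that it is nonzero I would restrict back and use $\iota^*\iota_*\alpha=c_2(N_{D_\Delta})\cdot\alpha$. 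Since the vertical boundary is simple normal crossing, $D_\Delta$ is a component of $D_1\cap D_2$, so $c_2(N_{D_\Delta})=c_1(N_1)c_1(N_2)$. Both factors lift via $p_\Delta$ to sums of tautological classes $\nu^{[i]}$ pulled back from single levels, by \eqref{eq:normal_bundle} and \eqref{eq:general_normal_bundle}.

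The construction depends on the case.
\begin{itemize}
\item First case ($g\ge2$, $m_1\ge9$). Put a genus~$1$ holomorphic vertex at level~$0$ and the genus $g-2$ vertex carrying $z_2,\dots,z_n$ at level~$-1$. Both connect by single edges to a bottom residueless vertex carrying $z_1$, so the level-$0$ edge is long. The bottom level is then $\cR_{1,3}(m_1,-2,2-m_1)$, exactly as in the proof of \Cref{thm:H3}, and \Cref{prop:genusLT} gives positive genus for $m_1\ge9$.
\item Remaining cases. Merge $z_1,z_2,z_3$: put a genus~$0$ bottom vertex carrying $z_1,z_2,z_3$ and a single pole of order $m_1+m_2+m_3+2$. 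Above it put a genus~$1$ residueless middle vertex with a zero of order $m_1+m_2+m_3\ge9$. At the top put holomorphic vertices, one of genus~$1$ and one of genus $g-2$ carrying $z_4,\dots,z_n$. The hypotheses $m_3\ge0$ or $n\ge4$, and $n\ge5$ when $g=2$, are there to make the vertices stable and the graph realizable with trivial automorphisms.
\end{itemize}

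The steps, in order, are as follows.
\begin{enumerate}
\item Check realizability, the global residue conditions (the zero-residue conditions force the $\cR$ level to be residueless), and $\Aut(\Delta)=1$.
\item Get injectivity of $\cohH^1(B_\Delta)\to\cohH^1(D_\Delta)$ by the same resolution argument: $c_\Delta^\Delta$ is an isomorphism, and \cite[Thm.~7.8]{kollarshafarevichmaps} applies to a resolution of the rational map $D_\Delta\dashrightarrow B_\Delta$.
\item Show that $c_1(N_1)c_1(N_2)\cdot(1\otimes\alpha\otimes1)$ is nonzero. Using the Künneth decomposition on $B_\Delta$, it suffices to exhibit one nonzero Künneth component. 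I would aim for the component in $\cohH^2(\text{level }a)\otimes\cohH^1(\cR)\otimes\cohH^2(\text{level }b)$, where $a,b$ are the two levels other than that of $\cR$. Its coefficient is a product of the two $\nu$-pieces living on those levels.
\item Show each of these pieces is nonzero as in \Cref{lemma: nontrivial_h3}. Each is $-(\kappa/\ell)\psi$ minus an effective combination of boundary divisors. It is nonzero because $\psi_{e^\pm}$ is nef and the boundary sum is nonempty, so the class cannot be anti-effective.
\end{enumerate}
For a genus~$0$ bottom level I would instead use \cite{Devkotacohomology} to see directly that the relevant $\nu^\bot$ is nonzero.

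I expect step 3 to be the main obstacle. One difficulty is that a level can receive contributions from both $c_1(N_1)$ and $c_1(N_2)$, so the chosen Künneth component may be a sum of several products rather than a single one. The other is that when the $\cR$ level sits adjacent to both level passages, the $\nu$-pieces on $\cR$ itself could mix in. I would first try to choose the graph so that each passage's class has a nonzero piece on a different non-$\cR$ level. If that fails, I would restrict further to a suitable curve or point in the top or bottom factor, where the degree of $\nu$ can be computed explicitly to be nonzero.
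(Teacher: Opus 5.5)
\textbf{Gap in your first case.} The statement includes $g=2$, $n=3$, $m_1\ge 9$. There $\mu$ is meromorphic and $\dim_{\bC}\ocH=2g+n-3=4$. So every codimension-$2$ stratum $D_\Delta$ has complex dimension $2$ and $\cohH^5(D_\Delta)=0$, and the test $\iota^*\iota_*\alpha=c_2(N_{D_\Delta})\cdot\alpha$ is vacuous whatever graph you choose. Concretely, in your graph the level~$-1$ vertex has genus~$0$ and carries only $z_2,z_3$ and one edge, so $B_\Delta^{[-1]}$ is a point. Hence $\nu_2^{[-1]}=0$, and $\nu_2^{[-2]}\cdot\alpha=0$ because $\cR$ is a curve, so already $c_1(N_2)\cdot\alpha=0$. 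This is the degeneracy the paper notes in the remark after the corollary.

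The missing idea is the one the paper uses. The first case of the statement is exactly the hypothesis of \Cref{thm:H3}, so $\cohH^3(\ocH)\neq 0$. Since $\dim_{\bC}\ocH\ge 4$ in all these cases, Hard Lefschetz makes cup product with an ample class injective from $\cohH^3(\ocH)$ to $\cohH^5(\ocH)$. No three-level stratum is needed.

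\textbf{Step 4 in your first case.} Even where your first construction has room ($g\ge3$, or $g=2$, $n\ge4$), step~4 does not work as written. Passage~$1$ of your $\Delta$ is crossed only by the long edge, and $\Delta$ arises from the divisor of \Cref{thm:H3} by degenerating its top level without creating a new edge. So \eqref{eq:general_normal_bundle} does not apply there; you need the $\xi$-class form of \cite[Thm.~7.1]{comoza}. Moreover level~$0$ is $\overline\cM_{1,1}$, which has no vertical boundary, so the boundary sum you rely on is empty. The level-$0$ piece is nonzero, being a multiple of $\lambda$, but not for the reason you give. The same emptiness occurs on level~$-1$ when $\mu=(2g-2)$, where you would need $\psi\neq0$ by other means.

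\textbf{Your second case.} This is correct and is essentially the paper's argument unwound. The paper takes the two-level divisor whose top level is $\ocH_{g,n-2}(\mu_0)$, with $\mu_0=(m_1+m_2+m_3,m_4,\dots,m_n)$. It pushes forward a class in $\cohH^3$ of that top level, supplied by \Cref{thm:H3}, and only needs $\nu^\bot\neq0$ on the one-dimensional genus~$0$ bottom level. Your $\Delta$ is that divisor with its top level further degenerated to the divisor used in \Cref{thm:H3}. Working directly with $\cohH^1(\cR)$ has an advantage: birational invariance of $\cohH^1$ via \cite{kollarshafarevichmaps} suffices, whereas the paper's route needs an $\cohH^3$ class of $B_\Gamma^\top$ to survive on $D_\Gamma$.

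The obstacle you anticipate in step~3 does not arise. Because $\cR$ is a curve, every $\nu^{[-1]}\cdot\alpha$ vanishes, so $c_1(N_1)c_1(N_2)\cdot(1\otimes\alpha\otimes1)=\nu_1^{[0]}\otimes\alpha\otimes\nu_2^{[-2]}$ exactly. Here $\nu_1^{[0]}\ne0$ by the toric argument, since level~$0$ has two vertices, and $\nu_2^{[-2]}$ has negative degree.

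One correction: the hypothesis ``$m_3\ge0$ or $n\ge4$'' is not about stability or automorphisms. It guarantees that if $z_3$ is a pole, some marked pole lies above the bottom level. Then the global residue condition does not cut the bottom level down to a point, which would kill $\nu_2^{[-2]}$.
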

\begin{proof}
If $m_1\ge 9$, then~$\mu$ satisfies the hypothesis of \Cref{thm:H3}, so the non-triviality of~$\cohH^5(\ocHgnm)$ follows from~$\cohH^3(\ocHgnm)\ne 0$ by applying Hard Lefschetz Theorem. 

If $n\ge 4, m_1\le 8,$ and $m_1+m_2+m_3\ge 9$ but~$m_3<0$, it follows that~$m_2$ is positive. Both in this case and in the case
$n\ge 3$,~$m_3\ge 0$ and $m_1+m_2+m_3\ge 9$, the tuple $\mu_0\coloneqq(m_1+m_2+m_3,m_4,\ldots,m_n)$ satisfies the hypothesis of \Cref{thm:H3}.  Then for the level graph~$\Gamma$ given below (where the marked points $z_4,\dots,z_n$ are at the top level), its top level vertex parameterizes the stratum $\ocH_{g,n-2}(\mu_0)$, and the bottom level parameterizes a $1$-dimensional stratum of differentials in genus~$0$ (this is where we need~$n\ge 4$ when~$m_3<0$ -- otherwise the top level would parameterize holomorphic differentials, so the global residue condition would impose a non-trivial condition on the differentials in the bottom level).
\begin{figure}[H]
  \centering
\begin{tikzpicture}[strata,baseline]
    \fill (0,0) circle (5pt);
  \coordinate (bottom) at (0,0);
  \node[vertex] (g)    at (0,2) {$g$};
  \draw[thick] (bottom) -- (g);
  \draw (bottom.south west) -- ++(-0.6,-0.6)
        node[leglabel,anchor=east] {$z_1$};
  \draw (bottom.south) -- ++(0,-0.6)
        node[leglabel,anchor=north] {$z_2$};
  \draw (bottom.south east) -- ++(0.6,-0.6)
        node[leglabel,anchor=west] {$z_3$};
\end{tikzpicture}
\end{figure}

Following as in the proof of \Cref{lemma: nontrivial_h3}, we see that it is enough to show that~$\nu_\Gamma^\bot$ is non-trivial (where, as discussed in \eqref{eq:normal_bundle},~$\nu_\Gamma^\bot$ is the bottom level contribution to the normal bundle of~$D_\Gamma$). But this is clear since $B_\Gamma^\bot\cong \overline{\cM}_{0,4}$, so~$\psi_{e^-}$ has strictly positive degree on~$B_\Gamma^\bot$, and thus~$\nu_\Gamma^\bot$ has strictly negative degree on~$B_\Gamma^\bot$.
\end{proof}
\begin{rem}
    In the proof of this corollary, it is essential to have at least three points colliding, as otherwise the push-pull can be easily checked to be trivial. Indeed suppose $g\geq 5$ and $n=2$, and consider a codimension 2 boundary stratum corresponding to the enhanced level graph~$\Lambda$ below:
\begin{figure}[H]
  \centering
\begin{tikzpicture}[strata,baseline]
  \fill (0,0) circle (5pt);
  \coordinate (bottom) at (0,0);
  \node[vertex] (1) at (0,2) {$1$};
  \node[vertex] (h1) at (-1.5,4) {$h_1$};
  \node[vertex] (hs) at (1.5,4) {$h_s$};
  \node at ($(h1)!0.5!(hs)$) {$\ldots$};
  \draw[thick] (bottom) -- (1);
  \draw[thick] (1) -- (h1);
  \draw[thick] (1) -- (hs);
  \draw (bottom.south west) -- ++(-0.6,-0.6)
        node[leglabel,anchor=east] {$z_1$};
  \draw (bottom.south east) -- ++(0.6,-0.6)
        node[leglabel,anchor=west] {$z_2$};
\end{tikzpicture}
\end{figure}

Then $D_\Lambda\subset D_{\Gamma_1}\cap D_{\Gamma_2}$ for two irreducible boundary divisors~$D_{\Gamma_1}$ and~$D_{\Gamma_2}$ such that~$\Lambda$ is the degeneration of the top level of~$\Gamma_1$ and of the bottom level of~$\Gamma_2$. Note that~$B_\Lambda^{[-1]}$ is as usual a residueless stratum that is abstractly a curve of positive genus, while~$B_\Lambda^{[-2]}$ is 0-dimensional. So for the edge~$e$ connecting the two lower levels, the normal bundle formula \eqref{eq:general_normal_bundle} gives $c_1(N_{\Lambda/\Gamma_2})=-\frac{\kappa_e}{\ell_{\Lambda,2}}\psi_{e^+}=-\psi_{e^+}$ since~$\psi_{e^-}=0$ and~$\Lambda$ has no degeneration that makes~$e$ longer. Since $$c_2(N_\Lambda)=c_1(N_{\Lambda/\Gamma_1})\cdot c_1(N_{\Lambda/\Gamma_2})\,,$$ for any $\alpha\in \cohH^1(B_\Lambda^{[-1]})\hookrightarrow \cohH^1(D_\Lambda)$, we must have $c_2(N_\Lambda)\cdot \alpha=0$ since $\psi_{e^+}\cdot \alpha=0$ (recall that~$B_\Lambda^{[-1]}$ has complex dimension~$1$, while $\psi_{e^+}\cdot \alpha\in \cohH^3(B_\Lambda^{[-1]})$).
\end{rem}

The next corollary gives an application to the Chow ring of~$\ocH$:
\begin{cor}\label{cor:uncountable_chow}
    If~$\mu$ satisfies the hypothesis of \Cref{cor:nontrivialH5}, then the Chow ring~$\CH^*(\ocHgnm)$ is uncountable, and in particular~$R^*(\ocHgnm)$ is a proper subset of~$\CH^*(\ocHgnm)$.
\end{cor}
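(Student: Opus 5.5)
The plan is to go from non-trivial odd-degree cohomology to uncountability of the Chow ring through Hodge theory and the Bloch--Srinivas decomposition of the diagonal, in the spirit of the argument for $\overline\cM_{g,n}$. By \Cref{cor:nontrivialH5}, $\cohH^5(\ocHgnm)\ne 0$ under the stated hypotheses. Since $\ocHgnm$ is a smooth proper Deligne--Mumford stack with projective coarse space, its rational cohomology carries a pure Hodge structure of weight~$5$. Because $h^{p,q}=h^{q,p}$ and the degree is odd, $\cohH^5$ has a non-zero piece $\cohH^{p,q}$ with $p\ne q$. Hence either $\cohH^{5,0}\ne0$ or $\cohH^{4,1}\ne 0$, or $\cohH^{3,2}\ne 0$, and in every case the Hodge structure is not of Tate type, i.e.~not concentrated in type $(k,k)$.

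Next, assume for contradiction that $\CH^*(\ocHgnm)$ is countable. Work on the coarse space, or on a finite cover by a smooth projective variety; with rational coefficients, pushforward and pullback identify the rational Chow groups and cohomology up to taking invariants, so countability passes to the needed setting. A standard argument then applies, due to Bloch--Srinivas and Jannsen, or equivalently Kimura/Vial's results on varieties with representable (countable) Chow groups. If $\CH_0(X)_\bQ$, and more generally all $\CH_i(X)_\bQ$, are countable, or equivalently if every $\CH_i$ is supported on something ``small'', then the diagonal decomposes completely. As a consequence, the cycle class map $\CH^*(X)_\bQ\to \cohH^*(X,\bQ)$ is an isomorphism and all cohomology is algebraic of type $(k,k)$. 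In particular odd cohomology vanishes, which contradicts the first step.

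The cleanest route cites this: if the cycle class maps were not all surjective onto Hodge classes, or if odd cohomology were present, then some Chow group is not finite-dimensional in the sense of Mumford/Roitman and is therefore uncountable. Concretely, a non-zero $\cohH^{p,q}$ with $p\ne q$ in degree $2k+1$ produces a non-trivial intermediate Jacobian or Abel--Jacobi image. It then yields uncountably many algebraically trivial cycles modulo rational equivalence, once we know some part of $\cohH^{2k+1}$ is ``coniveau $\ge k$''. For $\cohH^5$ constructed as pushforwards of $\cohH^1$ of curves, this is exactly the situation: the classes come from the Jacobian of the positive-genus curve $\cR$. The Abel--Jacobi map gives a surjection from the algebraically trivial part of $\CH^3(\ocH)$ onto a non-zero abelian variety quotient of $J(\cR)$, and this is uncountable. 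So the most direct plan uses the explicit cycle $\iota_*(\text{point classes on }\cR\times\text{fixed cycle})$ and its correspondence with $J(\cR)$.

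Finally, $R^*(\ocHgnm)$ is countable, since it is finitely generated as a $\bQ$-vector space by \Cref{thm: fin_gen_taut_ring}. So it is a proper subset of the uncountable $\CH^*$. The main obstacle is carrying out the Bloch--Srinivas/Abel--Jacobi step rigorously on a stack rather than a variety. I would handle it by passing to the coarse space, which has quotient singularities and so satisfies Poincaré duality and purity rationally, or to a smooth projective finite cover, and checking that countability and non-vanishing of odd cohomology are both preserved under these rational identifications.
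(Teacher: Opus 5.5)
Your proposal is correct and is essentially the paper's argument. The paper combines $\cohH^5(\ocHgnm)\ne 0$ from \Cref{cor:nontrivialH5} with the general result \cite[Thm.~7.1]{calapa} that non-zero odd cohomology forces an uncountable Chow ring (your Bloch--Srinivas/Jannsen--Kimura step reconstructs this), and then uses the finite-dimensionality of $R^*(\ocHgnm)$ from \Cref{thm: fin_gen_taut_ring}, exactly as you do. One caution if you spell out the stack-to-variety reduction yourself: work on the coarse space, or with the motive of $\ocHgnm$ as a direct summand, because countability of $\CH^*(\ocHgnm)$ does not pass to an arbitrary non-Galois finite cover, so the ``up to taking invariants'' identification is not available there.
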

\begin{proof}
    Since the tautological ring~$R^*(\ocHgnm)$ is a finite-dimensional $\bQ$-vector space, this follows from the argument parallel to \cite[Cor.~1.6]{calapa}, since~$\CH^*(\ocHgnm)$ is uncountable by \cite[Thm.~7.1]{calapa}.
\end{proof}
While this corollary guarantees existence of uncountably many non-tautological classes in the Chow ring of~$\ocH$, it a priori does not say anything about non-tautological classes in the even degree cohomology of~$\ocH$, which we will discuss in the next section.

\section{Non-tautological even degree cohomology classes.}\label{section:nontautological_even}
Inspired by the construction in the previous section, 
we construct non-tautological cohomology classes in $\cohH^{2,2}(\ocH,\bC)\cap \cohH^4(\ocH,\bQ)$.

Consider an irreducible component~$D_\Gamma$ of the boundary divisor defined by the following level graph~$\Gamma$:
\begin{figure}[H]
  \centering
\begin{tikzpicture}[strata,baseline]
  \node[vertex] (1l) at (0,0) {$1$};
  \node[vertex] (G) at (-1.5,2) {$G$};
  \node[vertex] (gk) at (1.5,2) {$g_k$};
  \node at ($(G)!0.5!(gk)$) {$\ldots$};
  \node[vertex] (1r) at (-3,0) {$1$};
  \node[vertex] (hs) at (-4.5,2) {$h_s$};
  \node at ($(hs)!0.5!(G)$) {$\ldots$};
  \draw[thick] (1l) -- (G);
  \draw[thick] (1l) -- (gk);
  \draw[thick] (G) -- (1r);
  \draw[thick] (1r) -- (hs);
  \draw (1l.south) -- ++(0,-0.6)
        node[leglabel,anchor=north] {$z_1$};
  \draw (1r.south) -- ++(0,-0.6)
        node[leglabel,anchor=north] {$z_2$};
\end{tikzpicture}
\end{figure}
Here, as before, all the marked zeroes and poles except for~$z_1$ and~$z_2$ are assumed to be attached to vertices of top level, and moreover we assume all marked poles to be attached to the ``middle'' top level vertex with genus labeled $G=g_1=h_1$. The global residue conditions imposed at the bottom level by the top level vertices of genera $g_2,\dots,g_k$ are that there are no residues at the preimages of the nodes on the right bottom elliptic curve connecting to them, and thus by the residue theorem the right bottom vertex parameterizes a residueless genus~$1$ stratum $\cR_{1,1+k}(m_1,-a_1,\ldots,-a_k)$. Similarly, the bottom left vertex parameterizes a residueless stratum $\cR_{1,1+s}(m_2,-b_1,\ldots,-b_s)$. We will want both of these strata to have a connected component that is a curve of positive genus as provided by \Cref{prop:genusLT}. Then for non-zero classes
$$\alpha_1\in \cohH^1(\cR_{1,1+k}(m_1,-a_1,\ldots,-a_k)),\ \alpha_2\in \cohH^1(\cR_{1,1+s}(m_2,-b_1,\ldots,-b_s))\,,$$ 
pulling back $\alpha_1\otimes\alpha_2$ to~$B_\Gamma^\bot$ defines a non-tautological (by \Cref{thm: fin_gen_taut_ring}) class $\alpha\in \cohH^2(B_\Gamma)$, and moreover choosing $\alpha_1\in \cohH^{1,0}$ and $\alpha_2\in \cohH^{0,1}$ or vice versa yields $\alpha\in \cohH^{1,1}(B_\Gamma)$.
\begin{lem}\label{lem:nontaut_H2_boundary}
Suppose there exists a choice of
\begin{itemize}
    \item $G=g_1=h_1,g_2,\dots,g_k,h_2,\dots,h_s;$
    \item enhancements~$\kappa_e$ on all the edges;
    \item a partition of marked zeroes and poles $z_3,\dots, z_n$ among all top level vertices, such that all poles are on the middle vertex of genus~$G$,
\end{itemize} 
such that 
\begin{itemize}
    \item the above~$\Gamma$ is realized by a boundary stratum in~$\ocHgnm$;
    \item both bottom genus~$1$ residueless strata have a connected component of positive genus; 
    \item and furthermore~$\Aut(\Gamma)=1$.
\end{itemize}
Then there exists an irreducible boundary stratum~$D_\Gamma$ with this enhanced level graph, with $\cohH^2(D_\Gamma)\ne 0$. Furthermore there exists a nonhyperelliptic connected component~$\ocH\subset\ocHgnm$ such that $\RH^4(\ocH)\subsetneq \cohH^4(\ocH)$.
\end{lem}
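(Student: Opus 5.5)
The argument runs parallel to the proof of \Cref{lemma: nontrivial_h3}, replacing the degree~$1$ class by the product class $\alpha=p_\Gamma^*(\alpha_1\otimes\alpha_2)$. The realizability hypothesis gives an irreducible boundary divisor~$D_\Gamma$ with enhanced level graph~$\Gamma$. We choose it so that the two bottom vertices lie in the chosen positive genus connected components $\cR_1\subset\cR_{1,1+k}(m_1,-a_1,\dots,-a_k)$ and $\cR_2\subset\cR_{1,1+s}(m_2,-b_1,\dots,-b_s)$; such a point can be smoothed into the open stratum. Then $B_\Gamma^\bot$ contains a component $\cR_1\times\cR_2$, which is a product of two curves of positive genus. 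We pick $\alpha_1\in\cohH^{1,0}(\cR_1)$ and $\alpha_2\in\cohH^{0,1}(\cR_2)$, both nonzero, so that $\alpha_1\otimes\alpha_2\ne0$ in $\cohH^{1,1}(B_\Gamma^\bot)$ by K\"unneth. Since $\Aut(\Gamma)=1$, the map $c_\Gamma^\Gamma$ is an isomorphism. We take the same resolution $f_\Gamma\colon X_\Gamma\to B_\Gamma$ with $g_\Gamma\colon X_\Gamma\to D_\Gamma$ birational. The class $\alpha\coloneqq(g_\Gamma)_*f_\Gamma^*(1\otimes\alpha_1\otimes\alpha_2)\in\cohH^2(D_\Gamma)$ is then nonzero: $f_\Gamma^*$ is injective because $f_\Gamma$ is generically finite, and $(g_\Gamma)_*g_\Gamma^*=\mathrm{id}$ because $g_\Gamma$ is birational. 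This gives $\cohH^2(D_\Gamma)\ne0$. The hyperelliptic remark from \S\ref{section:nontrivial_H3} applies verbatim, since a boundary stratum with a positive genus residueless bottom cannot lie in the admissible cover image of $\overline\cM_{0,2g+2}$. Hence the connected component~$\ocH$ containing~$D_\Gamma$ is nonhyperelliptic.

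Next consider $\beta\coloneqq\iota_*\alpha\in\cohH^{2,2}(\ocH)\cap\cohH^4(\ocH)$. We claim $\beta$ is not tautological. Suppose instead that $\beta\in\RH^4(\ocH)$. By \Cref{cor: tautological_kunneth}, the K\"unneth decomposition of $(p_\Gamma)_*c_\Gamma^*\iota^*\beta$ on $B_\Gamma=B_\Gamma^\top\times B_\Gamma^\bot$ involves only tautological classes. Among these, only $\psi$-monomials and boundary classes appear, and these are of even type on each factor. In particular, the pieces in $\cohH^*(B_\Gamma^\top)\otimes\cohH^{2}(\cR_1\times\cR_2)$ lie in the span of $1,\ \psi$ and point classes on each curve $\cR_j$, so they contain no $\cohH^1\otimes\cohH^1$ part.

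On the other hand, $\iota^*\iota_*\alpha=c_1(N_{D_\Gamma})\cdot\alpha$. Pulling back by $c_\Gamma$ and pushing forward by $p_\Gamma$, this becomes a positive multiple of $(\nu_\Gamma^\top+\nu_\Gamma^\bot)\cdot(1\otimes\alpha_1\otimes\alpha_2)$. The term $\nu_\Gamma^\bot\cdot(\alpha_1\otimes\alpha_2)$ lands in $\cohH^4$ of a surface, and its component with a $\cohH^1\otimes\cohH^1$ factor can be handled separately. It is cleaner, however, to project onto the K\"unneth summand $\cohH^2(B_\Gamma^\top)\otimes\cohH^1(\cR_1)\otimes\cohH^1(\cR_2)$. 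The projection of $\nu_\Gamma^\top\otimes\alpha_1\otimes\alpha_2$ is nonzero exactly when $\nu_\Gamma^\top\ne0$ in $\cohH^2(B_\Gamma^\top)$. That this holds is the same nefness argument as in \Cref{lemma: nontrivial_h3}: $\psi_{e^+}$ is nef, and the set $\LG_{2,e}^{\Gamma,[0]}$ is nonempty, so $\nu_\Gamma^\top$ cannot vanish. This contradicts the tautological-only structure of the K\"unneth decomposition, so $\beta\notin\RH^4(\ocH)$.

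The main obstacle is making the K\"unneth/restriction step rigorous when $D_\Gamma$ is not smooth. There are three points to handle. First, we must check that \Cref{cor: tautological_kunneth} really prevents odd-odd components on the bottom factor; I would verify this through the generators of \Cref{thm: fin_gen_taut_ring} together with \cite[Prop.~8.1]{comoza}, noting that they restrict to combinations of $\psi$-monomials and boundary strata pulled back levelwise. Second, we must check that $p_{\Gamma*}c_\Gamma^*$ is compatible with the class $\alpha$ built via $X_\Gamma$; it agrees up to a nonzero constant over the open part, and the boundary corrections from the resolution are supported on boundary, which only contributes tautological-type factors. Third, since the normal bundle formula has two terms, I would isolate the $\cohH^2(B_\Gamma^\top)$-coefficient to avoid any cancellation with $\nu_\Gamma^\bot$.
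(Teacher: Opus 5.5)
The step that fails is the nonvanishing of $\alpha=(g_\Gamma)_*f_\Gamma^*(1\otimes\alpha_1\otimes\alpha_2)$. The identity $(g_\Gamma)_*g_\Gamma^*=\mathrm{id}$ only tells you that $g_\Gamma^*$ is injective and $(g_\Gamma)_*$ is surjective. What you actually need is that $(g_\Gamma)_*$ does not kill the particular class $f_\Gamma^*(1\otimes\alpha_1\otimes\alpha_2)$. In degree~$2$ the kernel of $(g_\Gamma)_*$ is typically nonzero: it contains the classes of $g_\Gamma$-exceptional divisors, which a resolution of indeterminacy generally produces. This is why the resolution trick sufficed in \Cref{lemma: nontrivial_h3}, where $\cohH^1$ is a birational invariant, but not here; the paper points this out explicitly.

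The repair is to argue on the open part. Since $g_\Gamma$ is proper, any class in $\ker(g_\Gamma)_*$ restricts to zero on $g_\Gamma^{-1}(D_\Gamma^\circ)$. There $g_\Gamma$ is an isomorphism and $f_\Gamma$ becomes the finite cover $p_\Gamma^\Gamma\circ(c_\Gamma^\Gamma)^{-1}$, whose pullback is injective. So it suffices to show $(1\otimes\alpha_1\otimes\alpha_2)|_{B_\Gamma^\circ}\neq0$. The paper does exactly this, via $\Gr_2^W$, and it works with $(c_\Gamma)_*(p_\Gamma)^*\alpha$ rather than the resolution so that \eqref{eq:cpp} applies directly. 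Alternatively, your second half, once made rigorous, gives a nonzero component of $(p_\Gamma)_*c_\Gamma^*\iota^*\iota_*\alpha$, which forces $\alpha\neq0$. But that requires the same comparison over $B_\Gamma^\circ$, so your first paragraph's justification must be replaced, not just supplemented. The overall push--pull strategy otherwise matches the paper. Isolating the K\"unneth summand with a $\cohH^2(B_\Gamma^\top)$ factor is a cleaner way to write its last step.

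Second, $B_\Gamma^\bot$ is not $\cR_1\times\cR_2$. The bottom level has two vertices, so $B_\Gamma^\bot$ is the product of the \emph{unprojectivized} residueless strata modulo a single $\bC^*$. It is therefore $3$-dimensional, as it must be for $\dim D_\Gamma=\dim\ocH-1$. It maps via $\overline f_{-1}$ onto $\overline\cR_1\times\overline\cR_2$, and the fibres, which record the relative scale of the two bottom vertices, are finitely covered by $\bP^1$. Hence your claim that $\nu_\Gamma^\bot\cdot(\alpha_1\otimes\alpha_2)$ lands in the top cohomology of a surface is false. The summand to project onto is $\cohH^2(B_\Gamma^\top)\otimes\cohH^2(B_\Gamma^\bot)$, with $\overline f_{-1}^*(\alpha_1\otimes\alpha_2)$ in place of $\alpha_1\otimes\alpha_2$.

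Your projection argument survives this correction. The $\nu_\Gamma^\bot$-term lies in $\cohH^0(B_\Gamma^\top)\otimes\cohH^4(B_\Gamma^\bot)$, and the boundary corrections multiplied by $\nu_\Gamma$ are tautological. However, the final contradiction now needs $\overline f_{-1}^*(\alpha_1\otimes\alpha_2)$ to be nonzero and not in $\RH^2(B_\Gamma^\bot)$. This is no longer K\"unneth on a product of two curves: you must control the fibre direction, for instance via \Cref{leray_hirsch} on the open part, where the toric boundary divisors account for the fibre classes.
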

\begin{proof}
We argue similarly to the proof of \Cref{lemma: nontrivial_h3}, as~$c_\Gamma^\Gamma$ is again an isomorphism. Thus we view $p_\Gamma^\Gamma\circ (c_\Gamma^\Gamma)^{-1}$ as a rational map $D_\Gamma\dashrightarrow B_\Gamma$ that restricts to a finite morphism  $p_\Gamma^\Gamma:D_\Gamma^\circ\rightarrow B_\Gamma^\circ$ on the open stratum. However, birational varieties do not necessarily have isomorphic second cohomology, so we need a more refined argument. Since~$p_\Gamma^\Gamma$ is a finite surjective morphism of smooth orbifolds, the pullback~$(p_\Gamma^\Gamma)^*$ induces an injection 
$$(p_\Gamma^\Gamma)^*:\Gr^W_2\,\cohH^2(B_\Gamma^\circ)\hookrightarrow \Gr^W_2\,\cohH^2(D_\Gamma^\circ)
$$ on the weight 2 component of the mixed Hodge structures (see \Cref{section: open_strata} for a review of how this works). Consequently, on the interior~$D_\Gamma^{s,\circ}$ we have $c_\Gamma|_{D_\Gamma^{s,\circ}}=c_\Gamma^\Gamma\circ q_\Gamma=q_\Gamma$ and $p_\Gamma = p_\Gamma^\Gamma\circ q_\Gamma$. It follows that for the class $\alpha\in \cohH^2(B_\Gamma)$ defined above we have the equality
\[
(c_\Gamma)_*(p_\Gamma)^*(\alpha|_{B_\Gamma^\circ})=(q_\Gamma)_*(q_\Gamma)^*(p_\Gamma^\Gamma)^*(\alpha|_{B_\Gamma^\circ})=\deg(q_\Gamma)(p_\Gamma^\Gamma)^*(\alpha|_{B_\Gamma^\circ})\neq 0
\]
in~$\cohH^2(D_\Gamma^\circ)$. Since $\Gr_2^W\,\cohH^2(D_\Gamma^\circ)$ is given by restriction of~$\cohH^2(D_\Gamma)$ onto~$D_\Gamma^\circ$, it follows that $(c_\Gamma)_*(p_\Gamma)^*\alpha$ is also non-trivial in~$\cohH^2(D_\Gamma)$.

Following as in the proof of \Cref{lemma: nontrivial_h3}  implies that $(\iota_\Gamma)_*(c_\Gamma)_*(p_\Gamma)^*\alpha$ is non-trivial in~$\cohH^4(\ocH)$. Indeed, $$(\iota_\Gamma)^*(\iota_\Gamma)_*(c_\Gamma)_*(p_\Gamma)^*\alpha=c_1(N_\Gamma)\cdot (c_\Gamma)_*(p_\Gamma)^*\alpha.$$ Since $(c_{\Gamma})^*c_1(N_\Gamma)=(p_\Gamma)^*\nu_\Gamma$ for some tautological class~$\nu_\Gamma$ on~$B_\Gamma$, we have by the projection formula
\[
    (p_\Gamma)_*(c_\Gamma)^*\left(c_1(N_\Gamma)\cdot (c_\Gamma)_*(p_\Gamma)^*\alpha\right)
    =\nu_\Gamma\cdot (p_\Gamma)_*(c_\Gamma)^*(c_\Gamma)_*(p_\Gamma)^*\alpha,
\]
Recalling that $p_\Gamma=p_\Gamma^\Gamma\circ q_\Gamma$ on the open stratum, by~\eqref{eq:cpp}, it follows that 
\[
  (p_\Gamma)_*(c_\Gamma)^*(c_\Gamma)_*(p_\Gamma)^*(\alpha|_{B_\Gamma^\circ})=  \deg(p_\Gamma^\Gamma)\cdot (\deg(q_\Gamma))^2\alpha|_{B_\Gamma^\circ}\,,
\] 
and thus on the compactification the class $(p_\Gamma)_*(c_\Gamma)^*(c_\Gamma)_*(p_\Gamma)^*\alpha$ is a scalar multiple of~$\alpha$ plus some linear combination of boundary divisors of~$\partial B_\Gamma$. Consequently, the K\"unneth decomposition of $$(p_\Gamma)_*(c_\Gamma)^*\left(c_1(N_\Gamma)\cdot (c_\Gamma)_*(p_\Gamma)^*\alpha\right)$$ contains a non-tautological class that is a multiple of~$\alpha$, so we obtain a non-tautological algebraic cycle class in~$\cohH^{2,2}(\ocH)$ by \Cref{cor: tautological_kunneth}. 

When $\mu=(g-1,g-1)$, the boundary divisor~$D_\Gamma$ is contained in one of the non-hyperelliptic components of the stratum~$\ocH_{g,2}(g-1,g-1)$, since the marked zeros cannot be conjugate under the hyperelliptic involution.
\end{proof}

To complete the proof of \Cref{thm:H4}, we now again need to show that for $m_1,m_2\ge 9$ such boundary strata are realizable in a given~$\ocHgnm$.

\begin{proof}[Proof of \Cref{thm:H4}]
The existence proof is parallel to the construction in the proof of \Cref{thm:H3}. Indeed, we put all marked zeroes and poles $z_3,\dots,z_n$ at the vertex of genus $G=g_1=h_1=g-4$ at the top level, and make $g_2=h_2=1$, with the corresponding two top level vertices containing no marked zeroes or poles. By the conditions on~$g$ and~$n$ the underlying dual graph is a dual graph of a stable curve. Then the bottom level vertices correspond to the residueless strata $\cR_{1,3}(m_1,-2,2-m_1)$ and $\cR_{1,3}(m_2,-2,2-m_2)$. The graph still has no automorphisms, as the two unmarked genus~$1$ top level vertices are connected to distinct marked bottom level vertices. For $m_1\ge m_2\ge 9$ both residueless strata have a component of positive genus, and we have thus satisfied the conditions of \Cref{lem:nontaut_H2_boundary} to show that $\RH^4(\ocH)\subsetneq \cohH^4(\ocH)$. 

For the growth order, we again simply replace one top level genus~$1$ vertex with an arbitrary partition of~$N$ into distinct genera, which by the last two paragraphs of the proof of \Cref{thm:H3} gives the same exponential lower bound for the growth order of such partitions. That the classes in~$\cohH^4$ obtained in this way via pushforward from distinct graphs~$\Gamma$ are independent follows from the same argument as in the proof of \Cref{claim:indepH3}. Indeed, as in the proof of \Cref{claim:indepH3}, the components of the boundary divisors given by the divisorial level graphs of the type we are now considering are pairwise disjoint (as the residueless strata appearing in the bottom level only admit degenerations of noncompact type), so the argument goes through verbatim.
\end{proof}

The construction of non-tautological classes in~$\cohH^4(\ocHgnm)$ given in the proof above requires $m_2\ge 9$. If~$n$ is large enough, we can of course construct non-tautological classes in~$\cohH^6$ from non-tautological classes in~$\cohH^4$ by colliding marked zeroes and using the Hard Lefschetz Theorem or the argument such as in the proof of \Cref{cor:nontrivialH5}. 

\begin{rem}
This construction can be still further generalized, to the case when $m_1\ge m_2\ge\dots \ge m_k\ge 9$ and $g\ge 2k+1$ (or $g=2k$ and $n\ge k+1$). Indeed, in this case we consider a graph~$\Gamma$ that is still vertical two-level, with a ``central'' top level vertex of genus~$G$ and valence~$k$ containing all marked poles and some zeroes, connected by one edge to each of the~$k$ genus~$1$ bottom level vertices. Each bottom level vertex has the corresponding~$z_j$ among $z_1,\dots,z_k$ attached, and another edge going up to a valence~$1$ top level vertex, which possibly contains  further marked zeroes. This will show that if $m_1\ge m_2\ge\dots \ge m_k\ge 9$, and $g\ge 2k+1$ (or $g=2k$ and $n\ge k+1$), then $\cohH^k(D_\Gamma)\ne 0$ and that thus~$\cohH^{k+2}(\ocH)$ is not purely tautological for some connected component~$\ocH$ of~$\ocHgnm$.
\end{rem}

Our next result will construct non-tautological classes in~$\cohH^6$ in further cases when neither of these arguments applies. Moving beyond the above constructions using boundary divisors, we consider a suitable irreducible component~$D_\Lambda$ of the codimension 2 boundary stratum with the following enhanced level graph~$\Lambda$:
\begin{figure}[H]
  \centering
\begin{tikzpicture}[strata,baseline]
  \node[vertex] (bottom) at (0,0) {$1$};
  \node[vertex] (gk) at (-1.5,2) {$g_k$};
  \node[vertex] (1) at (1.5,2) {$1$};
  \node at ($(gk)!0.5!(1)$) {$\ldots$};
  \node[vertex] (h1) at (0,4) {$h_1$};
  \node[vertex] (hs) at (3,4) {$h_s$};
  \node at ($(h1)!0.5!(hs)$) {$\ldots$};
  \draw[thick] (bottom) -- (gk);
  \draw[thick] (bottom) -- (1);
  \draw[thick] (1) -- (h1);
  \draw[thick] (1) -- (hs);

  \draw (bottom.south) -- ++(0,-0.6)
        node[leglabel,anchor=north] {$m_1$};
\end{tikzpicture}
\end{figure}
As before, we do not yet label the enhancements or mark the zeroes and poles $z_2,\dots,z_n$, but require none of them to be attached to either of the labeled genus~$1$ vertices. We moreover require all marked poles to be attached to the  irreducible component of genus~$h_s$ in the top level. In that case the differential on the mid-level genus~$1$ vertex, and the differential at the bottom vertex are still both going to be genus~$1$ residueless. If both of these genus~$1$ residueless strata contain a connected component of positive genus, let $\alpha_1,\alpha_2$ be some non-zero classes in $\cohH^{1,0}(B_\Lambda^{[-1]},\bC)$ and $\cohH^{0,1}(B_\Lambda^{[-2]},\bC)$. 

Then $1\otimes \alpha_1\otimes \alpha_2$ is a {\em non-tautological} class in $\cohH^{1,1}(B_\Lambda,\bC)$ by using the K\"unneth decomposition of cohomology of $B_\Lambda=B_\Lambda^{[0]}\times B_\Lambda^{[-1]}\times B_\Lambda^{[-2]}$ and \Cref{cor: tautological_kunneth}. If~$\Aut \Lambda=1$, then the same argument as in the proof of \Cref{lem:nontaut_H2_boundary} implies that $\alpha\coloneqq (c_\Lambda)_*(p_\Lambda)^*(1\otimes \alpha_1\otimes \alpha_2)$ is non-trivial in~$\cohH^{1,1}(D_\Lambda)$. We then claim that this class pushes forward to a non-trivial cohomology class on~$\ocH$.

\begin{lem}
If we can arrange a choice of~$g_j,h_j$, of the enhancements, and placement of marked zeroes and poles $z_2,\dots, z_n$, as required, such that the above enhanced level graph~$\Lambda$ is realized in a stratum~$\ocHgnm$, and if furthermore~$\Aut(\Lambda)=1$, then there exists a connected component of~$\ocH\subset\ocHgnm$ such that $\RH^6(\ocH)\subsetneq \cohH^{3,3}(\ocH)$.
\end{lem}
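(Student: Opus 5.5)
We follow the strategy of \Cref{lem:nontaut_H2_boundary}, now one codimension higher. Set $\alpha\coloneqq(c_\Lambda)_*(p_\Lambda)^*(1\otimes\alpha_1\otimes\alpha_2)\in\cohH^{1,1}(D_\Lambda)$; as explained before the statement, $\alpha$ is non-zero. Let $\iota_\Lambda\colon D_\Lambda\hookrightarrow\ocH$ be the inclusion. We claim that $\beta\coloneqq(\iota_\Lambda)_*\alpha\in\cohH^{3,3}(\ocH)$ is not tautological. Since the cohomology classes involved are algebraic (pure Hodge type), the non-zero $(1,1)$-part can be realized by a rational algebraic class in the usual way.

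To detect $\beta$, restrict back to the same stratum. By the self-intersection formula,
$$\iota_\Lambda^*\beta=c_2(N_\Lambda)\cdot\alpha=c_1(N_{\Lambda/\Gamma_1})\,c_1(N_{\Lambda/\Gamma_2})\cdot\alpha .$$
Here $\Gamma_1,\Gamma_2$ are the two divisorial undegenerations of $\Lambda$, and $D_\Lambda$ is a component of the transversal intersection $D_{\Gamma_1}\cap D_{\Gamma_2}$ by simple normal crossing away from the horizontal boundary. Pull back along $c_\Lambda$ and push forward along $p_\Lambda$. By \eqref{eq:general_normal_bundle} and the projection formula, the resulting class on $B_\Lambda$ is
$$(\nu^{[0]}_{1}+\nu^{[-1]}_{1}+\cdots)(\nu^{[-1]}_{2}+\nu^{[-2]}_{2}+\cdots)\cdot\bigl(c\,(1\otimes\alpha_1\otimes\alpha_2)+\text{boundary corrections}\bigr).$$
The factors $\nu$ are tautological divisor classes pulled back from the individual levels, and $c\neq 0$ comes from the degree computation of \Cref{lem:nontaut_H2_boundary}.

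Next, take the Künneth decomposition $B_\Lambda=B_\Lambda^{[0]}\times B_\Lambda^{[-1]}\times B_\Lambda^{[-2]}$. The levels $-1$ and $-2$ are curves and $\alpha_1,\alpha_2$ already have top degree there. Therefore every term in which a $\nu$-factor lives on level $-1$ or $-2$ is killed. The boundary corrections are multiples of boundary points of these curves, and boundary points are tautological; so in any corrected term the slots on levels $-1$ and $-2$ are no longer of the form $\alpha_1\otimes\alpha_2$.

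This leaves exactly the component $(\nu^{[0]}\cdot\nu'^{[0]})\otimes\alpha_1\otimes\alpha_2$, where $\nu^{[0]}$ is the level-$0$ part of $\nu_{\Lambda/\Gamma_1}$, which involves $\psi_{e^+}$ at the top edges. The second factor $\nu'^{[0]}$ is problematic. The normal bundle $N_{\Lambda/\Gamma_2}$, which smooths the lower passage, only has pieces on levels $-1$ and $-2$. Those pieces are killed, and then $\iota^*\beta$ would vanish, exactly as in the remark after \Cref{cor:nontrivialH5}.

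This is the main obstacle, and I would address it by changing the pairing. Push $\alpha$ forward to $\ocH$ and pull back not to $D_\Lambda$ but to the divisor $D_{\Gamma_2}$. Here $\Gamma_2$ is the undegeneration smoothing the upper passage, so that $D_\Lambda\subset D_{\Gamma_2}$ has normal bundle $N_{\Lambda/\Gamma_1}$ inside $D_{\Gamma_2}$. Then use excess intersection:
$$\iota_{D_{\Gamma_2}}^*\beta=(j)_*\bigl(c_1(N_{\Lambda/\Gamma_2})\cdot\alpha\bigr)+(\text{contributions from other components of }D_{\Gamma_2}\cap D_\Lambda).$$
The remaining task is to show that $c_1(N_{\Lambda/\Gamma_2})\cdot\alpha$, which lives on $D_\Lambda$ and is now a degree-$4$ class, has a non-tautological Künneth component. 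Since levels $-1,-2$ are one-dimensional, this amounts to showing that $\nu^{[0]}$, with the $\kappa_e\psi_{e^+}$ term plus effective boundary corrections, is non-trivial on $B^{[0]}_\Lambda$. This follows as in \Cref{lemma: nontrivial_h3}, since $\psi_{e^+}$ is nef and $\LG^{[0]}$ is non-empty.

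Finally, I would check two things. First, that $j_*$ from $D_\Lambda$ into $D_{\Gamma_2}$ preserves non-tautologicality: apply \Cref{cor: tautological_kunneth} on $B_{\Gamma_2}$, whose bottom level contains the residueless curve, restricting once more to $D_\Lambda$. Second, that the other intersection components do not cancel this term. They are disjoint by the noncompact-type argument of \Cref{claim:indepH3}. The conclusion is then drawn exactly as in \Cref{lem:nontaut_H2_boundary}.
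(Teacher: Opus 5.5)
Your proposal has a genuine gap, and it comes from a false premise. You assert that levels $-1$ and $-2$ of $\Lambda$ are curves, so that every $\nu$-factor on those levels is killed by $\alpha_1,\alpha_2$. This holds for level $-2$ but not for level $-1$.

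Besides the genus-$1$ residueless vertex, level $-1$ of $\Lambda$ contains the vertices $g_1,\dots,g_k$, which are joined only to the bottom vertex. In the realization used for \Cref{thm:H6} this is an unmarked genus-$1$ vertex; such vertices are what make the bottom level $\cR_{1,3}(m_1,-2,2-m_1)$. Hence $B_\Lambda^{[-1]}$ has dimension at least $3$. It carries the moduli of these vertices and, crucially, the toric direction that rescales the level-$(-1)$ vertices relative to one another.

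So $\nu_2^{[-1]}\cdot\alpha^{[-1]}$ is not zero for degree reasons. The obstruction you import from the remark after \Cref{cor:nontrivialH5} is absent here; in that remark level $-1$ really is a single residueless curve. The direct computation of $\iota_\Lambda^*\beta$ does work, but its key input is exactly what you never supply: $\nu_2^{[-1]}\cdot\alpha^{[-1]}\neq 0$.

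The paper obtains this from \Cref{leray_hirsch} applied to $f\colon (B_\Lambda^{[-1]})^\circ\to Y=\cR\times\prod_v\cH_v$:
\begin{itemize}
\item A moving curve in the toric fiber $F_{-1}$ meets some $D_\Delta$ with $\Delta\in\LG^\Lambda_{3,e_2}$ positively, so $\nu_2^{[-1]}$ is negative on it.
\item Hence $\nu_2^{[-1]}\cdot\alpha^{[-1]}$ has a non-zero component in $\cohH^2(F_{-1})\otimes\cohH^1(Y)$.
\item Then $\nu_1^{[0]}\otimes\nu_2^{[-1]}\alpha^{[-1]}\otimes\alpha^{[-2]}$ is a non-zero K\"unneth component, using $\nu_1^{[0]}\neq 0$ from the nefness argument you do give.
\item It lies in a different slot from the only other surviving term $1\otimes\nu_1^{[-1]}\nu_2^{[-1]}\alpha^{[-1]}\otimes\alpha^{[-2]}$, and \Cref{cor: tautological_kunneth} concludes.
\end{itemize}

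Your workaround cannot replace this step. First, the excess-intersection formula is mis-stated. Since $D_\Lambda\subset D_\Gamma$, one has exactly $\iota_{D_\Gamma}^*\beta=j_*\bigl(c_1(N_{D_\Gamma})|_{D_\Lambda}\cdot\alpha\bigr)$. Here $N_{D_\Gamma}|_{D_\Lambda}$ is the normal bundle of $D_\Lambda$ inside the \emph{other} divisor, not $N_{\Lambda/\Gamma}$, and there are no additional intersection components. Your labels for $\Gamma_1,\Gamma_2$ also swap halfway through.

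More seriously, you certify non-tautologicality of $j_*(\cdots)$ via \Cref{cor: tautological_kunneth} by ``restricting once more to $D_\Lambda$''. Equivalently, you would restrict $(j^\bot)_*(\alpha^{[-1]}\otimes\alpha^{[-2]})$ on $B_\Gamma^\bot$ to the corresponding boundary divisor. Either way this returns $j^*\iota_{D_\Gamma}^*\beta=\iota_\Lambda^*\beta=c_1(N_1)\,c_1(N_2)\cdot\alpha$, which is precisely the class you had just declared to vanish. Under your premise the argument is therefore circular. Once the premise is dropped, it collapses to the paper's argument and still needs the unproved non-vanishing of $\nu_2^{[-1]}\cdot\alpha^{[-1]}$.
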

\begin{proof}
Since~$\Lambda$ has no long edge, the group of ghost automorphisms of~$\Lambda$ is trivial by the discussion in \cite[\S~5.3]{chcomo}. Since~$\Aut(\Lambda)=1$ by assumption,~$c_\Gamma^\Gamma$ is again an isomorphism, and we again get a finite morphism $p_\Lambda^\Lambda:D_\Lambda^\circ\rightarrow B_\Lambda^\circ$ on the open stratum. Consequently, arguing the same way as in the proof of \Cref{lem:nontaut_H2_boundary}, we deduce that~$\alpha$ is a non-trivial class in~$\cohH^2(D_\Lambda)$. 

Denote by~$\iota_\Lambda$ the inclusion of the boundary stratum~$D_\Lambda$ into~$\ocH$. We then want to show that $(\iota_\Lambda)_*\alpha\in \cohH^{3,3}(\ocH)\setminus \RH^6(\ocH)$ (and in particular is non-zero).

As before, we look at $(\iota_\Lambda)^*(\iota_\Lambda)_*\alpha$, which is equal to $c_2(N_{D_\Lambda})\cdot\alpha$ by \cite[Lem.~6.1]{calapa}. We will check that $(p_\Lambda)_*(c_\Lambda)^*(\iota_\Lambda)^*(\iota_\Lambda)_*\alpha$ is non-tautological in $B_\Lambda$. Suppose $D_\Lambda\subset \left(D_{\Gamma_1}\cap D_{\Gamma_2}\right)$ for two vertical divisors~$D_{\Gamma_1}$ and~$D_{\Gamma_2}$ (with~$\Lambda$ obtained by degeneration of the top level of~$\Gamma_1$ and of the bottom level of~$\Gamma_2$). Since the intersection of these divisors is smooth by \cite[\S~5]{comoza}, it follows that $c_2(N_{D_\Lambda})= c_1(N_1)\cdot c_1(N_2)$, where $N_i=N_{D_\Lambda/D_{\Gamma_i}}$. The graphs~$\Gamma_i$ are as follows:
\begin{figure}[H]
    \centering
    \begin{subfigure}{0.4\textwidth}
        \centering
        \begin{tikzpicture}[strata,baseline]
  \node[vertex] (bottom) at (0,0) {$1$};
  \node[vertex] (gk) at (-1.5,2) {$g_k$};
  \node[vertex] (H) at ( 1.5,2) {$H$};
  \node at ($(gk)!0.5!(H)$) {$\ldots$};
  \draw[thick] (bottom) -- (gk);
  \draw[thick] (bottom) -- (H);
  \draw (bottom.south) -- ++(0,-0.6)
        node[leglabel,anchor=north] {$z_1$};
\end{tikzpicture}
\caption{$\Gamma_1$}
    \end{subfigure}
    \begin{subfigure}{0.4\textwidth}
    \centering
        \begin{tikzpicture}[strata,baseline]
  \node[vertex] (G) at (0,0) {$G$};
  \node[vertex] (h1) at (-1.5,2) {$h_1$};
  \node[vertex] (hk) at ( 1.5,2) {$h_k$};
  \node at ($(h1)!0.5!(hk)$) {$\ldots$};
  \draw[thick] (G) -- (h1);
  \draw[thick] (G) -- (hk);
  \draw (G.south) -- ++(0,-0.6)
        node[leglabel,anchor=north] {$z_1$};
\end{tikzpicture}
\caption{$\Gamma_2$}
    \end{subfigure}
    \caption{Divisors containing~$D_\Lambda$}
    \label{fig:divisors_containing_DLambda}
\end{figure}
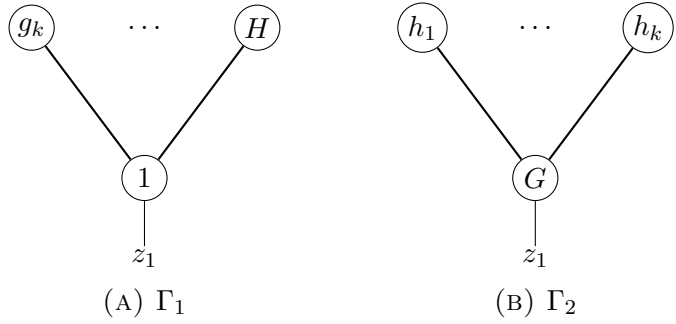
\noindent where $G=g_1+\ldots+g_k+2$
and $H=h_1+\ldots+h_s+1$.

To compute~$c_1(N_i)$ we choose an edge~$e_1$ connecting a vertex in level~$0$ of~$\Lambda$ to the level~$-1$ and an edge~$e_2$ connecting a vertex in level~$-1$ of~$\Lambda$ to the level~$-2$, and use \eqref{eq:general_normal_bundle}:
\[c_1(N_{1})=-\frac{\kappa_{e_1}}{\ell_{\Lambda,1}}(\psi_{e_1^+}+\psi_{e_1^-})-\frac{1}{\ell_{\Lambda,1}}\sum_{\Delta\in \LG_{3,e_1}^\Lambda}\sum_{D_\Delta}\ell_{\Delta,a_{\Delta,\Lambda}}D_\Delta,\]
\[c_1(N_{2})=-\frac{\kappa_{e_2}}{\ell_{\Lambda,2}}(\psi_{e_2^+}+\psi_{e_2^-})-\frac{1}{\ell_{\Lambda,2}}\sum_{\Delta\in \LG_{3,e_2}^\Lambda}\sum_{D_\Delta}\ell_{\Delta,a_{\Delta,\Lambda}}D_\Delta.\]
In particular, $(c_\Lambda)^*(c_1(N_i))$ is the pullback of the correspondingly defined divisor class~$\nu_i=\nu_i^{[-i+1]}+\nu_i^{[-i]}$ on~$B_\Lambda$, as mentioned in the discussion following \eqref{eq:normal_bundle}, and thus $p_\Lambda^*(\nu_1\cdot \nu_2) = c_\Lambda^* c_2(N_{D_\Lambda})$. An application of the projection formula then yields
\[(p_\Lambda)_*(c_\Lambda)^*(\iota_\Lambda)^*(\iota_\Lambda)_*\alpha=\nu_1\cdot\nu_2\cdot (p_\Lambda)_*(c_\Lambda)^*\alpha\,.\] 
By \eqref{eq:cpp} we obtain on the open stratum
$$
 (p_\Lambda)_*(c_\Lambda)^*(\alpha|_{B_\Lambda^\circ})=(\deg p_\Lambda^\Lambda) (\deg q_\Lambda)^2(1\otimes \alpha^{[-1]}\otimes\alpha^{[-2]})|_{B_\Lambda^\circ}\,,
$$ 
and thus on the closed stratum~$B_\Lambda$ the class $(p_\Lambda)_*(c_\Lambda)^*\alpha$ differs from $1\otimes \alpha^{[-1]}\otimes\alpha^{[-2]}$ by some linear combination of boundary divisors of~$B_\Lambda$. So, by \Cref{cor: tautological_kunneth}, to show that $\nu_1\cdot\nu_2\cdot(p_\Lambda)_*(c_\Lambda)^*\alpha$ is non-tautological, it is enough to show that $\nu_1\cdot\nu_2\cdot (1\otimes \alpha^{[-1]}\otimes\alpha^{[-2]})$ is non-trivial. Then, noting $\nu_2^{[-2]}\cdot \alpha^{[-2]}=0$ (since the bottom level is real 2-dimensional), we see that $\nu_1\cdot\nu_2\cdot (1\otimes \alpha^{[-1]}\otimes\alpha^{[-2]})$ is equal to
  \begin{align*}
    &\left(\nu_1^{[0]}+\nu_1^{[-1]}\right)\cdot \left(\nu_2^{[-1]}+\nu_2^{[-2]}\right)\cdot (1\otimes \alpha^{[-1]}\otimes\alpha^{[-2]})\\
    &=\left(\nu_1^{[0]}\otimes\nu_2^{[-1]}\cdot\alpha^{[-1]}\otimes\alpha^{[-2]}\right)+\left(1\otimes\nu_1^{[-1]}\cdot\nu_2^{[-1]}\cdot \alpha^{[-1]}\otimes\alpha^{[-2]}\right).
  \end{align*}
These two summands lie in 
$$\begin{aligned}\cohH^{1,1}&(B_\Lambda^{[0]})\otimes \cohH^{1,2}(B_\Lambda^{[-1]})\otimes \cohH^{1,0}(B_\Lambda^{[-2]})\quad\hbox{and}\\ \cohH^{0,0}&(B_\Lambda^{[0]})\otimes \cohH^{2,3}(B_\Lambda^{[-1]})\otimes \cohH^{1,0}(B_\Lambda^{[-2]})\,,
\end{aligned}$$
respectively, so it is enough to show that~$\nu_1^{[0]}\neq 0$ and $\nu_2^{[-1]}\cdot\alpha^{[-1]}\neq 0$, which would then imply that the first summand is non-zero. This is clear since~$\LG_{3,e_i}^\Lambda$ is non-empty (we can degenerate~$\Lambda$ keeping the underlying dual graph fixed, by simply moving the vertex containing the half edge~$e_i^+$ up), so $\nu_i^{[-i+1]}=0$ would imply~$\psi_{e_i}^+$ is anti-effective, which cannot happen since $\psi$-classes are nef.

The fact that $\nu_2^{[-1]}\cdot\alpha^{[-1]}\neq 0$ follows from \Cref{leray_hirsch}, by checking that its restriction to the open stratum is non-zero. Indeed,~$\alpha^{[-1]}$ restricts non-trivially to the interior~$\cR$ of the residueless stratum~$\overline{\cR}$ associated to the genus~$1$ vertex in level~$-1$, and thus to $$Y\coloneqq f(B_\Lambda^{[-1]})\cap\left(\prod_{v\in \Lambda^{(-1)}}\cH_v\right)=\cR\times \prod_v\cH_v$$ (note that only the genus~$1$ vertex of level~$-1$ is constrained by a residue condition), where~$f$ is the morphism $B_\Lambda^{[-1]}\rightarrow\prod_{v\in \Lambda^{(-1)}}\ocH_v$. Denote $(B_\Lambda^{[-1]})^\circ$ the preimage of~$Y$ in~$B_\Lambda^{[-1]}$ under the map~$f$. By \Cref{leray_hirsch}, one direct summand of $\cohH^3\left((B_\Lambda^{[-1]})^\circ\right)$ is equal to $\cohH^1(F_{-1})\otimes \cohH^2(Y)$, where~$F_{-1}$ denotes the homology fiber of~$f$, as in \Cref{leray_hirsch}. Then by \Cref{leray_hirsch}, the projection of $$\left.\left(\nu_2^{[-1]}\cdot\alpha^{[-1]}\right)\right|_{\left(B_\Lambda^{[-1]}\right)^\circ}\in \cohH^3\left((B_\Lambda^{[-1]})^\circ\right)$$ to $\cohH^1(F_{-1})\otimes \cohH^2(Y)$ is non-trivial. Note that~$\nu_2^{[-1]}$ restricts non-trivially to the fiber~$F_{-1}$ because if~$C\subset F_{-1}$ is a general moving curve in~$F_{-1}$, then $C\cdot D_\Delta>0$ for a degeneration~$\Delta$ of level~$-1$ of~$\Lambda$ (while keeping the underlying dual graph fixed) that makes the edge~$e_2$ longer (i.e. $\Delta\in \LG_{3,e_2}^\Lambda$), so we must have $\nu_2^{[-1]}\cdot C<0$.
\end{proof}
We now finish by checking when such choices are possible, so that this enhanced level graph~$\Lambda$ can be realized in the boundary of a given stratum of differentials.

\begin{proof}[Proof of \Cref{thm:H6}]
As in the previous proofs, while our construction is more general, allowing multiple top and mid-level vertices, for the existence part of the theorem, it is enough to consider the case when $s=k=2$ and $g_2=h_1=1$, while $h_2=g-4$, with all marked zeroes and poles $z_2,\dots,z_n$ placed at this higher genus top level vertex --- by the conditions on~$g$ and~$n$ this is the dual graph of  a stable curve. Then the bottom level residueless stratum is $\cR_{1,3}(m_1,-2,2-m_1)$ as in the previous divisorial boundary constructions, while the mid-level genus~$1$ residueless stratum is $\cR_{1,3}(m_1-4,-2,6-m_1)$. For $m_1\ge 13$ both of these residueless strata have a connected component of positive genus by \Cref{prop:genusLT}, and manifestly the automorphism group of the 3-level graph~$\Lambda$ is trivial. For the proof of the exponential asymptotic growth order we again can replace the middle or top level univalent vertices of genera $g_2=1$ and $h_1=1$ with collections of univalent vertices of distinct genera, yielding still higher asymptotic growth order. The argument for the linear independence of the classes thus constructed is the same as in the previous cases, as they come from disjoint boundary strata.
\end{proof}

\section{Meromorphic strata with $\cohH^2(\cHgnm)\ne 0$}\label{section: open_strata}
All previous results utilized the presence of non-trivial cohomology of some boundary strata to construct non-trivial and non-tautological cohomology classes on~$\ocHgnm$, by push-pull of non-trivial homology classes under the inclusion map of the boundary strata. As these manifestly come from the boundary, they do not yield anything on the open strata~$\cHgnm$ themselves, and here we briefly venture into this question. A systematic study of the boundary complex of~$\cHgnm\subset\ocHgnm$, including a detailed combinatorial definition of the associated enhanced level graph complex, will be presented elsewhere, while our results are mostly in a different range of degrees.

For a fixed connected component~$\ocH$ of~$\ocHgnm$, denote~$D^{[p]}$ the locus of points of multiplicity at least~$p$ in the boundary $D\coloneqq\ocH\setminus\cH$, and denote by~$\widetilde{D}^{[p]}$ its normalization. In particular, $D^{[0]}=\widetilde{D}^{[0]}=\ocH$ and~$D^{[1]}=D$, while in general $D^{[p]}=\cup_{\Gamma\in \LG_p}D_\Gamma$ and $\widetilde{D}^{[p]}=\sqcup_{\Gamma\in \LG_p}\sqcup_{D_\Gamma}\widetilde{D}_\Gamma$, where as usual,~$D_\Gamma$ is an irreducible component of the boundary stratum associated to level graph~$\Gamma$, $\widetilde{D}_\Gamma$ is the normalization of~$D_\Gamma$, and $\LG_p=\cup_{H+L=p} \LG_{L,H}$ denotes the collection of all enhanced level graphs that give codimension~$p$ strata in~$\ocH$. Recall that in our shorthand notation~$D_\Gamma$ 
tacitly stand for the various irreducible connected components of the boundary stratum with a given enhanced level graph.

By Deligne's theorem (see \cite[Ch.~19, Thm.~5.8]{acg2} for details), since~$\ocH$ is a normal crossing compactification of~$\cH$, there exists a spectral sequence abutting to~$\cohH^*(\cH)$ and with~$E_2=E_\infty$ such that:
\begin{equation}\label{eq:ss}
  E_1^{-p,q} =
    \begin{cases}
      \bigoplus_{\Gamma\in \LG_p}\bigoplus_{D_\Gamma}\cohH^{q-2p}(\widetilde{D}_\Gamma,\epsilon^p) & \text{for}\ p>0,\\
      \cohH^q(\ocH) & \text{for}\ p=0,\\
      0 & \text{for}\ p<0,
    \end{cases}       
  \end{equation}
Here~$\epsilon^p$ is a rank~$1$~$\bQ$-local system on~$\widetilde{D}^{[p]}$, which is trivial on~$\widetilde{D}_\Gamma$ if the boundary of~$\ocH$ has simple normal crossings along~$D_\Gamma$. In particular, since~$\ocH$ has simple normal crossing boundary away from the horizontal boundary divisors, it follows that~$\epsilon|_{\widetilde D_\Gamma}$ is trivial for any $\Gamma\in \LG_{*,\le 1}$, i.e.~for any level graph with at most one horizontal edge. 

For the mixed Hodge structure on~$\cohH^*(\cH)$, the weight~$m$ graded piece $\Gr_m^W\, \cohH^k(\cH)$ is then isomorphic to~$E_2^{-m+k,m}$. Since the spectral sequence \eqref{eq:ss} degenerates on the~$E_2$~page, it follows that
\begin{equation}
  \cohH^1(\cH)=E_2^{0,1}\oplus E_2^{-1,2},
\end{equation}
where 
\[
  E_2^{0,1}=\ker(E_1^{0,1}\rightarrow E_1^{1,1})/\im\,(E_1^{-1,1}\rightarrow E_1^{0,1})
\]
and 
\[
  E_2^{-1,2}=\ker(E_1^{-1,2}\rightarrow E_1^{0,2})/\im\,(E_1^{-2,2}\rightarrow E_1^{-1,2}).
\]
Since $E_1^{1,1}=E_1^{-1,1}=E_1^{-2,2}=0$, we obtain
\begin{equation}\label{eq:H2}
  \cohH^1(\cH)=\cohH^1(\ocH)\oplus \ker(\cohH^0(\widetilde{D})\rightarrow \cohH^2(\ocH)).
\end{equation}
In particular, the linear independence of the irreducible components of the boundary of a holomorphic stratum~$\ocH$, given by \Cref{prop:indep},  which we will prove in the following section, yields the vanishing of the kernel in \eqref{eq:H2}, and we thus obtain
\begin{cor}
 For any holomorphic stratum, $\cohH^1(\cHgnm)\cong \cohH^1(\ocHgnm)$.
\end{cor}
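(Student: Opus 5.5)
The plan is to read off the corollary directly from the decomposition \eqref{eq:H2}, which was obtained from Deligne's weight spectral sequence \eqref{eq:ss} for the normal crossing compactification $\cH\subset\ocH$. First I will recheck the degree bookkeeping. In total degree~$1$ the only possibly non-zero terms on the $E_1$ page are $E_1^{0,1}=\cohH^1(\ocH)$ and $E_1^{-1,2}=\cohH^0(\widetilde D,\epsilon^1)$. For $p=1$ the local system $\epsilon^1$ is trivial on every component $\widetilde D_\Gamma$ with $\Gamma\in\LG_{*,\le1}$, and that covers all divisors. So $E_1^{-1,2}=\bigoplus_{D_\Gamma}\bQ$, with one copy for each irreducible boundary divisor. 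The terms $E_1^{1,1}$, $E_1^{-1,1}$ and $E_1^{-2,2}$ vanish for degree reasons: they would require cohomology in negative degree, or $E_1^{1,*}=0$ since $p<0$. Since the sequence degenerates at $E_2$, this gives $\cohH^1(\cH)\cong\cohH^1(\ocH)\oplus\ker\bigl(\cohH^0(\widetilde D)\to\cohH^2(\ocH)\bigr)$, as in \eqref{eq:H2}.

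The second step is to identify the differential $d_1\colon E_1^{-1,2}\to E_1^{0,2}$. In Deligne's construction this map is the Gysin map, a signed sum of the pushforwards $(\iota_{D_\Gamma})_*\colon \cohH^0(\widetilde D_\Gamma)\to\cohH^2(\ocH)$. It therefore sends the generator $1_{D_\Gamma}$ to the fundamental class $[D_\Gamma]$ (for the orbifold/stack, up to a non-zero rational multiple, which does not affect the kernel). So the kernel is exactly the space of rational linear relations among the classes of the irreducible boundary divisors in $\cohH^2(\ocH)$.

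The third step applies \Cref{prop:indep}. For a holomorphic stratum the irreducible boundary components are linearly independent in $\cohH^2(\ocHgnm)$, so the kernel vanishes and $\cohH^1(\cH)\cong\cohH^1(\ocH)$. This holds for each connected component, hence for $\cHgnm$. The isomorphism is realized by the restriction map, since $E_2^{0,1}$ is the weight-$1$ part, i.e. the image of $\cohH^1(\ocH)$.

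The only substantive input is \Cref{prop:indep}, proved in the appendix, and the main care point is the identification of $d_1$ with the Gysin map in the stack setting. For that I would pass to a finite cover by a variety, or use rational coefficients on the coarse space, so that Deligne's theorem as in \cite{acg2} applies verbatim.
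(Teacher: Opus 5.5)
Your proposal is correct and is essentially the paper's argument: the weight spectral sequence for $\cH\subset\ocH$ gives \eqref{eq:H2}, and the differential $E_1^{-1,2}\to E_1^{0,2}$ is the Gysin map, whose kernel is the space of rational linear relations among the classes of the irreducible boundary divisors. \Cref{prop:indep} shows this kernel vanishes, and your remarks that the isomorphism is induced by restriction and that Deligne's theorem should be applied with rational coefficients on the stack are accurate points the paper leaves implicit.
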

We will now see that \Cref{prop:indep} fails already for some genus~$1$ meromorphic strata, which in particular implies that in those cases $\cohH^1(\ocHgnm)\ne \cohH^1(\cHgnm)$.
\begin{rem}\label{rem:no_exceptional_genus1}
Suppose $g=1$ and $\mu=(m_1,\ldots,m_n)\ne (0,\dots,0)$, with $n\ge 3$ (the case of $n=2$ is classical). Then forgetting the first marked point yields a generically finite morphism $f:\ocH_{1,n}(\mu)\rightarrow \overline{\cM}_{1,n-1}$ of degree~$m_1^2$. For any $(E,z_2,\ldots,z_n)\in \cM_{1,n-1}$, the fiber $f^{-1}(E,z_2,\ldots,z_n)$ consists of exactly~$m_1^2$ points $(E,\omega,z_1,\ldots,z_n)$ with~$z_1$ satisfying $m_1z_1=-\sum_{i\ge 2}m_iz_i$ (if such~$z_1$ coincides with any other~$z_i$, we have a nodal curve with a rational tail and the corresponding multi-scale differential). 

Each irreducible boundary divisor of~$\ocH_{1,n}(\mu)$ whose image in~$\overline{\cM}_{1,n}$ under the map~$\pi$ forgetting~$\omega$ is contained in the divisor $\delta_{0,\{1,i\}}\subset\partial\overline{\cM}_{1,n}$ (where the nodal curve contains a rational curve with points~$z_1$ and~$z_i$, attached at a point~$q$ to the elliptic curve~$E$ containing all other marked points), is not contracted under the map~$f$ forgetting~$z_1$. Indeed, the condition $m_1z_1+\sum_{j\ge 2}m_jz_j=0$ on this boundary divisor specializes on~$E$ to $(m_1+m_i)q+\sum_{j\ne 1,i}m_jz_j=0$, while the image under~$f$ is $(E,q,m_2,\dots,\widehat m_i,\dots,m_n)$. Unless $m_1+m_i=0$ and all other~$m_j$ are equal to zero, the image of such a divisor under~$f$ is a hypersurface in~$\cM_{1,n-1}$, so such a divisor is not contracted. However, if $\mu=(a,0,\dots,0,-a)$ (recall that $m_1\ge\dots\ge m_n$ so this is the only case), then~$\ocH_{1,n}(\mu)$ simply does not have such a boundary divisor where~$z_1$ and~$z_n$ collide, by the global residue condition.

Thus the images of all exceptional divisors of~$f$ are contained in the boundary of~$\overline\cM_{1,n-1}$. This fails in general for $g\ge 2$: e.g.~the map~$\ocH_{3,3}(2,1,1)\rightarrow \overline\cM_{3,1}$ defined by forgetting the two simple zeros has an exceptional divisor whose image is the codimension 2 locus in~$\overline\cM_{3,1}$ parameterizing~$(C,z_1)$, where~$C$ is hyperelliptic and~$z_1$ is a Weierstra{\ss} point.
\end{rem}
We use this remark to construct genus~$1$ strata where \Cref{prop:indep} fails.
\begin{rem}
Forgetting the simple zero induces an open immersion $$\cH_{1,3}(a,1,-a-1)\hookrightarrow \cM_{1,2},$$ whose image is the complement of the union $$\pi\left(\cH_{1,2}(a+1,-a-1)\right)\cup\pi\left(\cH_{1,2}(a,-a)\right),$$ where~$\pi$ still denotes the forgetful map $\pi:\cHgnm\to\cM_{g,n}$. Indeed, the only points missed when forgetting the simple zero are where this zero collided either with the other zero or with the pole.

Recall that~$\cohH^2(\overline\cM_{1,2})$ is generated by boundary divisors (see e.g. \cite[Ch.~19, Thm.~4.1]{acg2}). We thus write expressions for these two $\pi$-images as linear combinations of boundary divisors of~$\overline\cM_{1,2}$. The boundary divisors of $\ocH_{1,3}(a,1,-a-1)$ corresponding to these collisions of the simple zero are isomorphic to $\ocH_{1,2}(a+1,-a-1)$ and some connected components of $\ocH_{1,2}(a,-a)$. Pulling back such relations, we obtain expressions for them as linear combinations of some boundary divisors of $\ocH_{1,3}(a,1,-a-1)$ that are pullbacks of boundary divisors of~$\overline\cM_{1,2}$, plus some exceptional divisors of the map $\pi:\ocH_{1,3}(a,1,-a-1)\to\overline{\cM}_{1,3}$. These are then linear relations among the boundary divisors in~$\ocH$, thereby implying that~$\cohH^1(\cH)\neq 0$, by \eqref{eq:H2}.
\end{rem}
A systematic study of~$\cohH^1(\ocH)$ and~$\cohH^1(\cH)$ is being undertaken in \cite{CGMP-first}. Additionally, Dawei Chen has also told us about examples of higher genus meromorphic strata where the irreducible components of the boundary divisor are not linearly independent. 

\smallskip
We now further examine the spectral sequence \eqref{eq:ss} to deduce some results on~$\cohH^2(\cH)$.
By degeneration on the $E_2$~page, we compute
\begin{equation}\label{eq: second cohomology}
  \cohH^2(\cH)= E_2^{0,2}\oplus E_2^{-1,3}\oplus E_2^{-2,4},
\end{equation}
where 
$$
\begin{aligned}
\Gr_2^W\, \cohH^2(\cH)=E_2^{0,2}\, \ &=\ker(E_1^{0,2}\rightarrow E_1^{1,2})/\im(E_1^{-1,2}\rightarrow E_1^{0,2}),\\
\Gr_3^W\, \cohH^2(\cH)=E_2^{-1,3}&=\ker(E_1^{-1,3}\rightarrow E_1^{0,3})/\im(E_1^{-2,3}\rightarrow E_1^{-1,3}),\\
\Gr_4^W\, \cohH^2(\cH)=E_2^{-2,4}&=\ker(E_1^{-2,4}\rightarrow E_1^{-1,4})/\im(E_1^{-3,4}\rightarrow E_1^{-2,4}).
\end{aligned}
$$
Since $E_1^{1,2}=E_1^{-2,3}=E_1^{-3,4}=0$, these three graded pieces of the mixed Hodge structure on~$\cohH^2(\cH)$ are
\begin{equation}\label{eq:GrH2}
\begin{aligned}
\Gr_2^W\, \cohH^2(\cH)&=\coker(\cohH^0(\widetilde{D})\rightarrow \cohH^2(\ocH))\\
\Gr_3^W\, \cohH^2(\cH)&=\ker(\cohH^1(\widetilde{D})\rightarrow \cohH^3(\ocH))\\
\Gr_4^W\, \cohH^2(\cH)&=\ker(\cohH^0(\widetilde{D}^{[2]},\epsilon^2)\rightarrow \cohH^2(\widetilde{D})).
\end{aligned}
\end{equation}

With this we are now ready to demonstrate that many open genus~$1$ strata have non-trivial~$\cohH^2$.
\begin{proof}[Proof of \Cref{prop:H2mer}]
We first prove that $\Gr_4^W \cohH^2(\cH_{1,n}(\mu))\ne 0$ if $n\ge 3$ and $m_k=\pm 1$ for some~$k$. Then the stratum is connected, and we write~$\cH$ for it. Forgetting the~$k^{\text{th}}$ marked point yields a birational morphism $f:\ocH\rightarrow \overline\cM_{1,n-1}$. Let~$\Gamma_i$ be the level graph with one edge and two vertices, one of genus~$0$, containing~$z_k$ and~$z_i$, and the other of genus~$1$, containing all other marked zeroes and poles. The level structure on this graph is determined by the sign of~$\kappa=m_k+m_i+1$, and we recall that such a boundary divisor does not exist only if $(k,i)=(1,n)$ and $\mu=(a,0,\dots,0,-a)$, but then by assumption $a=1$ and this case does not appear. Then every irreducible component~$D_{\Gamma_i}$ is a boundary divisor such that $f(D_{\Gamma_i})$ intersects the interior~$\cM_{1,n-1}$: in fact $f(D_{\Gamma_i})\cap \cM_{1,n-1}=\pi(\cH(\mu_i))$ for 
$$\mu_i\coloneqq(m_1,\ldots,\widehat m_i,\dots,m_k+m_i,\dots,\widehat m_k,\dots,m_n).$$ 
For the argument that follows, we need at least two indices~$i$ such that $\cH(\mu_i)\ne \emptyset$, so this excludes the case $\mu=\pm(1,1,-2)$. Thus by \Cref{rem:no_exceptional_genus1} there is no $f$-exceptional divisor whose image is contained in $f(D_{\Gamma_i})\cap\cM_{1,n-1}$, and since~$f$ is birational and~$f(D_{\Gamma_i})$ is an irreducible divisor, it follows that as a set $f^{-1}(f(D_{\Gamma_i}))$ is the union of~$D_{\Gamma_i}$ and some boundary divisors of~$\ocH$ that are $f$-exceptional.

Since the cohomology $\cohH^2(\overline\cM_{1,n-1})$ is generated by the boundary divisors, the divisor class of~$f(D_{\Gamma_i})$ can be written as a $\bQ$-linear combi\-nation of boundary divisors in~$\overline\cM_{1,n-1}$ such that the coefficient of $\delta_{irr}=12\lambda$ in this linear combination is non-zero (see e.g.~\cite[Prop. 3.1]{chencoskun}). Pulling back such an expression to~$\ocH$ yields $f^*(f(D_{\Gamma_i})=D_{\Gamma_i}+F=\sum_\Gamma a_\Gamma^i D_\Gamma$, where~$F$ is some linear combination of $f$-exceptional divisors, and the sum on the right is only over those irreducible components of the boundary of~$\ocH$ whose $f$-images in~$\overline\cM_{1,n-1}$ are contained in the boundary~$\partial\overline\cM_{1,n-1}$ (since by \Cref{rem:no_exceptional_genus1} there is no exceptional divisor in~$\ocH$ whose $f$-image intersects the interior~$\cM_{1,n-1})$). Furthermore,  the coefficient of the component of horizontal boundary~$D_h^{irr}$ parameterizing differentials on irreducible rational nodal curves is non-zero in this expression. Additionally, since all~$D_{\Gamma_j}$ are not $f$-exceptional, each coefficient~$a_{\Gamma_j}^i$ is zero. Without loss of generality, assume $k\ne 1,2$. Then the manifestly non-zero (since $D_{\Gamma_2}\cdot D_h^{irr}\ne 0$) class
    \[D_{\Gamma_1}\cdot D_{\Gamma_2}-\sum_\Gamma a_\Gamma^1 D_\Gamma\cdot D_{\Gamma_2}\in \cohH^0(\widetilde{D}^{[2]},\epsilon^2)\]
pushes forward to zero in~$\cohH^2(\widetilde{D})$, and thus by \eqref{eq:GrH2} gives a non-trivial class in $\Gr_4^W\, \cohH^2(\cHgnm)$.

\smallskip
Next, we deal with the component~$\cH$ of rotation number~$1$ for the stratum with $n\ge 3$ and $d\coloneqq\gcd(m_1,\ldots,m_n)\ge 2$. Consider the morphism $h:\cH\rightarrow \cX_1(d)$ to the classical modular curve~$\cX_1(d)$ (i.e.~the primitive component of~$\cH(d,-d)$) given by $$(E,\omega,z_1,\ldots,z_n)\mapsto \left(E,\sum_i \frac{m_i}{d}z_i,\sum_i m_i z_i\right).$$
This map extends to the compactification as $\overline h:\ocH\rightarrow\overline \cX_1(d)$. Indeed, the Abel-Jacobi map extends to the locus where the underlying elliptic curve is of compact type, so that the addition map then simply happens on the genus~$1$ component, forgetting all rational components. Further\-more, the locus in~$\overline\cX_1(d)$ where~$E$ degenerates to a rational nodal curve, or to a cycle of rational curves, is a finite set of points, and the image of the locus in~$\ocH$ where~$E$ is not of compact type has to be contained in this finite set.

For $d\ge 2$, the modular curve $\overline\cX_1(d)$ has at least two distinct boundary points $\alpha_1,\alpha_2$ that map to the boundary point of~$\overline{\cM}_{1,1}$, that is to the class of the rational nodal curve. Then the equality $h^{*}(\alpha_1)=h^*(\alpha_2)$ in~$\cohH^2(\ocH)$ yields a non-trivial relation $\sum_{\Gamma}a_\Gamma D_\Gamma=0$ between the components of the boundary divisor parameterizing curves of non-compact type in~$\ocH$. So if~$D_\Lambda$ is a component of~$\partial \ocH$ parameterizing curves of compact type, then $\sum_\Gamma a_\Gamma D_\Gamma\cdot D_\Lambda\in \cohH^0(\widetilde{D}^{[2]},\epsilon^2)$  pushes forward to zero in~$\cohH^2(\widetilde{D})$ and thus gives a non-trivial class in $\Gr_4^W\, \cohH^2(\ocH)$.

\smallskip
Finally, we prove that $\Gr_3^W \cohH^2(\cH_{1,3}(a+1,-1,-a))\ne 0$ for $a=10$ or for $a\ge 12$. Since $a$ and $a+1$ are coprime, the stratum $\cH_{1,3}(a+1,-1,-a)$ is connected, and we write~$\cH$ for it. Then, as before,~$\ocH$ is birational to~$\cM_{1,2}$, by forgetting the simple pole. Thus the coarse moduli space of~$\ocH$ is a rational surface, which implies that $\cohH^1(\ocH)=\cohH^3(\ocH)=0$ (see \cite{kollarfundamentalgroup}). 

To construct a non-zero class in $\Gr_3^W\, \cohH^2(\cH)$, consider the boundary divisor corresponding to the following level graph:
    \begin{figure}[H]
  \centering
  \begin{tikzpicture}[strata,baseline]
  \node[vertex] (1) at (0,0) {$1$};
  \fill (0,2) circle (5pt);
  \coordinate (top) at (0,2);
  \draw[thick] (1) -- (top);
  \draw (top) -- ++(-0.6,0.8)
        node[leglabel,anchor=east] {$z_2\ (m_2=-1)$};
  \draw (top) -- ++(0.6,0.8)
        node[leglabel,anchor=west] {$z_3\ (m_3=-a)$};
  \draw (1.south) -- ++(0,-0.6)
        node[leglabel,anchor=north] {$z_1\ (m_1=a+1)$};
\end{tikzpicture}
\end{figure}
\noindent The bottom level vertex parameterizes $\ocH_{1,2}(a+1,-a-1)$. For~$a$ as indicated, the primitive component~$D_\Gamma$ of this  stratum  is the modular curve of level~$a+1$, which has positive genus, so that $\cohH^1(D_\Gamma)\neq 0$. Since~$\cohH^3(\ocH)=0$, it follows from \eqref{eq:GrH2} that $\Gr_3^W \,\cohH^2(\cH)\neq 0$.
\end{proof}
We recall that for $\mu=(0,\dots,0)$ the stratum $\cH_{1,n}(0,\dots,0)=\cM_{1,n}$ has trivial second cohomology \cite[Ch.~19, Thm.~5.1]{acg2}. 

However, the argument above fails for strictly holomorphic strata (of which there are none in genus~$1$). In fact, we have already seen in \S 4 that for a divisor~$D_\Gamma$ with two levels, $\cohH^1(B_\Gamma^\bot)$ usually maps injectively into~$\cohH^3(\ocH)$.

We will now build upon this understanding of the mixed Hodge structure on the open strata, and use the linear independence of the boundary divisors, which is \Cref{prop:indep} proven in the appendix, to obtain \Cref{prop:H2hol}. That is, we now prove that the weight 4 piece of the second cohomology of holomorphic strata is zero. 

\begin{proof}[Proof of \Cref{prop:H2hol}]
We aim to show that $\Gr_4^W\,\cohH^2(\cH)=0$ for any connected component~$\cH$ of any strictly holomorphic open stratum~$\cHgnm$. Recall  expression \eqref{eq:GrH2} for $\Gr_4^W\,\cohH^2(\cH)$. As discussed above, the local system~$\epsilon^2$ on codimension 2 boundary strata can only be non-trivial on~$D_\Lambda$ if $\Lambda\in \LG_{0,2}$, that is if~$\Lambda$ has one level and two horizontal edges. Since the horizontal boundary divisor~$D_h$ of any strictly holomorphic open stratum is irreducible, any such~$D_\Lambda$ is contained in the self-intersection of the irreducible divisor~$D_h$, and thus~$\epsilon^2$ is {\em non}-zero on any such~$D_\Lambda$. But then $\cohH^0(\widetilde D_\Lambda,\epsilon^2)=0$ for any $\Lambda\in \LG_{0,2}$, and thus we only have to deal with enhanced level graphs that have at most one horizontal edge, whence~$\epsilon^2$ is trivial, and we just drop it from the notation. Suppose then for contradiction that
\[
 \beta\coloneqq \sum_{D_\Lambda:\Lambda\in \LG_{2,0}\cup \LG_{1,1}}a_{D_\Lambda}D_\Lambda
\]
gives a non-zero element of $\Gr_4^W\,\cohH^2(\cH)$, where we are summing over irreducible components of the strata for which the enhanced level graph either has three levels and no horizontal edges, or has two levels and one horizontal edge. We aim to show that if~$\beta$ lies in the kernel of the map to~$\cohH^2(\widetilde D)$, then it is the trivial linear combination.

\textit{Step 1:} If~$\Lambda$ is a (possibly horizontal) degeneration of the top level of a divisorial level graph~$\Gamma$ with unique vertex on the top level then~$a_{D_\Lambda}=0$ since the top level of~$\Gamma$ is a holomorphic stratum. Indeed, given a vertical divisor~$D_\Gamma$, we have $(p_\Gamma)_*(c_\Gamma)^*\beta = (\pi^\top)^*\alpha^\top+(\pi^\bot)^*\alpha^\bot$, where $\pi^\top,\pi^\bot$ are projections of $B_\Gamma$ to $B_\Gamma^\top,B_\Gamma^\bot$, respectively. So if $(p_\Gamma)_*(c_\Gamma)^*\beta=0$ then $(\pi^\top)^*\alpha^\top=0$. Then the linear independence of boundary divisors in~$B_\Gamma^\top$ implies~$a_{D_\Lambda}=0$.

\textit{Step 2:} Next, we deal with $\Lambda\in \LG_{1,1}$, so that~$D_\Lambda$ is then contained in the horizontal boundary divisor~$D_h$, whose normalization is isomorphic to $\ocH_{g,n}(\mu,-1,-1)/S_2$. Note that~$D_\Lambda$ are then irreducible boundary divisors in~$\ocH_{g,n}(\mu,-1,-1)$ such that either the top level has more than two vertices or the simple poles are contained in the bottom level (the case that the top level has unique vertex and also has both simple poles is precluded by Step 1). If the top level of~$\Lambda$ has unique vertex then it is also holomorphic, then an argument analogous to part (2) of the proof of \Cref{prop:indep} implies~$a_{\Lambda,i}=0$. Analogously, if the top level of~$\Lambda$ has more than one vertex, then the argument analogous part (3) of the proof of \Cref{prop:indep} implies~$a_{\Lambda,i}=0$.

\textit{Step 3:} Now we look at $\Lambda\in \LG_{2,0}$ such that~$D_\Lambda$ is contained in a vertical boundary divisor~$D_\Gamma$ where~$\Gamma$ has at least two vertices in the top level. As in the proof of \Cref{prop:indep}, we can proceed by induction on the number of vertices in the top level of~$\Lambda$. If~$\Lambda$ has unique top level vertex, we proceed as in part (2) of the proof of \Cref{prop:indep}, and if~$\Lambda$ has more that one top level vertex, we proceed as in part (3) of the proof.
\end{proof}
\begin{rem}
    Based on \Cref{claim:indepH3}, one might wonder if also the weight three part $\Gr_3^W\,\cohH^2(\cH)$ is zero for any holomorphic stratum~$\cH$. However, we are unable to deduce this for two reasons. First, while $\cohH^1(B_\Gamma^{[-1]})$ injects into~$\cohH^3(\ocH)$ for various divisorial level graphs~$\Gamma$, showing the same for~$\cohH^1(B_\Gamma^{[0]})$ can be tricky (though we do not know an example of an open holomorphic stratum with non-trivial first cohomology). The second difficulty is that since in general $D_\Gamma$ and $B_\Gamma$ are only related by a finite correspondence and may not be birational, it is possible that $\cohH^1(B_\Gamma)\subsetneq \cohH^1(D_\Gamma)$. So even if $\cohH^1(B_\Gamma^{[0]})=0$, our argument is insufficient to conclude that~$\cohH^1(D_\Gamma)$ injects into~$\cohH^3(\ocH)$.
\end{rem}

\section{Proof of \Cref{prop:genusLT}}\label{proof_prop:genusLT}
The proof of this proposition is by obtaining suitable upper bounds for the formulas for Euler characteristics obtained by Lee and Tahar \cite{leetahar}, and evaluating the numbers in a short list of cases. As this is a combinatorial argument and the techniques are disjoint from the rest of the paper, we have moved this argument to a separate section.

We start by recalling Lee and Tahar's explicit formula for the  Euler characteristic of a residueless stratum $\cR=\cR_{1,1+p}(a,-b_1,\dots,-b_p)$, by which they and we always mean the disconnected union of all its connected components. While their main focus is on the orbifold Euler characteristic (and we note that the orbifold structure they use is induced by the natural projective structure and not by the multi-scale compactification), we want the existence of a connected component of positive genus. For this we will need to compare the manifold Euler characteristic of the coarse moduli space (which they compute) to~$2$ times the number of connected components of~$\cR$, computed in \cite{lee}.

The case $p=1$ is classical, being that of modular curves, see for example \cite{mazur2}. The case $p=2$ will be more complicated combinatorially due to some extra orbifold points, which result in a more elaborate formula in \cite{leetahar}. We thus first deal with the case of at least 3 poles, $p\ge 3$.

Indeed, \cite[eq.~(1.2)]{leetahar} \footnote{Everywhere, we are using the corrected version of that paper, which had a correction to an inaccuracy in dealing with cusps in this formula}, for $p\ge 3$ gives
\begin{equation}\label{eq:chiR}
\chi\left(\cR\right)= \frac{p!B}{48}(A-a^2-4a-4)+\varepsilon, 
\end{equation}
where $a=\sum_i b_i$, $A=\sum_i (b_i)^2$ and $B=\prod_i (b_i-1)$, and $\varepsilon=0$ unless $p=3$, and $b_1,b_2,b_3$ are all even, in which case it is $\varepsilon=-1/2$. 

Furthermore, the Euler characteristic of the compactified residueless stratum is given for $p\ge 3$ by \cite[eq.~(1.3)]{leetahar}:
\begin{equation}\label{eq:chioR}
\chi\left(\overline{\cR}\right)= \chi\left(\cR\right)+\frac12 p!B
+\sum_{t=1}^p \sum_{\tau\in S_p}\sum_{C_i=1}^{b_i-1} \frac{1}{2t} \gcd \left( \sum_{i=1}^t C_{\tau(i)},\sum_{i=1}^t b_{\tau(i)} \right)+\varepsilon_0
\end{equation} 
where the (explicitly given) correction term~$\varepsilon_0$ can only be non-zero for some cases of $b_1,b_2,b_3$ for $p=3$, and is then between~$0$ and~$3$. Here the notation $\sum_{C_i=1}^{b_i-1}$ means that the sum over~$C_i$ is taken for each $i=1,\dots,p$, and~$\tau(i)$ denotes the images under the permutation~$\tau$. \footnote{We note that the original version of \cite{leetahar} contained an inaccuracy in the computation of cusps and their stabilizer, so that in particular the original formula gave half-integers for $p=3$ and even $b_1,b_2,b_3$. We are using the final corrected formula, and thank Lee and Tahar for detailed discussions of these issues}

We now observe simply that
$$
\sum_{C_1=1}^{b_1-1}\dots\sum_{C_p=1}^{b_p-1}\gcd \left( \sum_{i=1}^t C_{\tau(i)},\sum_{i=1}^t b_{\tau(i)} \right)\le \sum_{C_1=1}^{b_1-1}\dots\sum_{C_p=1}^{b_p-1} \sum_{i=1}^t C_{\tau(i)}
$$
Change the order of summation by first taking all sums over~$C_j$, and only then summing over~$i$. Then for a fixed permutation $\tau\in S_p$ and for a fixed index~$i$ in the last sum, all $j\in [1,\dots,p]\setminus \tau(i)$ do not matter, and thus just contribute an overall factor of $\prod_{j\ne \tau(i)}(b_j-1)$. For each such choice, the sum over~$C_{\tau(i)}$ gives $b_{\tau(i)}(b_{\tau(i)}-1)/2$, so that altogether for a fixed~$i$ we simply get $b_{\tau(i)}\cdot B/2$. Summing these contributions (still for a fixed~$t$ and fixed~$\tau\in S_p$) over all~$i$ then gives~$B/2$ times $\sum_{i=1}^t b_{\tau(i)}$. Thus, denoting the double sum over~$\tau$ and~$C_i$ for a fixed~$t$ by~$T_t$, we obtain the bound
$$
 T_t\le \frac{B}{4t}\sum_{\tau\in S_p}\sum_{i=1}^t b_{\tau(i)}=\frac{B}{4t}\cdot a\cdot t\cdot (p-1)!=\frac{(p-1)!aB}{4}\,,
$$
where we simply observed that there are~$p!$ different permutations~$\tau\in S_p$, for each of them we are summing~$t$ different~$b_j$'s, and thus each of the~$p$ summands~$b_j$ appears $p!\cdot t/p$ times.

Applying this bound for each of $T_1,\dots,T_p$ finally gives the upper bound (for $p\ge 3$)
\begin{equation}
 \chi\left(\overline{\cR}\right)\le\frac{p! B}{48}(A-a^2+8a+20)+\varepsilon+\varepsilon_0\,.
\end{equation}
Observe now that for a fixed~$a$, the sum of squares $A=\sum b_i^2$ is maximal when one of the~$b_i$ is as large as possible while others are as small as possible. Indeed, to prove this formally suppose~$b_p$ is the maximal one among the~$b_i$, and write $b_p=a-b_1-\dots-b_{p-1}$, so that then $\tfrac{\partial}{\partial b_i}\sum b_i^2=2(b_i-b_p)\le 0$ for all $i=1,\dots,p-1$, and thus decreasing~$b_i$ while increasing~$b_p$ by the same amount increases~$A$ while keeping~$a$ fixed.

Since the residueless strata only exist for each $b_i\ge 2$, this means that for fixed~$a$, and for any $p\ge 3$, the above upper bound for the Euler characteristic is maximal for
\begin{equation}\label{eq:boundp}
\begin{aligned}
 \chi&\left(\overline\cR_{1,1+p}(a,-2,\dots,-2,a-2p+2)\right)\\
 &\le\frac{p!\cdot (a-2p+1)}{48}\left(4p+(a-2p)^2-a^2+8a+20\right)+\varepsilon+\varepsilon_0\\
 &=
 \frac{p!\cdot (c+1)}{48}\left(20+20p-4p^2+c(8-4p)\right)+\varepsilon+\varepsilon_0
\end{aligned}
\end{equation}
where we have denoted $c\coloneqq a-2p\ge 0$. We now determine all the cases when this expression may be non-negative.

For $p\ge 6$, we see that $20+20p-4p^2<0$ and $8-4p<0$, so that this expression is always negative (recall that~$\varepsilon$ may only be non-zero for $p=3$). For $p=5$, $20+20p-4p^2=20$ while $8-4p=-12$, so that one only needs to check the cases with $c\le 1$, that is with $a=10$, in which case one computes $\chi(\overline{\cR}_{1,6}(10,-2,-2,-2,-2,-2))=50$, and $a=11$, in which case already $\chi(\overline{\cR}_{1,6}(11,-2,-2,-2,-2,-3)=-240$.

Similarly, for $p=4$ one has $20+20p-4p^2=36$ while $8-4p=-8$, so that one needs to check the cases of $0\le c\le 5$, and by enumerating them all and computing the values of the Euler characteristic using a computer one sees that $\chi(\overline{\cR}_{1,5}(8,-2,-2,-2,-2))=18$ is the only case with positive Euler characteristic.

Finally, for $p=3$ one takes into account the term $0\le\varepsilon+\varepsilon_0 \le 3$, so that $20+20p-4p^2+\varepsilon\le 47$, and $8-4p=-4$, so that one needs to write a computer script (which we did, in Maple, and in Python for a cross-check) to verify all possible cases for $c\le 12$, finding precisely the $p=3$ cases that appear in \eqref{eq:exceptionsLT}. We will return to the full list of cases for $p\ge 3$ with positive Euler characteristic after dealing with the case of $p=2$.

\smallskip
Finally, the case of $p=2$ is different, in particular because the corresponding residueless genus one strata may be orbifolds in this case. One of the main results of \cite{leetahar} is the formula for their orbifold Euler characteristic given by \cite[eq.~(1,1)]{leetahar}, but the relevant result for us is \cite[eq.~(6.10)]{leetahar} giving the manifold Euler characteristic:
\begin{equation}
    \chi\left(\overline{\cR}_{1,3}(a,-b_1,-b_2)\right)=\frac{p!B}{48}(A-a^2-4a+20)+T_1+T_2 +\varepsilon+\eta
\end{equation}
where $\varepsilon=\varepsilon_0-\varepsilon_1+\varepsilon_2$, $\varepsilon_0\in \{0,2,3\}$ by \cite[eq.~(6.2)]{leetahar}, $\varepsilon_1=\frac{a}2$ if~$b_1$ and~$b_2$ are both even, and zero else, by \cite[eq.~(6.4)]{leetahar}, $\varepsilon_2=\frac{a-1}2$ if~$b_1$ and~$b_2$ are both even, and zero otherwise, by \cite[eq.~(6.6)]{leetahar}, and $\eta\in \{0,\tfrac23\}$ by \cite[eq.~(6.7)]{leetahar}. Thus altogether $\epsilon_2-\epsilon_1\in\{0,-1/2\}$, and the total correction $\varepsilon+\eta$ is bounded within~$[-1/2,3]$. Finally,~$T_1,T_2$ are the same sums as before:
$$
 T_1=\frac12\sum_{\tau\in S_2}\sum_{c_1=1}^{b_1-1}\sum_{c_2=1}^{b_2-1} \gcd(c_{\tau(1)},b_{\tau(1)})
$$
$$
 T_2=\frac14\sum_{\tau\in S_2}\sum_{c_1=1}^{b_1-1}\sum_{c_2=1}^{b_2-1} \gcd(c_1+c_2,b_1+b_2)
$$
As before, by estimating $\gcd(c_1+c_2,b_1+b_2)\le c_1+c_2$, we obtain (as both permutations give the same contribution)
$$
\begin{aligned}
 T_2&\le \frac12 \sum_{c_1=1}^{b_1-1}\sum_{c_2=1}^{b_2-1}(c_1+c_2)\\ &=\frac12(b_2-1)\frac{(b_1-1)b_1}2+\frac12(b_1-1)\frac{(b_2-1)b_2}2=\frac{(b_1+b_2)(b_1-1)(b_2-1)}{4}
\end{aligned}
$$
by summing the two terms separately.

For~$T_1$, we could also estimate the~$\gcd$ similarly, but we prefer to give a sharper upper bound for this case. Indeed, observe that for any integer $N\ge 2$ the following bound holds:
$$
 \sum_{c=1}^{N-1}\gcd(c,N)\le \sum_{d|N,\, d<N} d\cdot \left(\frac{N}{d}-1\right)<\sum_{d|N,\, d<N} d\cdot \frac{N-1}d
 \le 2(N-1)\sqrt{N}
$$
where we simply noted that for any~$d$ there are $N/d-1$ integers between~$0$ and~$N-1$ that are divisible by~$d$ (and that bound is overcounting due to those integers that are divisible by a multiple of~$d$); we then observed that half of all divisors~$d$ of~$N$ are smaller than or equal to~$\sqrt{N}$. Of course this bound is not optimal (at least the factor of 2 can be removed), but it suffices for our purposes as we thus obtain 
$$
    T_1=\frac12\sum_{c_1=1}^{b_1-1}\sum_{c_2=1}^{b_2-1} \left(\gcd(c_1,b_1)+\gcd(c_2,b_2)\right)
    \le (b_1-1)(b_2-1)(\sqrt{b_1}+\sqrt{b_2})
$$
so that altogether we obtain the bound
\begin{equation}\label{eq:bdp=2}
\begin{aligned}
    \chi&\left(\overline{\cR}_{1,3}(a,-b_1,-b_2)\right)\le \\ -&\frac{(b_1-1)(b_2-1)}{12}(b_1 b_2+2b_1+2b_2+2)+\frac{(b_1+b_2)(b_1-1)(b_2-1)}4\\ &+(b_1-1)(b_2-1)(\sqrt{b_1}+\sqrt{b_2})+\varepsilon+\eta\,\\
    &=B\left(-\frac{b_1 b_2+2b_1+2b_2+2}{12}+\frac{b_1+b_2}{4}+\sqrt{b_1}+\sqrt{b_2} \right)+\varepsilon+\eta\\
    &=B\left(-\frac{B+1}{12}+\sqrt{b_1}+\sqrt{b_2}\right)+\varepsilon+\eta
\end{aligned}
\end{equation}
We now observe that for~$B$ fixed, the value of $\sqrt{b_1}+\sqrt{b_2}$ is maximized when $b_1=2$ (by expressing~$b_2$ in terms of~$b_1$ and~$B$ and computing the derivative, and recalling that in our convention $2\le b_1\le b_2$). Thus setting $b_1=2$ we obtain
$$
\chi\left(\overline{\cR}_{1,3}(a,-b_1,-b_2)\right)\le B\left(-\frac{B+1}{12}+\sqrt{2}+\sqrt{B+1}\right)+\varepsilon+\eta
$$
Observing that certainly this expression is negative for large~$B$ and computing the values of the main term (without the~$\varepsilon+\eta$), we see that for $B\ge 176$ the main term is~$\le -5$, so that the entire expression is negative. Thus it remains to find all values of~$b_1,b_2$ with $B=(b_1-1)(b_2-1)<176$, which we have implemented using Maple (and also in Python for a cross-check). All the cases where the Euler characteristic is strictly positive, for~$p=2$, are given in the following table. In the table we also give the number of connected components of these residueless strata as determined by \cite{lee}, as the sum of the numbers of non-hyperelliptic and hyperelliptic components. We recall that non-hyperelliptic components are classified by the rotation number~$r|\gcd(b_1,b_2)$, except that no non-hyperelliptic components exist for $(4,-2,-2)$ and that there is no non-hyperelliptic component of rotation number~$3$ for $(6,-3-3)$ nor of rotation number~$4$ for $(8,-4,-4)$ by \cite[Thm.~1.7]{lee}. The number of hyperelliptic components is equal to the number of ramification profiles, by \cite[Thm.~1.4]{lee}. In our case of one zero and $p=2$, there are in fact~$2$ hyperelliptic components if $b_1=b_2$ is even, so that the ramification profile can either fix or switch the poles; and there is a unique hyperelliptic component if $b_1<b_2$ is even, so that the ramification profile fixes both poles, or if $b_1=b_2$ is odd, so that the ramification profile swaps these poles.

We put a checkmark in the ``Ok" column if 
$$\chi\left(\overline{\cR}_{1,3}(b_1+b_2,-b_1,-b_2)\right)<2 h^0\left(\overline{\cR}_{1,3}(b_1+b_2,-b_1,-b_2)\right),$$ 
which indicates that there exists a connected component of this residueless stratum that has positive genus. 
$$
\begin{array}{|c|c|c|c|}
\hline
b_1+b_2,-b_1,-b_2&\chi\left(\overline{\cR}_{1,3}\right)&h^0\left(\overline{\cR}_{1,3}\right)&\hbox{Ok?}\\
\hline
4,-2,-2&4&2=0+2&\\
5,-2,-3&2&1=1+0&\\
6,-2,-4&6&3=2+1&\\
7,-2,-5&2&1=1+0&\\
8,-2,-6&6&3=2+1&\\
10,-2,-8&4&3=2+1&\checkmark\\
12,-2,-10&2&3=2+1&\checkmark\\
6,-3,-3&6&3=2+1&\star\\
7,-3,-4&2&1=1+0&\\
8,-3,-5&2&1=1+0&\\
9,-3,-6&2&2=2+0&\checkmark\\
8,-4,-4&8&4=2+2&\\
\hline
\end{array}
$$
Here the~$\star$ for $\overline{\cR}_{1,3}(6,-3,-3)$ indicates that as we were told by Myeongjae Lee there is a missing case in \cite{lee} of two different non-hyperelliptic components of rotation number $r=1$, in addition to the hyperelliptic component. This missing case is parallel but much simpler than the case of $\overline{\cR}_{1,5}(12,-3,-3,-3,-3)$ treated in \cite{leetahar}; recall furthermore that there is no component of rotation number $r=3$ in $\cR_{1,3}(6,-3,-3)$. In all the cases with~$\checkmark$ we are guaranteed to have at least one connected component of positive genus, which leaves the remaining $p=2$ cases as listed in the statement of the proposition.

\smallskip
We now return to the cases with $p\ge 3$ where the Euler characteristic was positive (the full list for $p=3$ and $p=4$ was found by a short computer evaluation of the formulas, as discussed above), to check if it is equal to double the number of connected components in each of these cases. 

For $\cR(6,-2,-2,-2)$ there exist no non-hyperelliptic components. Every ramification profile has to fix the unique zero. Thus there is one ramification profile that fixes all poles, and 3 more ramification profiles that swap one pair of poles, and the total number of connected components is thus equal to 4, and the Euler characteristic is computed to be 8. Thus all 4 connected components have genus zero.

For $\chi\left(\overline\cR(7,-2,-2,-3)\right)=2$ we notice that there there is a unique non-hyperelliptic component, and no hyperelliptic components (an odd order zero cannot be fixed). Thus the unique connected component is rational.

For $\chi\left(\overline\cR(8,-2,-3,-3)\right)=4$ there is a unique non-hyperelliptic component, and a unique ramification profiles swapping the two third order poles. Thus both connected components are rational.

For $\chi\left(\overline\cR(8,-2,-2,-4)\right)=8$ there are two non-hyperelliptic components with $r=1$ and $r=2$, and two hyperelliptic components depending on whether the ramification profile fixes or switches the double poles, and thus each of these 4 connected components is rational.

For $\chi\left(\overline\cR(8,-2,-2,-2,-2)\right)=18$, there are no non-hyperelliptic components. There are~$\binom{4}{2}=6$ hyperelliptic connected components where one pair of poles is swapped, and the remaining two poles and the zero are fixed, and 3 ramification profiles where two pairs of poles are swapped. Thus each of the 9 connected components is rational.

For $\chi\left(\overline\cR(10,-2,-2,-2,-2,-2)\right)=50$, there are no non-hyperelliptic components. There are~$\binom{5}{2}=10$ ramification profiles where one pair of poles is swapped, and the zero and the remaining 3 poles are fixes, and $15=5\cdot 3$ ramification profiles where the zero and one pole are fixed, and the remaining 4 poles are swapped in a choice of one of 3 partitions into pairs. Thus all connected components are rational.

\appendix
\section{Linear independence and extremality of irreducible components of boundary divisors}\label[appendix]{lin_indep}
In this appendix we prove the linear independence of irreducible component of boundary divisors in the holomorphic strata, as well as the extremality of irreducible components of the vertical boundary divisors, that is, we prove \Cref{prop:indep} and \Cref{prop:extreme}. These results were already known to some of us, and implicit in the computations done in \cite{comoza} and \cite{chennonvarying}. However, they have not appeared explicitly in the literature, and thus we give a complete proof for future use.
\begin{proof}[Proof of \Cref{prop:indep}]
First note that on a holomorphic stratum, any graph in~$\LG_{0,1}$ (i.e.~with only one level and one horizontal edge) must be a loop, as otherwise the residue theorem would be violated. Matching the classification of connected components of~$\cHgnm$ for a holomorphic~$\mu$, determined by \cite{kozo}, with the connected components of $\cH_{g-1,n}(\mu,-1,-1)$, determined by \cite{boissy}, then shows that the boundary stratum~$D_h$ corresponding to the loop dual graph is irreducible (This is the proof given for Claim B in \cite{dogr}, which we thus recalled).

Suppose now that we have a linear relation among the boundary divisors of~$\ocH$
\begin{align}\label{equation: boundary relation}
  a_hD_h+\sum_{\Gamma^+\in \LG_{1,0}}\sum_{D_\Gamma}a_{D_\Gamma}D_\Gamma=0,
\end{align}
where we sum over all graphs~$\Gamma^+$ with two levels and no horizontal edges, and over all irreducible components~$D_\Gamma$ of the corresponding  boundary divisor, and $a_h,a_{D_\Gamma}\in\bQ$. We will show that each coefficient in \eqref{equation: boundary relation} has to be zero, by intersecting with various curve classes.

\subsubsection*{Horizontal boundary divisors} 
Let~$C_0$ be the closure in~$\ocH$ of a Teichm\"ul\-ler curve. It was proven in \cite[\S~3]{chenmoller_sumlyap} that a Teichm\"uller curve is never compact and its closure~$C_0$ only intersects the boundary at the points contained in~$D_h$, and not in any vertical boundary divisor. Consequently, $D_\Gamma\cdot C_0=0$ for all irreducible components of the vertical boundary, whereas $D_h\cdot C_0\neq 0$. In particular, from \eqref{equation: boundary relation}, we see that~$a_h=0$.

\subsubsection*{Vertical boundary divisors with unique vertex at the top level} Suppose $\Gamma^+\in \LG_{1,0}$ has a unique vertex~$v$ at level~$0$. Following the notation of \cite{comoza}, for a given~$D_\Gamma$ we write $B_\Gamma=B_\Gamma^\top\times B_\Gamma^\bot$ for the decomposition up to commensurability in the levelwise generalized strata, where in this case~$B_\Gamma^\top$ is itself the connected component~$\cH'$ of a stratum~$\cH_{g',n'}(\mu')$ of holomorphic differentials on the (connected) Riemann surface~$X_v$.

If $g(X_v)\ge 2$, then $B_\Gamma^\top=\cH'$ contains a Teichm\"uller curve~$C$, which again only intersects non-trivially the horizontal boundary divisor $D_h'\subset\cH'$. We denote by $C_\Gamma\coloneqq c_\Gamma\circ p_\Gamma^{-1}(C)\subset D_\Gamma$ the corresponding curve in the boundary divisor. It follows that~$C_\Gamma$ only intersects the boundary divisors~$D_h$ and~$D_\Gamma$ non-trivially. (Here we use again that by the results of \cite[\S~5]{comoza}, the different irreducible components~$D_\Gamma$ of the boundary divisor with a fixed~$\Gamma^+$ are pairwise disjoint.)

The normal bundle of~$D_\Gamma$ is given by \eqref{eq:normal_bundle}. This formula implies 
$$c_1(N_\Gamma)\cdot C_\Gamma = -\frac{\kappa_e}{\ell_\Gamma}\psi_{e^+}\cdot C_\Gamma$$
since~$C_\Gamma$ intersects vertical boundary divisors trivially, and since more\-over
$\psi_{e^-}\cdot C_\Gamma=0$ because~$C_\Gamma$ is contained in~$B_\Gamma^\top$ whereas~$\psi_{e^-}$ is a class pulled back from~$B_\Gamma^\bot$. Then by \cite[\S 6, eq. (39)]{chcomo}, modulo the vertical boundary divisors in~$B_\Gamma^\top$, the intersection number $\psi_{e^+}\cdot C_\Gamma$ on the right hand side is equal to a positive constant times $(12\lambda-D_h')\cdot C_\Gamma$. As computed in \cite[Prop.~4.5]{chenmoller_sumlyap}, the ratio $D_h'\cdot C_\Gamma/\lambda\cdot C_\Gamma$ is strictly less than 12, and thus $$\psi_{e^+}\cdot C_\Gamma=\text{(const)}\cdot(12\lambda-D_h')\cdot C_\Gamma>0,$$ so that $D_\Gamma\cdot C_\Gamma<0$. Since~$C_\Gamma$ intersects only~$D_h$ (and its intersection with~$D_h$ is the same as with~$D_h'$) and~$D_\Gamma$ at the boundary, from \eqref{equation: boundary relation}, and using the already proven~$a_h=0$, we see that~$a_{D_\Gamma}=0$.

We next deal with the case that $g(X_v)=1$ (notice that since~$\mu'$ is holomorphic, it cannot happen that $g(X_v)=0$). Then~$B_\Gamma^\top$ is a finite quotient of the stratum~$\ocH_{1,n'}(0,\dots,0)$, and thus is simply birational to a finite quotient of~$\overline\cM_{1,n'}$. 

We proceed by induction on~$n'$. If $n'=1$, then $B_\Gamma^\top\cong\overline\cM_{1,1}$ is a curve, and we consider its image as a curve $C_\Gamma\subset D_\Gamma$. Note that~$D_\Gamma$ is contained in the preimage of $\delta_{1,\emptyset}\subset \overline\cM_{g,n}$ in~$\ocHgnm$, and~$C_\Gamma$ pushes forward to a non-trivial curve in~$\overline\cM_{g,n}$ that intersects~$\delta_{1,\emptyset}$ negatively. Consequently, $C_\Gamma\cdot D_{\Gamma,i}<0$. Additionally, the curve~$C_\Gamma$ only intersects~$D_h$ but no other boundary divisor, so, as before, we obtain~$a_{D_\Gamma}=0$ in \eqref{equation: boundary relation}.

For $n'\geq 2$ consider the curve $C_\Gamma\subset D_\Gamma$ obtained by varying a marked point along a fixed elliptic curve in~$\overline\cM_{1,n'}$. By \cite[Ch.~13, Prop.~3.10]{acg2} and \cite[Lem.~6.4]{chcomo}, the  intersection number $\ell_\Gamma\left(\sum_{e\in E(\Gamma)}1/p_e\right)C_\Gamma\cdot D_{\Gamma,i}$ is equal to minus the sum of the degrees of~$\psi$ classes on the underlying curves in~$\overline\cM_{1,n'}$, and thus $C_\Gamma\cdot D_\Gamma<0$. On the other hand, the only boundary divisors that~$C_\Gamma$ intersects positively are some~$D_{\Gamma'}$ where~$\Gamma'$ also has unique top level vertex of genus~$1$ with valence~$n'-1$, and by induction, $a_{D_{\Gamma'}}=0$ for all such irreducible components. Consequently, we see that~$a_{D_\Gamma}$ also has to be~$0$ in \eqref{equation: boundary relation}.

\subsubsection*{Vertical boundary divisors with multiple vertices at the top} 
If~$\Gamma$ is an enhanced level graph with $r\ge 2$ vertices at the top level, then~$D_\Gamma$ is necessarily an exceptional divisor over the Deligne-Mumford compactification. Note that multi-scale differentials in $C\subset B_\Gamma^\top$ can only degenerate the level structure of the top level of~$\Gamma$ while keeping the underlying dual graph fixed. Thus if~$C$ is a generic curve in~$B^\top_\Gamma$ such that $C_\Gamma\coloneqq c_\Gamma\circ p_\Gamma^{-1}(C)$ gets contracted under the morphism $\pi:\ocH\rightarrow\overline\cM_{g,n}$, then~$C_\Gamma$ can only intersect another boundary divisor~$D_{\Gamma'}$ if~$\Gamma'$ has at most~$r-1$ vertices at the top level. Additionally, for any boundary divisor $\delta\subset\overline\cM_{g,n}$ the intersection  number $\pi^*\delta\cdot C_\Gamma=0$, so by \cite[Lem.~6.4]{chcomo}$D_\Gamma\cdot C_\Gamma<0$, and again by induction in~$r$ we conclude that~$a_{D_\Gamma}=0$.

Thus we have shown that the coefficients of all divisors in \eqref{equation: boundary relation} are zero, and thus the only linear relation among them is trivial.
\end{proof}

We remark that the fourth-named author of the  appendix, joint with Matteo Costantini and Jonathan Zachhuber, had an unpublished alternative argument to show the linear independence of the boundary divisors of the holomorphic strata, in a slightly weaker sense that does not distinguish irreducible components of a vertical boundary divisor with the same level graph. Their method was to intersect the boundary divisors with curve classes similar to Kontsevich's $\beta$-class appearing in the computations of dynamical invariants of the strata (see \cite{Kont97, CMSZ-volume}). Heuristically speaking, the $\beta$-class numerically represents the ``limit'' of Teichm\"uller curves. Therefore, this method essentially goes along the same circle of ideas as using Teichm\"uller curves. In order to not  make the paper longer, here we skip the details of this alternative argument.

We now prove the extremality of the vertical boundary divisors in the effective cone of a connected component of a holomorphic stratum. We remark that the study of effective cones and their extremal rays plays an important role for understanding the birational geometry of moduli spaces, see  \cite{CFMEffective, ChenExtremal} for an introduction to this circle of ideas as applied to moduli spaces.

\begin{proof}[Proof of \Cref{prop:extreme}]
We continue to use the same notation as in the proof of \Cref{prop:indep}, so that in particular~$D_\Gamma$ is an irreducible component of the vertical boundary divisor in a connected component~$\ocH$ of a holomorphic stratum~$\ocHgnm$. 

If~$\Gamma$ has a unique vertex at level~$0$, then $D_\Gamma\cdot C_\Gamma < 0$ for all Teichm\"uller curves~$C_\Gamma$ contained in the top level. Indeed, from \cite[Section 4]{chenmoller_sumlyap}, the ratio $(D_\Gamma\cdot C_\Gamma)/(D_\Gamma\cdot \lambda)$ is a negative constant which is determined by~$\Gamma$ and independent of each individual ~$C_\Gamma$. Since the union of Teichm\"uller curves is Zariski dense in every holomorphic stratum, the extremality of~$D_\Gamma$ then follows from the same argument as in \cite[Section 4]{GheorghitaHodge} and \cite[Section 5]{chennonvarying}. 

If~$\Gamma$ has at least two vertices on level~$0$, then~$C_\Gamma$ used in the proof of \Cref{prop:indep} is a moving curve in~$D_\Gamma$, and satisfies $D_\Gamma\cdot C_\Gamma < 0$. Then the extremality of~$D_\Gamma$ follows from \cite[Lemma 4.1]{chencoskun}. Alternatively, one observes that~$D_\Gamma$ is an irreducible exceptional divisor under the birational contraction from~$\ocH$ to its image in~$\overline{\cM}_{g,n}$, which also implies the extremality of~$D_\Gamma$ (see \cite[Section 1.5]{RullaThesis}). 
\end{proof}

Finally, we remark that the above proof of extremality does not apply to the (irreducible) horizontal boundary divisor~$D_h$ in a holomorphic stratum. The issue is that a generic element in~$D_h$ corresponds to a differential in the meromorphic stratum $\cH(\mu, -1, -1)$, where the two simple poles occur at the two branches of the horizontal node after normalization. However, analogues of Teichm\"uller curves in strata of meromorphic differentials are not known in general. Thus determining whether~$D_h$ is extremal in the pseudo-effective cone of a connected component of any holomorphic stratum remains an interesting open question. Determining which irreducible boundary divisors of which meromorphic strata are extremal is also an interesting open question.

\bibliography{contraction}
\bibliographystyle{plain}

\end{document}